\newcommand{\calL}{\mathcal{L}}
\newcommand{\calO}{\mathcal{O}}
\newcommand{\calP}{\mathcal{P}}
\newcommand{\calC}{\mathcal{C}}
\newcommand{\calH}{\mathcal{H}}
\newcommand{\bbP}{\mathbb{P}}
\newcommand{\bbZ}{\mathbb{Z}}
\newcommand{\sfE}{\mathsf{E}}
\newcommand{\bbF}{\mathbb{F}}
\newcommand{\half}{\frac{1}{2}}
\newcommand{\frakS}{\mathfrak{S}}
\newcommand{\frakA}{\mathfrak{A}}
\newcommand{\SU}{\operatorname{SU}}
\newcommand{\U}{\operatorname{U}}
\newcommand{\Or}{\mathrm{O}}
\newcommand{\PSU}{\operatorname{PSU}}
\newcommand{\SO}{\operatorname{SO}}
\newcommand{\PSp}{\operatorname{Sp}}
\newcommand{\PSL}{\operatorname{PSL}}
\newcommand{\PGL}{\operatorname{PGL}}
\newcommand{\la}{\langle}
\newcommand{\ra}{\rangle}
\newcommand{\fppf}{\textrm{fppf}}
\newcommand{\et}{\textrm{\'et}}
\newcommand{\red}{\rm{red}}
\newcommand{\beq}{\begin{equation}}
\newcommand{\eeq}{\end{equation}}
\renewcommand{\arraystretch}{1.15}
\theoremstyle{plain}
\newtheorem*{theorem*}{Theorem}
\author{Igor Dolgachev}
\address{\hfill \newline 
Department of Mathematics \newline
University of Michigan \newline
525 East University Avenue \newline
Ann Arbor,
 MI 48109-1109 USA}
\email{idolga@umich.edu}
\author{Gebhard Martin}
\address{\hfill \newline
Mathematisches Institut  \newline
Universit\"at Bonn \newline
Endenicher Allee 60 \newline
53115 Bonn \newline
Germany}
\email{gmartin@math.uni-bonn.de}
\begin{document}
\title[Automorphisms of del Pezzo surfaces in characteristic 2]{Automorphisms of del Pezzo surfaces in characteristic 2}

\date{}
\begin{abstract} We classify the automorphism groups of del Pezzo surfaces of 
degrees one and two over an algebraically closed field of characteristic two. 
This finishes the classification of automorphism groups of del Pezzo surfaces 
in all characteristics.
\end{abstract}

\maketitle

 \tableofcontents
\section*{Introduction} 
This is a continuation of our paper \cite{DM}, where we 
finished the classification of the automorphism groups of del Pezzo surfaces over an algebraically closed field 
of positive characteristic $p\ne 2$. In this paper we treat the remaining case when the characteristic is equal to $2$.

As we explained in the introduction to \cite{DM}, the remaining part of the classification concerns del 
Pezzo surfaces of degree one and two. The cases of odd and even positive characteristic are drastically different since 
in the latter case the anti-canonical map (resp. the anti-bicanonical map) is a separable Artin-Schreier cover  
of degree two 
but not
a Kummer cover as in the cases of odd characteristic. So, no plane quartic curves (and no canonical genus $4$ curves with 
vanishing theta characteristic) appear as branch curves.

Instead, in characteristic $2$, the branch curve $B$ of the anti-canonical (resp. anti-bicanonical) map is a not necessarily smooth plane conic (resp. a cubic in $\mathbb{P}^3$). The ramification curve $R$ is a purely inseparable cover of $B$. In Theorems \ref{canform} and \ref{thm: canform}, we give normal forms for del Pezzo surfaces of degree $2$ and $1$ depending on the singularities of $R$ and $B$.

Although plane quartics and canonical curves of genus four disappear in characteristic $2$, their 
familiar attributes like $28$ bitangent lines or $120$ tritangent planes persist. We call them \emph{fake bitagents} and \emph{fake tritangent planes}. 
They are defined to be lines in the plane (resp. planes in the three-dimensional space) which split under the anti-canonical (resp. anti-bicanonical) map.

It is well-known that the blow-up of the anti-canonical base point on a del Pezzo surface of degree $1$ yields a rational elliptic surface with only irreducible fibers and, conversely, the contraction of a section of a rational elliptic surface with only irreducible fibers yields a del Pezzo surface of degree $1$. Thus, the normal forms of Theorem \ref{thm: canform} also give normal forms for all rational elliptic surfaces with only irreducible fibers.

Quite surprisingly, in characteristic $2$, also every del Pezzo surface of degree $2$ has a canonically associated rational elliptic surface. This surface is obtained by blowing up the base points of the preimage of the pencil of lines through the \emph{strange point} of the branch locus $B$. We study the properties of this \emph{strange fibration} in Section \ref{sec: strangefibration}.

Using these geometric observations, we classify the automorphism groups of all del Pezzo surfaces of degree $2$ and $1$ in characteristic $2$.
The following result is proved in Theorems \ref{thm: main2} and \ref{thm: main1}.

 \begin{theorem*}
A finite group $G$ is realized as the automorphism group $\Aut(X)$ of a del Pezzo surface $X$ of degree $2$ (resp. $1$) over an algebraically closed field $k$ of characteristic ${\rm char}(k)= 2$ if and only if $G$ is listed in Table \ref{tbl:autodp2} (resp. Table \ref{tbl:autodp1}) in the Appendix.
 \end{theorem*}

Table \ref{tbl:autodp2} (resp. Table \ref{tbl:autodp1}) also gives the conjugacy classes in $W(E_7)$ (resp. $W(E_8)$) of all elements of $\Aut(X)$ for all del Pezzo surfaces $X$ of degree $2$ (resp. degree $1$). 
We refer to \cite{DM} for a general discussion of the history of 
the problem and its relationship to the classification of 
conjugacy classes of finite subgroups of the 
planar Cremona group. Also the reader finds there some general facts about del Pezzo surfaces, e.g. the relationship with the Weyl 
groups of roots systems and some classification results from the group theory.

\newpage

\section{Notation}

For the convenience of the reader, we recall the notations for some finite groups which we will 
encounter in this article. Throughout this article, $p$ is a prime and $q$ is a power of $p$. 
Unless stated otherwise, $\Bbbk$ denotes an algebraically closed field of characteristic $2$.

\begin{itemize}
\item $C_n$ is the cyclic group of order $n$.
\item $\frakS_n$ and $\frakA_n$ are the symmetric and alternating groups on $n$ letters.
\item $Q_8$ is the quaternion group of order 8.
\item $D_{2n}$ is the dihedral group of order $2n$.
\item $n^k = (\bbZ/n\bbZ)^k$. In particular, $n = n^1 = \bbZ/n\bbZ$.
\item $p_{\pm}^{1+2n}$, the extra special group. For odd $p$ the sign $+$ ($-$) defines a group of exponent 
$p$ ($p^2$). For $p = 2$, the sign distinguishes the type of the quadratic forms on $2^{2n} = \bbF_2^{2n}$ 
defined by the extension.
\item $\GL_n(q) = \GL(n,\bbF_q)$.
\item $\PGL_n(q) = \GL_n(q)/\bbF_q^*$. Its order is $N = q^{\half n(n-1)}(q^n-1) \cdots (q^2-1)$. 
\item $\SL_n(q) = \{g\in \GL_n(q):\det(g) = 1\}$. This is a subgroup of $\GL_n(q)$ of index $(q-1)$.
\item ${\rm L}_n(q) =\PSL_n(q)$ is the image of $\SL_n(q)$ in $\PGL_n(q)$. Its order is $N/(q-1,n)$. 
\item For odd $n$, $\Or_{n}(q)$ is the subgroup of $\GL_n(q)$ that preserves a non-degenerate quadratic form $F$.
\item For even $n$, $\Or_n^+(q)$ (resp. $\Or_n^-(q)$) is the subgroup of $\GL_n(q)$ that preserves a non-degenerate quadratic form $F$ of Witt defect $0$ (resp. $1$).
\item $\SO_{n}^{\pm}(q)$ is the subgroup of $\Or_{n}^{\pm}(q)$ of elements with determinant $1$. 
\item ${\rm PSO}_n^{\pm}(q)$ is the quotient of $\SO_{n}^{\pm}(q)$ by its center.
\item $\Sp_{2n}(q)$ is the subgroup of $\SL_q(2n)$ preserving the standard symplectic form on $\bbF_q^{2n}$.
Its order 
is $q^{n^2}(q^{2n-1}-1)\cdots (q^2-1)$.
\item $\PSp_{2n}(q) = \Sp_{2n}(q)/(\pm 1)$.
\item $\SU_n(q^2)$ is the subgroup of $\SL_n(q^2)$ of matrices preserving the hermitian form 
$\sum_{i=1}^nx_i^{q+1}$. 
Its order is 
$q^{\half n(n-1)}(q^n-(-1)^n)(q^{n-1}-(-1)^{n-1})\cdots (q^3+1)(q^2-1)$.
 We have $\SU_2(q^2) = \SL_2(q).$
\item $\PSU_n(q^2) = \SU_n(q^2)/C, $ where $C$ is a cyclic group of order $(q+1,n)$ of diagonal Hermitian matrices.
The simple group $\PSU_n(q^2)$ is denoted by 
$\U_n(q)$ in \cite{Atlas}.
\item $\mathcal{H}_3(3)$ is the Heisenberg group of $3\times 3$ upper triangular matrices with entries in $\bbF_3$.
\item $A.B$ is a group that contains a normal subgroup $A$ with quotient group $B$.
\item $A:B$ is the semi-direct product $A\rtimes B$.
 \end{itemize} 

\newpage

\section{Del Pezzo surfaces of degree $\ge 3$}
For the convenience of the reader, we first recall the classification of automorphism groups of del Pezzo surfaces of degree at least $3$.

\subsection{Degree $\ge 5$} 
For del Pezzo surfaces of degree at least $5$, the description of $\Aut(X)$ is characteristic-free. We refer the reader to \cite[Section 3]{DM} or \cite{CAG} for details.

\subsection{Quartic del Pezzo surfaces}
Starting from degree $4$, the classification of automorphism groups depends on the characteristic. 
As in the other characteristics, a quartic del Pezzo surface $X$ is a blow-up of $5$ points in $\mathbb{P}^2$ no $3$ of which are collinear. Moreover, the anti-canonical linear system 
$|-K_X| = |\calO_{\bbP^2}(3)-p_1-p_2-p_3-p_4-p_5|$ embeds $X$ into $\bbP^4$ as a complete intersection of 
two quadrics.

Since $p = 2$, these quadrics cannot be diagonalized. Instead, as shown in \cite{DD}, one can choose the normal forms
\begin{equation} \label{quartcdP} (ab+b+1)t_2^2+at_3^2+t_2t_3+t_3t_4 =
bt_1^2+(ab+a+1)t_2^2+t_1t_3+t_2t_4 = 0,
\end{equation}
where $a,b$ are parameters such that the binary form $\Delta = uv(u+v)(u+av)(bu+v)$ has five distinct roots.
 
As in the case $p \neq 2$, the automorphism group $\Aut(X)$ contains a normal subgroup $H$ isomorphic to $2^4$, and the quotient $G = \Aut(X)/H$ is isomorphic to a subgroup of $\frakS_5$. 
The classification is summarized in Table \ref{tbl:autodp4} in the Appendix. There, the first column refers to the values of the parameters $a$ and $b$ in Equation \eqref{quartcdP} above. The conjugacy classes of elements of $\Aut(X)$ can be obtained by combining \cite[Table 2]{DD} and \cite[Table 5]{Carter}. 

\subsection{Cubic surfaces} \label{S:3.3}
The classification of automorphism groups of cubic surfaces in characteristic $2$ was achieved in \cite[Table 7]{DD}. For the convenience of the reader, we recall the classification in Table \ref{tbl:cubics} in the Appendix.

\section{Del Pezzo surfaces of degree $2$} \label{sec: deg2}
\subsection{The anti-canonical map} \label{SS: 2.1}
We start by describing the geometry of del Pezzo surfaces of degree $d = 2$ over an algebraically closed field $\Bbbk$ of 
characteristic $p = 2$. We refer to \cite{Demazure} for the basic facts from the theory of 
del Pezzo surfaces over fields of any characteristic. It is known that the anti-canonical 
linear system $|-K_X|$ has no base points and defines a finite morphism $f:X\to \bbP^2$ of degree 2.

If $p\ne 2$, the map $f$ is 
automatically separable and its branch curve is a smooth plane quartic. So any automorphism of $X$ induces an 
automorphism of the quartic, and, conversely, any automorphism of the quartic can be lifted to two automorphisms of $X$
that differ by the deck transformation, classically called the Geiser involution. 

If $p = 2$, the structure of $f$, being a morphism of degree $2$, is more complicated. 
Nevertheless, as a first step, we observe that $f$ is still always separable.

\begin{proposition} \label{prop: separable}
The anti-canonical linear system $|-K_X|$ defines a finite separable morphism $f: X \to \mathbb{P}^2$ of degree $2$.
\end{proposition}

\begin{proof}
Assume that $f$ is not separable. Then, since ${\rm deg}(f) = 2$, $f$ is purely inseparable. Hence, 
$f$ is a homeomorphism in the \'etale topology, which is absurd since 
$H_{\et}^2(X,\mathbb{Z}_{\ell})$ has rank $8$ (because $X$ is the blow-up of 
7 points in the plane), 
while $H_{\et}^2(\mathbb{P}^2,\mathbb{Z}_{\ell})$ has rank $1$.
\end{proof}

Let 
$$R(X,-K_X) = \bigoplus_{n=0}^\infty H^0(X,\calO_X(-nK_X))$$
be the graded anti-canonical ring of $X$. By the Riemann-Roch Theorem, $\dim_\Bbbk R(X,-K_X)_1 = 3$ and 
$\dim_\Bbbk R(X,-K_X)_2 = 7$.  One can show that $R(X,-K_X)$ is generated by  $R(X,-K_X)_1$ and one element
from $R(X,-K_X)_2$ that does not belong to the symmetric square of $R(X,-K_X)_1$. Let $x,y,z$ be elements of 
$R(X,-K_X)_1$ and $w\in R(X,-K_X)_2$ which together generate $R(X,-K_X)$. Then, the relation between the generators 
is of the form
\beq\label{equation}
w^2 + A(x,y,z)w + B(x,y,z) = 0,
\eeq
where $A$ and $B$ are homogeneous forms of degree $2$ and $4$, respectively. 
In particular, via equation \eqref{equation}, we can view $X$ as a surface of degree $4$ in 
the weighted projective space $\mathbb{P}(1,1,1,2)$, and the anti-canonical map is the projection of this surface onto the $x,y,z$-coordinates.

If $p\ne 2$, we can complete the square, get rid of $A$, and obtain the standard equation of a del Pezzo surface of degree
$2$. The curve $V(B(x,y,z))$ is the smooth plane quartic we mentioned in the introduction. 
The Geiser involution just negates $w$. 

In our case, when $p = 2$, we cannot get rid of $A$, for otherwise the map would become inseparable. Also, the coefficient $B$ is not uniquely determined, since replacing $w$ with $w+Q$ for any quadratic form $Q$ changes $B$ to $B+AQ+Q^2$, without changing the isomorphism class of the surface. Taking $Q= A$, we obtain the analog of the 
 Geiser involution, so we keep the name for this involution.

The non-uniqueness of $B$ becomes more natural if we take the following different point of view:
By \cite[Proposition 1.11]{Ekedahl}, the double cover $f$ is a torsor under a group scheme $\alpha_{\calL,s}$ of order 2 over $\bbP^2$, defined by the exact sequence of fppf-sheaves 
 $$0\to \alpha_{\calL,s} \to \calL \overset{\phi}{\to} \calL^{\otimes 2} \to 0$$
for some line bundle $\calL$ and a global section $s$. The homomorphism of sheaves 
$\phi$ is locally given by $a\mapsto a_U^2+a_Us_U$, so $s$ cuts out the branch locus of $f$. By \cite[Proposition 1.7]{Ekedahl}, 
we have $\omega_X \cong f^*(\mathcal{O}_{\bbP^2}(-3) \otimes \calL^{-1})$, 
hence $\calL \cong \mathcal{O}_{\bbP^2}(2)$ and $s = A$.  The $\alpha_{\calL,s}$-torsor
corresponding to $f$ is defined by a cohomology class 
in $H^1_{\fppf}(\bbP^2,\alpha_{\calL,s})$. Since $H^1_{\fppf}(\bbP^2,\calL) = H^1(\bbP^2,\calL) = 0$, we have 
$$H_{\fppf}^1(\bbP^2,\alpha_{\calL,s}) \cong H^0(\bbP^2,\calL^{\otimes 2})/\wp(H^0(\bbP^2,\calL)),
$$
where $\wp = H^0(\phi)$. The ternary form $B$ is a representative of this space, and hence it 
is defined only up to a transformation of the form $B\mapsto B+Q^2+AQ$, where $Q$ is a quadratic form in $x,y,z$.

By writing the equation of $X$ locally as 
$w_U^2+a_Uw_U+b_U$, and taking partial derivatives, we see that the differentials
$w_Uda_U+db_U$ restricted to $V(A)$ glue together to define a global section 
$\alpha$ of $\Omega_{\bbP^2}^1\otimes \calL^{\otimes 2}\otimes \calO_{V(A)}$. This
section vanishes  if and only if $X$ is singular. 
So, in our case, when $X$ is assumed to be smooth, we obtain the following.

\begin{proposition}\label{smooth} In \eqref{equation}, the equations
$$A = 0,\quad wA_x+B_x = 0,\quad wA_y+B_y = 0, \quad wA_z+B_z = 0$$
have no common solutions.

\end{proposition}


\subsection{Normal forms}
Recall that $X$ is given by an equation of the form
$$
w^2 + Aw + B = 0,
$$
where $A$ is a quadratic ternary form and $B$ is quartic ternary form. 
We say that $V(A)$ is the branch curve of the cover, 
and its preimage $R = f^{-1}(V(A))$ under the anti-canonical map $f:X\to \bbP^2$ 
will be called the \emph{ramification curve}. 

\begin{remark}
We use the notation $A_{2n}$ for singularities of curves whose formal completion is 
isomorphic to the unibranched singularity $y^2+x^{2n+1} = 0$. If $n = 1$, this is an ordinary cusp singularity. These are exactly the curve singularities that can occur on reduced purely inseparable double covers of smooth curves in characteristic $2$. Indeed, complete locally, such a double cover is given by an equation of the form
$
y^2 + ux^m,
$
where $u \in k[[x]]$ is a unit. Now, we can apply a substitution of the form $y \mapsto y + f$ for a suitable power series $f$ to assume that $m$ is odd and then replace $x$ by $\lambda x$, where $\lambda$ is an $m$-th root of $u^{-1}$, which exists by Hensel's lemma. In other words, the singularity defined by $y^2 + ux^m$ is of type $A_{2n}$, where $2n+1$ is the smallest odd power of $x$ that occurs in $ux^m$.
\end{remark}

The following theorem gives normal forms for the cover $f: X \to \bbP^2$. 
In total, we obtain six normal forms, corresponding to the six possible combinations of singularities of $V(A)$ and $R$.

\begin{theorem}\label{canform} Every del Pezzo surface of degree $2$ in characteristic $2$ is a quartic surface in $\mathbb{P}(1,1,1,2)$ given by an equation of the form
$$
w^2 + A(x,y,z)w + B(x,y,z),
$$
where $(A,B)$ is one of the following:
 $$
  \resizebox{\textwidth}{!}{
  $
    \begin{array}{|c|c|c|c|c|c|} \hline
    \text{Name}  & A & B & B_1 & B_0 & \# \text{Parameters}  \\ \hline \hline
    (1) (a)  & x^2+yz & xB_1 + B_0 & \lambda yz(y+z) & ay^4 + by^3z + cy^2z^2 + dyz^3 + ez^4 & 6\\ \hline
    (1) (b) & x^2+yz & xB_1 + B_0 & y^2z & ay^4 + by^3z + cy^2z^2 + dyz^3 + ez^4 & 5\\ \hline
   (1) (c) & x^2 + yz & xB_1 + B_0 & y^3 &  by^3z + cy^2z^2 + dyz^3 + ez^4 & 4 \\ \hline
    (2) (a) & xy & B_1 + B_0^2 & xz^3 + yz^3 & ax^2 + by^2 + cz^2 + dxz + eyz & 5  \\ \hline
    (2) (b) & xy & B_1 + B_0^2 & xz^3 + y^3z & ax^2 + cz^2 + dxz + eyz & 4 \\ \hline
    (3) \phantom{ (c) } & x^2 & xB_1 + B_0 & z^3 + ayz^2 & y^3z + by^2z^2 + cz^4 & 3 \\ \hline
    \end{array}
    $
    }
    $$
The parameters satisfy the following conditions:
\begin{enumerate}
    \item[(1) (a)] $\lambda \neq 0, \lambda^2 + a + b + c + d + e \neq 0, b^2 + a \neq 0, d^2 + e \neq 0$.
    \item[(1) (b)] $b^2 + a \neq 0, d^2 + e \neq 0$.
    \item[(1) (c)] $d^2 + e \neq 0$.
    \item[(2) (a)] $a \neq 0,b \neq 0$.
    \item[(2) (b)] $a \neq 0$.
    \item[(3) \phantom{( ) }] None.
\end{enumerate}
In terms of these normal forms, the singularities of the irreducible components of $R_{\red}$ are as follows:
\begin{enumerate}
    \item[(1) (a)] Three $A_2$-singularities, over $[0:1:0]$, $[0:0:1]$ and $[1:1:1]$.
    \item[(1) (b)] An $A_4$-singularity over $[0:0:1]$ and an $A_2$-singularity over $[0:1:0]$.
    \item[(1) (c)] An $A_6$-singularity over $[0:0:1]$.
    \item[(2) (a)] Two $A_2$-singularities, over $[1:0:0]$ and $[0:1:0]$.
    \item[(2) (b)] Two $A_2$-singularities, over $[1:0:0]$ and $[0:0:1]$.
    \item[(3) \phantom{( ) }] An $A_2$-singularity over $[0:0:1]$.
\end{enumerate}

\end{theorem}

\begin{proof} Since $f: X \to \bbP^2$ is separable, $A$ is non-zero. Hence, up to projective equivalence, there are three possibilities for $A$, corresponding to $(1)$, $(2)$, and $(3)$. Now, we study those cases separately. The conditions on the parameters will follow from Proposition \ref{smooth} by computing partial derivatives, a task which we will leave to the reader.

\smallskip
(1) $A = x^2 + yz$
\smallskip

Applying a substitution of the form $w \mapsto w + Q$ for a suitable quadratic form $Q$ allows us to assume that $B = xB_1 + B_0$ for homogeneous forms $B_0$ and $B_1$ in $y$ and $z$.

Let $x= uv, y = u^2, z= v^2$ define the Veronese 
isomorphism between $V(A)$ and $\bbP^1$. Substituting in $B$, we get that $R$ is isomorphic to the double cover 
of $\bbP^1$ given by the equation 
$$w^2+ uvB_1(u^2,v^2) + B_0(u^2,v^2) = 0.$$
By taking the partials, we find that $R$ is singular exactly over the roots of $B_1$.

After applying a suitable substitution that preserves $A$, we can move these roots to special positions. Note that the substitution $w \mapsto w + Q$ of the first paragraph does not change the position of these singularities, so we can still assume that $B = xB_1 + B_0$.

If the roots are distinct, we get Case (a), if there are two distinct roots, we get Case (b), and if there is only a single root, we get Case (c). Note that in Cases (b) and (c), the substitution $y \mapsto \lambda y, z \mapsto \lambda^{-1}z$ preserves the location of the roots and scales $B_1$, which is why we can assume that $xB_1$ occurs with coefficient $1$. Finally, in Case (c), we can apply a substitution of the form $z \mapsto z + \lambda^2 y, x \mapsto x + \lambda y$ for a suitable $\lambda$ to assume that $B_0(1,0) = 0$.

\smallskip
(2) $A = xy$
\smallskip

After applying a substitution of the form $w \mapsto w + Q$ for a suitable quadratic form $Q$, we may assume that $B$ does not contain monomials divisible by $xy$.  This allows us to write 
$$B = (a_1x^3+a_2y^3)z+(a_3x+a_4y)z^3+B_0(x,y,z)^2.
$$
Note that the preimages $R_1$ and $R_2$ of $V(x)$ and $V(y)$ on $X$ are members of $|-K_X|$, hence they must be reduced.

Restricted to $V(x)$, the equation becomes
$$
w^2 + a_2y^3z + a_4yz^3,
$$
so $R_1$ is singular over $[0:\sqrt{a_4}:\sqrt{a_2}]$. Similarly, $R_2$ is singular over $[\sqrt{a_3}:0:\sqrt{a_1}]$.
Note that these points must be distinct, for otherwise $X$ is singular over $[0:0:1]$ by Proposition \ref{smooth}.

If these two points are distinct and different from $[0:0:1]$, we can apply a suitable substitution that preserves $A$ to move them to $[0:1:0]$ and $[1:0:0]$. Then, we can repeat the substitution of the first paragraph and, after rescaling, arrive at Case (a).

If the two points are distinct and one of them is $[0:0:1]$, we can assume without loss of generality that the other one lies on $V(y)$ and move it to $[1:0:0]$. After repeating the substitution of the first paragraph and rescaling, we can assume that $B_1$ is as in Case (b). Finally, after applying a substitution of the form $z \mapsto z + \lambda y, w \mapsto w + \lambda z^2 + \lambda^2 yz + \lambda^3 y^2$ for a suitable $\lambda$, we may assume that $B_0(0,1,0) = 0$.

\smallskip
 (3)  
$A = x^2$
\smallskip

Applying a substitution of the form $w \mapsto w + Q$ for a suitable quadratic form $Q$ allows us to assume that $B = xB_1 + B_0$ for homogeneous forms $B_0$ and $B_1$ in $y$ and $z$.

Let $R'$ be the preimage of $V(x)$. As in Case (2), since $R' \in |-K_X|$, $R'$ must be reduced. Restricted to $V(x)$, the double cover becomes
$$
w^2 + B_0(y,z) = 0,
$$
hence $R'$ is singular over the common zero of $B_{0,y}$ and $B_{0,z}$.
We can assume that this zero lies at $[0:0:1]$, that is, that $yz^3$ does not occur in $B_0$ and $y^3z$ occurs with non-zero coefficient. After rescaling, we may assume that $y^3z$ occurs with coefficient $1$. 

Applying a substitution of the form $z \mapsto z + \lambda_1 x + \lambda_2 y, y \mapsto y + \lambda_3 x$ for suitable $\lambda_i$ and repeating the substitution of the first paragraph, we can eliminate the monomials $y^3$ and $y^2z$ in $B_1$ and the monomial $y^4$ in $B_0$. Computing partials, we see that $X$ is singular if and only if $B_1(0,1) = 0$. Hence, after rescaling, we may assume that $B$ is as claimed. 
\end{proof}

\subsection{Fake bitangents and odd theta characteristics} \label{sec: fakebitangent}
It is known that a del Pezzo surface $X$ of degree $2$ contains $56$ $(-1)$-curves (see \cite[8.7]{CAG}, 
where the proof is characteristic free).   
They come in pairs $E_i+E_i' \in |-K_X|$ with $E_i\cdot E_i' = 2$. The Geiser involution $\gamma$ switches the two curves in a pair.
The image of each pair under any birational  morphism $\pi:X\to \bbP^2$
 is either the union of a line through two points $p_i,p_j$ and the conic through the remaining 5 points, or a cubic passing through $p_1,\ldots,p_7$ with a double point at some $p_i$ (and one curve of the pair is contracted by $\pi$).
The image of $E_i+E_i'$ under the anti-canonical map $f$ is a line $\ell$. 

If $p \neq 2$, each of the resulting $28$ lines is a bitangent line to the branch quartic curve and, 
conversely, every bitangent to the branch quartic gives rise to a pair of $(-1)$-curves. 
A bitangent line intersects the branch curve at two 
points, not necessarily distinct, whose sum is an odd theta characteristic of the curve. It is known that the number of odd theta characteristics on a smooth 
curve of genus $3$ is equal to $28$.

For arbitrary $p$, we still have the following.

\begin{lemma}\label{lm:4.1} The preimage $f^{-1}(\ell)$ of a line $\ell$ is a sum of two $(-1)$-curves if and only if $f^{-1}(\ell)$ is reducible.
\end{lemma}

\begin{proof} Since $f$ has degree $2$ and $\ell$ is integral, the curve $f^{-1}(\ell)$ is reducible if and only if it has two irreducible components $L_1$ and $L_2$. These components satisfy $L_1+L_2\in |-K_X|$, $L_1\cdot L_2 = 2$, and $L_1^2 = L_2^2$. Via adjunction, this easily implies that 
$L_1$ and $L_2$ are $(-1)$-curves. The converse is clear.
\end{proof}

So, even if $p = 2$, we have $28$ splitting lines, which we call \emph{fake bitangent lines} in analogy with the situation in the other characteristics. For the rest of this section, we assume $p = 2$. Since the anti-canonical map is 
\'etale outside the branch curve $V(A)$, the intersection  $E_i\cap E_i'$ lies on the ramification curve 
$R$.   Let 
$\calL = \calO_R(E_i) \cong \calO_R(E_i')$. It is an invertible sheaf on $R$ of degree 2. 
We have 
$$\calL^{\otimes 2}\cong \calO_R(E_i+E_i')\cong \calO_R(-K_X).$$
Since $B\in |\calO_{\bbP^2}(2)|$, we have $R\in |-2K_X|$. By the adjunction formula
$$\omega_R \cong \calO_R(-2K_X+K_X) \cong \calL^{\otimes 2}.$$
Invertible sheaves $\mathcal{L}$ on $R$ that satisfy this property are called \emph{invertible theta characteristics}.
They are called \emph{even}, \emph{odd}, or \emph{vanishing} according to whether their space of global sections is even-dimensional, odd-dimensional, or at least two-dimensional, respectively.
We note that, on singular curves, there can be theta characteristics which are not invertible, see 
\cite{Barth}, \cite{Beauville77}. In the following, we will only discuss invertible theta characteristics, so we drop the ``invertible'' from the notation.

Let $\Theta(R)$ be the set of isomorphism classes of theta characteristics on $R$ and let $J(R)$ be the identity component of the Picard scheme of $R$, also called the generalized Jacobian of $R$.
\begin{lemma} \label{lem: generalizedjacobian}
The generalized Jacobian $J(R)$ of $R$ is isomorphic to $\mathbb{G}_a^3$.
\end{lemma}
\begin{proof}
Since $R$ is of arithmetic genus $3$, $J(R)$ is a commutative group scheme of dimension $3$.
As $R_{\red}$ has only unibranched singularities, \cite[Proposition 5, Proposition 9]{NeronModels}
 shows that $J(R)$ is unipotent. Finally, we have a factorization of the absolute 
 Frobenius $F: R \to V(A) \to R$. Note that $J(V(A))$ is trivial, even if $V(A)$ is non-reduced, 
 since $H^1(V(A),\mathcal{O}_{V(A)}) = 0$. Since $F^*$ is multiplication by $p$ on $J(R)$, 
 we obtain that $J(R)$ is $p$-torsion, hence isomorphic to $\mathbb{G}_a^3$.
\end{proof}
In particular, $J(R)(k)$ is an infinite $2$-torsion group and it acts on $\Theta(R)$ via tensor products. 
It is easy to check that $\Theta(R)$ is a torsor under 
$J(R)(k)$ via this action. This already shows that the problem of finding (fake) bitangents 
using theta characteristics on $R$ in characteristic $2$ is much more subtle than it is in the other
 characteristics. Let us give an example that further illustrates this point.
\begin{example} \label{ex: thetacharacteristic}
Assume that $V(A)$ is a smooth conic.

Consider $\pi: R \to V(A) \overset{\sim}{\to} \mathbb{P}^1$. We have $\pi^* \mathcal{O}_{\mathbb{P}^1}(2) = (f|_R)^* \mathcal{O}_{V(A)}(1) = (\omega_X)|_R$, so $\mathcal{L}  \coloneqq \pi^* \mathcal{O}_{\mathbb{P}^1}(1)$ is a theta characteristic on $R$. Moreover, we have $h^0(R,\pi^* \mathcal{O}_{\mathbb{P}^1}(1)) = 2$, so $\mathcal{L}$ is a vanishing theta characteristic. In fact, this is the unique vanishing theta characteristic on $R$: Indeed, let $\mathcal{L}'$ be another vanishing theta characteristic. Then, the Riemann--Roch formula yields
$$
h^0(R,\mathcal{L} \otimes \mathcal{L}') - h^0(R,\omega_R \otimes \mathcal{L}^{-1} \otimes \mathcal{L}'^{-1}) = 2.
$$
Since $h^0(R,\mathcal{L}) \geq 2$ and $h^0(R,\mathcal{L}') \geq 2$, we have $h^0(R,\mathcal{L} \otimes \mathcal{L}') \geq 3$, so $h^0(R,\omega_R \otimes \mathcal{L}^{-1} \otimes \mathcal{L}'^{-1}) \neq 0$. Since $R$ is integral and $\omega_R \otimes \mathcal{L}^{-1} \otimes \mathcal{L}'^{-1}$ has degree $0$, this implies that $\mathcal{L} \cong \mathcal{L}'$.

Next, let $\ell$ be any line in $\mathbb{P}^2$ such that $f^{-1}(\ell)$ meets $R$ in two distinct smooth points. Then, $f^{-1}(\ell \cap V(A))_{\red}$ defines an effective theta characteristic $\mathcal{L}$ on $R$. By the previous paragraph, we have $h^0(R,\mathcal{L}) = 1$, hence all the infinitely many theta characteristics arising in this way are odd. It would be interesting to find an abstract characterization of the fake bitangent lines among the odd theta characteristics of $R$.
\end{example}

Nevertheless, we can find explicit equations of fake bitangent lines using the following result.

\begin{lemma} \label{lem: splittingcriterion}
Let $C \to \bbP^1$ be an Artin--Schreier double cover given by an equation of the form
$$w^2+f(u,v)w+g(u,v) = 0,$$
 where $f$ and $g$ are homogeneous polynomials of degree $n$ and $2n$, respectively, and $f \neq 0$. 
 Then, $C$ is reducible if and only if there exists a homogeneous polynomial $h$ of degree $n$ with $g(u,v) = f(u,v)h(u,v)+h(u,v)^2$.
\end{lemma}

\begin{proof} 
If there exists an $h$ as in the assertion, then $w^2 + fw + g = (w + f + h)(w + h)$, so $C$ is obviously reducible.

Conversely, assume that $C$ is reducible. Then, $C$ has exactly two irreducible components and these components are interchanged by the substitution $w \mapsto w + f$.
In other words, we can write $w^2 + fw + g = h' (h' + f)$, where $h'$ is a weighted homogeneous polynomial of degree $n$. This is only possible if $h'$ is of the form $h' = w + h$ for some $h$ homogeneous of degree $n$ in the variables $u$ and $v$. Then, $w^2 + fw + g = (w +h)(w + h + f) = w^2 + fw + h^2 + fh$, hence $g = fh + h^2$, as claimed. 
\end{proof}

Finally, for later use, we record some simple restrictions on the possible positions of fake bitangent lines with respect to the singularities of $R$.

\begin{proposition}\label{prop:strangepoint} 
Let $\ell$ be a fake bitangent line that passes through the image $P$ of a singular point of an irreducible component of $R_{\red}$. Then, $V(A)$ is smooth and $\ell$ is tangent to $V(A)$ at $P$.
\end{proposition}

\begin{proof} 
Write $f^{-1}(\ell) = L_1 + L_2$. Since $R_{\red}$ is singular at $f^{-1}(P)$, $L_i$ and $R$ have intersection multiplicity at least $2$ in $f^{-1}(P)$. Since $R \in |-2K_X|$ and $L_1 + L_2 \in |-K_X|$, we have $(L_1 + L_2).R = 2K_X^2 = 4$, hence $L_1 + L_2$ and $R$ meet only in $f^{-1}(P)$. Therefore, their images in $\mathbb{P}^2$ meet only in $P$. If $V(A)$ is smooth, this implies that $\ell$ is tangent to $V(A)$ in $P$. If $V(A)$ is the union of two lines, this implies that $\ell$ passes through their intersection. However, in this case, $L_i$ and $R$ have intersection multiplicity at least $3$ in $f^{-1}(P)$, which is absurd.
Finally, if $V(A)$ is a double line, then $R_{\red} \in |-K_X|$ and $2 = K_X^2 = (L_1 + L_2).R_{\red} \geq 4$, 
a contradiction.
\end{proof}

\begin{remark}
We note that there are del Pezzo surfaces for which fake bitangents satisfying the properties of Proposition \ref{prop:strangepoint} exist. See Proposition \ref{prop: strangefibration} for a classification in terms of the normal forms of Theorem \ref{canform}.
\end{remark}

\subsection{Strange elliptic fibrations} \label{sec: strangefibration}
To each del Pezzo surface $X$ of degree $2$ in characteristic $2$ with branch locus $V(A)$ of the anti-canonical map $f: X \to \bbP^2$, there is a naturally associated point $P_X$ in $\bbP^2$: If $V(A)$ is smooth, we let $P_X$ be the strange point of $V(A)$, if $V(A)$ is the union of two lines, we let $P_X$ be their intersection, and if $V(A)$ is a double line, we let $P_X$ be the image of the singular point of $f^{-1}(V(A))_{\red}$. We call $P_X$ the \emph{strange point} of $X$ and note that the action of $\Aut(X)$ fixes $P_X$.

The pencil $\calP$ of lines through $P_X$ is $\Aut(X)$-invariant as well. Its preimage $\calC$ in $X$ is an $\Aut(X)$-invariant pencil of curves of arithmetic genus $1$ with two base points if $V(A)$ is smooth and with one base point of multiplicity $2$ if $V(A)$ is singular. 
We let $\pi:Y \to X$ be the blow-up of the base points of $\calC$. Then, 
$\calC$ defines a relatively minimal genus one fibration $\phi: Y \to \bbP^1$. Since the map $X\to \bbP^2$ is separable and 
a general line in the pencil is not contained in $V(A)$, its pre-image on $Y$ is a smooth elliptic curve (see the details in the proof of the next Proposition). Thus, the genus on fibration 
is an elliptic fibration. We  call it the \emph{strange elliptic fibration} associated to $X$.

By construction, the group $\Aut(X)$ lifts to a subgroup 
$\Aut(Y)$ and we will use this in Proposition \ref{prop: strangeaut} to find restrictions on the 
possible structure of $\Aut(X)$. To make the most of this connection, we will now describe the singular fibers of the elliptic fibration $\phi:Y\to \bbP^1$. 
We employ Kodaira's notation: we say that a fiber isomorphic to an irreducible cuspidal cubic curve is 
of type ${\rm II}$,  a fiber that 
consists of two smooth rational curves intersecting non-transversally at one point is of type ${\rm III}$, and a
fiber that consists of three smooth rational curves intersecting at one point is of type ${\rm IV}$.

We use the normal forms of Theorem \ref{canform}, so that $A=x^2+yz,xy,$ or $x^2$ and $P_X = [1:0:0]$ in the first case and $P_X = [0:0:1]$ in the other two cases.
In the first case, we let $\ell_{[t_0:t_1]}$ be the line $V(t_0y + t_1z)$ and in the other two cases, we let $\ell_{[t_0:t_1]}$ be the line $V(t_0x + t_1y)$. The fiber of $\phi$ corresponding to $\ell_{[t_0:t_1]}$ is denoted by $F_{[t_0:t_1]}$.

\begin{proposition}\label{prop: strangefibration2}
The generic fiber of the strange elliptic fibration associated to $X$ is a supersingular elliptic curve. Its singular fibers are of type ${\rm II}$, ${\rm III}$, or ${\rm IV}$ and its Mordell-Weil group is torsion-free. More precisely:
\begin{enumerate}
\item[(1) \phantom{(a)}]  If $A = x^2+yz$, then the following hold:
\begin{itemize}
    \item The fiber $F_{[t_0:t_1]}$ is smooth if and only if $t_0y + t_1z \nmid B_1$. 
    \item The fiber $F_{[t_0:t_1]}$ is of type ${\rm III}$ if $\ell_{[t_0:t_1]}$ is a fake bitangent and of type ${\rm II}$ otherwise.
    \item The line $\ell_{[1:0]}$ is a fake bitangent if and only if $e = 0$.
    \item The line $\ell_{[0:1]}$ is a fake bitangent if and only if $a = 0$.
    \item The line $\ell_{[1:1]}$ is a fake bitangent if and only if  $a+b+c+d+e = 0$.
\end{itemize}
\item[(2) (a)] If $A = xy$ and $B_1 = xz^3 + yz^3$, then the following hold:
\begin{itemize}
\item The fiber $F_{[t_0:t_1]}$ is smooth if and only if $[t_0:t_1]\ne [1:0], [0:1], [1:1]$. 
\item $F_{[1:0]}$ and $F_{[0:1]}$ are of type ${\rm II}$.
\item $F_{[1:1]}$ is of type ${\rm IV}$ if $\ell_{[1:1]}$ is a fake bitangent and of type ${\rm III}$ otherwise.
\item The curve $\ell_{[1:1]}$ is a fake bitangent if and only if $c = d^2 + e^2$.
\end{itemize}
\item[(2) (b)] If $A = xy$ and $B_1 = xz^3 + y^3z$, then the following hold:
\begin{itemize}
\item The fiber $F_{[t_0:t_1]}$ is smooth if and only if $[t_0:t_1]\ne [1:0], [0:1]$. 
\item The curve $F_{[0:1]}$ is of type ${\rm II}$.
\item The curve $F_{[1:0]}$ is of type ${\rm III}$. 
\end{itemize}
\item[(3) \phantom{(a)}] If $A = x^2$, then the following hold:
\begin{itemize}
    \item The fiber $F_{[t_0:t_1]}$ is smooth if and only if $[t_0:t_1]\ne [1:0]$.
    \item The curve $F_{[1:0]}$ is of type ${\rm III}$.
\end{itemize}
\end{enumerate}
\end{proposition}

\begin{proof}
We study each case separately. The Mordell-Weil group is torsion free by \cite[Main Theorem]{OguisoShioda}, since the lattice spanned by fiber components is of rank at most $4$ in each case.

\begin{enumerate}
    \item[(1) \phantom{(a)}] In this case $A = x^2 + yz$. 
    
    First, consider $\ell_{[1:t]} = V(y + tz)$. Plugging $y = tz$ into the equation of $X$, we obtain
$$
w^2 + (x^2 + tz^2)w + xB_1(tz,z) + (at^4 + bt^3 + ct^2 + dt + e)z^4
$$
with $B_1(tz,z) \in \{ \lambda t(t+1)z^3,t^2z^3,t^3z^3\}$.
If $y + tz \nmid B_1$, then $B_1(tz,z) \neq 0$, so taking partials with respect to $x$ and $w$ shows that a singular point must satisfy $x = z = 0$, which is absurd.
If $y + tz \mid B_1$, then $B_1(tz,z) = 0$ and $F_{[1:t]}$ is singular over $[t:t:1]$. 
Similarly, one checks that $F_{[0:1]}$ is singular.

The equation 
$$
w^2 + (x^2 + tz^2)w + xB_1(tz,z) + (at^4 + bt^3 + ct^2 + d + e)z^4
$$
shows that $F_{[1:t]}$ is a double cover of $\bbP^1$ branched 
over a single point. Hence, if $F_{[1:t]}$ is smooth, then it is supersingular, and 
if it is singular and irreducible, it is a cuspidal cubic.

Finally, consider the curve $F_{1:0}$ given by
$$
w^2 + x^2w + ez^4.
$$
By Lemma \ref{lem: splittingcriterion}, it is clear that $F_{[1:0]}$ is reducible if and only if $e = 0$.
The calculation for $F_{[1:1]}$ and $F_{[0:1]}$ is similar.

\item[(2) (a)] In this case $A = xy$ and $B_1 = xz^3 + yz^3$.

First, consider $\ell_{[1:t]} = V(x + ty)$ with $t \neq 0,1$. Plugging $x = ty$ into the equation of $X$, we obtain
$$
w^2 + ty^2w + (t+1)yz^3 + B_0(ty,y,z)^2.
$$
Then, taking partials shows that $F_{[1:t]}$ is smooth. Since it is a double cover of $\bbP^1$ branched over a single point, it is supersingular.

Next, consider $F_{[1:1]}$, whose image in $X$ is given by
$$
w^2 + y^2w + ((a + b)y^2 + (d + e)yz + cz^2)^2.
$$
This curve is singular over $[0:0:1]$, so $F_{[1:1]}$ has one irreducible component contracted by $Y \to X$. By Lemma \ref{lem: splittingcriterion}, the image of $F_{[1:1]}$ in $X$ is reducible if and only if $c = d^2 + e^2$.

Finally, the curves $F_{[1:0]}$ and $F_{[0:1]}$ are isomorphic to their images in $X$ and these images are irreducible and cuspidal by Theorem \ref{canform}.

\item[(2) (b)] In this case $A = xy$ and $B_1 = xz^3 + y^3z$.

First, consider $\ell_{[1:t]} = V(x + ty)$ with $t \neq 0$. Plugging $x = ty$ into the equation of $X$, we obtain
$$
w^2 + ty^2w + tyz^3 + y^3z + B_0(ty,y,z)^2.
$$
As in the previous cases, taking partials shows that $F_{[1:t]}$ is smooth and supersingular.

The curve $F_{[0:1]}$ is isomorphic to its image in $X$, since $f^{-1}(\ell_{[0:1]})$ is smooth over the point $[0:0:1]$. Hence, $F_{[0:1]}$ is of type ${\rm II}$. On the other hand, the curve $F_{[1:0]}$ is of type ${\rm III}$, since its image in $X$ has multiplicity $2$ over $[0:0:1]$.

\item[(3) \phantom{(a)}] In this case $A = x^2$.

First, consider $\ell_{[t:1]} = V(tx + y)$. Plugging $y = tx$ into the equation of $X$, we obtain
$$
w^2 + x^2w + xz^3 + (b t^2 + at)x^2z^2 + c z^4.
$$
Then, taking partials shows that $F_{[t:1]}$ is smooth. Since it is a double cover of 
$\bbP^1$ branched over a single point, it is supersingular.

The curve $F_{[1:0]}$ is of type ${\rm III}$, by the same argument as in the previous case.
\end{enumerate}
\end{proof}

\begin{remark}
The classification of singular fibers of rational elliptic surfaces with a section in characteristic $2$ can be found in \cite{Lang}.
Lang shows that in the cases where the general fiber is a supersingular elliptic curve, the 
number of singular fibers is at most $3$, which agrees with what we observed in the case of strange elliptic fibrations. Proposition \ref{prop: strangefibration2} shows that the singular fibers that occur on strange genus one 
fibrations are of type $9A,9B,10A,10B,10C$ or $11$ in Lang's terminology.
\end{remark}

\section{Automorphism groups of del Pezzo surfaces of degree $2$}

\subsection{Preliminaries} Recall once more from Section \ref{SS: 2.1} that a del Pezzo surface $X$ of degree 2 is a 
surface of degree $4$ in $\bbP(1,1,1,2)$ given by an equation of the form
$$
w^2 + A(x,y,z)w + B(x,y,z) = 0.
$$
Since this is the anti-canonical model of $X$ and $\omega_X^{-n}$ admits a natural $\Aut(X)$-linearization for all $n$, we obtain that $\Aut(X)$ is isomorphic to the subgroup of $\Aut(\bbP(1,1,1,2))$ of automorphisms that preserve $X$.

The structure of the group $\Aut(\bbP(1,1,1,2))$ is well-known. The vector space $\Bbbk[x,y,z]_2$ of quadratic forms is 
a normal subgroup of $\Aut(\bbP(1,1,1,2))$ that acts via $(x,y,z,w) \mapsto (x,y,z,w+Q)$. The quotient by this subgroup 
is the group of transformations that change $(x,y,z)$ linearly and multiply $w$ by a scalar. Since the transformation 
$(x,y,z,w)\mapsto (\lambda x,\lambda y, \lambda z, \lambda^2w)$ is the identity, this quotient is isomorphic to 
$\GL_3(\Bbbk)/\mu_2(\Bbbk)$. Since we are in characteristic $2$, the subgroup $\mu_2(\Bbbk)$ is trivial. This gives an isomorphism
$$\Aut(\bbP(1,1,1,2))\cong \Bbbk[x,y,z]_2: \GL_3(\Bbbk).$$
We denote elements of this group by $(Q,g)\in \Bbbk[x,y,z]_2 \times \GL_3(\Bbbk)$ where the semi-direct product structure is
$$(Q,g)\circ (Q',g') = (g^*(Q')+Q,gg').$$

Using this description of $\Aut(\bbP(1,1,1,2))$, it is straightforward to calculate the subgroup of automorphisms preserving $X$. We obtain
$$\Aut(X) \cong \{(Q,g): g^*(A) = A, g^*(B) = B+AQ+Q^2\}.$$
The kernel of the homomorphism 
$$\Aut(X) \to \GL_3(\Bbbk), \quad (Q,g) \mapsto g$$
is generated by the Geiser involution $\gamma$. We let $G(X)$ be the image of $\Aut(X)$ in $\GL_3(\Bbbk)$.
 
\begin{lemma} \label{lem: injective}
The homomorphism $G(X) \to \GL_3(\Bbbk) \to \PGL_3(\Bbbk)$ is injective.
\end{lemma}
\begin{proof}
Let $g \in G(X)$ be in the kernel of this homomorphism. Then, $g = \lambda {\rm I}_3$ for some $\lambda \in \Bbbk^{\times}$. On the other hand, by definition of $G(X)$, we have $g^*(A) = A$. Since $A$ has degree $2$, this implies $\lambda^2 = 1$. Hence, $\lambda = 1$.
\end{proof}

We recall from \cite[\S 1]{DM} that a choice of a geometric basis of a blow-up $X\to \bbP^2$ of seven points defines 
an injective homomorphism
\beq
\rho:\Aut(X)\to W(\sfE_7).
\eeq
The image of the Geiser involution is equal to $-\id_{\sfE_7}.$ It is known that 
$W(\sfE_7) = \la -\id_{\sfE_7} \ra \times W(\sfE_7)^+,$  where $W(E_7)^+ \subseteq W(E_7)$ is the kernel of the
 determinant map. 
 
In particular, to determine $\Aut(X)$, it suffices to determine $G(X)$ and both groups are isomorphic to subgroups of $W(E_7)$ via $\rho$. This puts severe restrictions on the possible structure of $G(X)$. Finally, we can use the strange genus one fibrations of the previous section to get information on $G(X)$.
\begin{proposition} \label{prop: strangeaut}
Let $\phi:Y \to \bbP^1$ be the strange elliptic fibration associated to $X$. Choose an exceptional curve $E$ of $Y \to X$ as the zero section of $\phi$ and let $C$ be the second exceptional curve. Then, there is a homomorphism $\varphi: \Aut(X) \to \Aut(Y)$ that satisfies the following properties:
\begin{enumerate}
    \item $\varphi$ is injective.
    \item $\varphi(\gamma)$ preserves every fiber of $\phi$.
    \item If $V(A)$ is smooth, then $C$ is a section of $\phi$. We have $\varphi(\gamma) = t_C \circ \iota$, where $\iota$ is the negation automorphism and $t_C$ is translation by $C$.
    \item If $V(A)$ is singular, then $C$ is a component of a reducible fiber of $\phi$. 
    We have $\varphi(\gamma) = \iota$ and $\varphi$ factors through the stabilizer of the pair $(E,C)$. 
\end{enumerate}
\end{proposition}
\begin{proof}
The surface $Y$ is obtained by blowing up $X$ in two points that are uniquely determined by $V(A)$, hence stable under the action of $\Aut(X)$. This shows existence and injectivity of the homomorphism $\varphi$. The fibration $\phi$ is induced by the pencil of lines in $\bbP^2$ through the strange point of $X$. Since $\gamma$ preserves these lines, it preserves the fibers of $\phi$.

If $V(A)$ is smooth, then $E$ and $C$ are interchanged by $\varphi(\gamma)$. 
The automorphism $t_{-C} \circ \varphi(\gamma) \circ \iota$ maps $E$ to $E$ and $-C$ to $-C$. It is well-known that every fixed point of a non-trivial automorphism of an elliptic curve is a torsion point. On the other hand, by Proposition \ref{prop: strangefibration2}, $\phi$ has no torsion sections, so $t_{-C} \circ \varphi(\gamma) \circ \iota = {\rm id}$, which yields Claim (3).

If $V(A)$ is singular, then $C$ is a $(-2)$-curve which meets $E$, hence it is the identity component of a reducible fiber of $\phi$. Since $\varphi(\gamma)$ is an involution that preserves $E$, we have $\varphi(\gamma) = \iota$ and we obtain Claim (4). 
\end{proof}

\subsection{Classification}

\begin{theorem} \label{thm: main2}
Every del Pezzo surface of degree $2$ in characteristic $2$ such that in the decomposition $\Aut(X) \cong 2 \times G(X)$ the group $G(X)$ is non-trivial is a surface of degree 4 in $\bbP(1,1,1,2)$ given by an equation of the form
$$
w^2 + Aw + B,
$$
where $(A,B,G(X))$ is one of the following:
 $$
 \resizebox{\textwidth}{!}{
 $
    \begin{array}{|c|c|c|c|c|c|c|} \hline
    \text{Name}  & A & B & B_1 & B_0 & G(X) &\# \text{Parameters}  \\ \hline \hline
    (1) (a) (i)\phantom{ii} & x^2+yz & xB_1 + B_0 & \lambda yz(y+z) & ay^4 + by^3z + cy^2z^2 + byz^3 + az^4 & 2 &4\\ \hline
    (1) (a) (ii)\phantom{i} & x^2+yz & xB_1 + B_0 & \lambda yz(y+z) & ay^4 + \lambda y^3z + a y^2z^2 + \lambda yz^3 + az^4 & S_3 & 2\\ \hline
    (1) (c) (i)\phantom{ii} & x^2+yz & xB_1 + B_0 & y^3 & by^3z + cy^2z^2 + ez^4 & 2^3 & 3 \\ \hline
    (2) (a) (i)\phantom{ii} & xy & B_1 + B_0^2 & xz^3 + yz^3 & ax^2 + ay^2 + cz^2 + dxz + dyz & 2 &3  \\ \hline
    (2) (a) (ii)\phantom{i} & xy & B_1 + B_0^2 & xz^3 + yz^3 & ax^2 + by^2 & 3 & 2  \\ \hline
    (2) (a) (iii) & xy & B_1 + B_0^2 & xz^3 + yz^3 & ax^2 + ay^2 & 6 & 1  \\ \hline
    (3) (i)\phantom{(a)ii}  & x^2 & xB_1 + B_0 & z^3 & y^3z + cz^4 & 3 & 1 \\ \hline
    (3) (ii)\phantom{(a)i} & x^2 & xB_1 + B_0 & z^3 & y^3z & 9 & 0 \\ \hline
    \end{array}
    $
    }
    $$
The parameters satisfy the following conditions:
\begin{enumerate} [leftmargin=21mm]
    \item[(1) (a) (i)\phantom{ii}] $\lambda \neq 0, \lambda^2 + c \neq 0, b^2 + a \neq 0, (b,c) \neq (\lambda,a)$.
    \item[(1) (a) (ii)\phantom{i}] $\lambda \neq 0, \lambda^2 + a \neq 0.$
    \item[(1) (c) (i)\phantom{ii}] $e \neq 0$.
    \item[(2) (a) (i)\phantom{ii}] $a \neq 0, (c,d) \neq (0,0)$.
    \item[(2) (a) (ii)\phantom{i}] $a \neq 0, b \neq 0, a \neq b$.
    \item[(2) (a) (iii)] $a \neq 0$.
    \item[(3) (i) \phantom{(a)ii}] $c \neq 0$.
    \item[(3) (ii)\phantom{(a)ii}] None. 
\end{enumerate}
\end{theorem}

\begin{proof}
We use the normal forms of Theorem \ref{canform} and the description of $\Aut(X)$ and $G(X)$ given in 
the beginning of the current section. We go through the cases of Theorem \ref{thm: main2}.

\begin{enumerate}
    \item[(1) (a)] Here, $X$ is given by an equation of the form
        $$
        w^2 + (x^2 + yz)w + \lambda xyz(y+z) +  B_0
        $$
        with
        $$
        B_0 = ay^4 + by^3z + cy^2z^2 + dyz^3 + ez^4
        $$
        and the cusps lie over $[0:1:0],[0:0:1]$, and $[1:1:1]$. Let $(Q,g) \in \Aut(X)$ 
        be an automorphism of $X$. Then, $g$ preserves the three points lying under the cusps. 
        Moreover, if $g$ fixes the three cusps, then it fixes $V(A)$ pointwise, hence $g$ is trivial 
        in $\PGL_3(\Bbbk)$, so, by Lemma \ref{lem: injective}, $g$ is the identity and $(Q,g)$ coincides with the Geiser involution. 
        Hence, $G(X)$ acts faithfully on $\{[0:1:0],[0:0:1],[1:1:1]\}$.
        
        Note that $G(X)$ contains an involution if and only if $X$ admits an equation where this involution 
        is given by $y \leftrightarrow z$. This involution is in $G(X)$ if and only 
        if there exists a quadratic form $Q$ such that
        \begin{equation}\label{eq: Case1a}
        Q^2 + (x^2 + yz)Q  = g^*B_0 + B_0.
        \end{equation}
        Since $Q^2 + (x^2 + yz)Q$ contains a non-zero monomial divisible by $x^2$ as soon as it is 
        non-zero and $g^*B_0 + B_0$ does not contain such a monomial, 
        we must have $Q \in \{0, x^2 + yz\}$ and Equation \eqref{eq: Case1a} holds if and only if 
        $a = e$ and $b = d$, as claimed.

       Next, note that $G(X)$ contains an automorphism $g$ of order $3$ if and only if $g$ is given by
       $x \mapsto x + z, y \mapsto z, z \mapsto y + z$ and 
       there exists a quadratic form $Q$ such that
       \begin{equation} \label{eq: Case1b}
        Q^2 + (x^2 + yz)Q  =  \lambda yz^2(y+z) + g^*B_0 + B_0.
        \end{equation}
        By the same argument as in the previous paragraph, we have $Q \in \{0, x^2 + yz\}$ and 
        Equation \eqref{eq: Case1b} holds if and only if $a = c = e$ and $b = d = \lambda$. In particular, note that these conditions imply the conditions of the previous paragraph in this case, hence $G(X) = S_3$.

        \item[(1) (b)] In this case, $V(A)$ is smooth and $R$ has two non-isomorphic singularities. Then, $g\in G(X)$ must fix the images 
        of them on $V(A)$. Since an automorphism of order 2 of $\bbP^1$ has only one fixed point, we may assume 
        that the order of $g$ is odd. 
        By Proposition \ref{prop:strangepoint}, the line $\ell$ through the images of the singularities is not a fake bitangent. Its preimage $E$ in $X$ is an integral curve of arithmetic genus one and the Geiser involution has two fixed points on $E$. Hence, either $E$ is smooth and ordinary, or nodal. In both cases, there is no non-trivial automorphism of odd order that commutes with the involution, hence $g$ fixes $\ell$ pointwise. Since $g$ also fixes the strange point $P$ on $V(A)$ and the projection from $P$ is inseparable, $g$ fixes $V(A)$ pointwise, hence $g$ is the identity.
    
         \item[(1) (c)] Here, $X$ is given by an equation of the form
        $$
        w^2 + (x^2 + yz)w + xy^3 + B_0
        $$
        with
        $$
        B_0 = by^3z + cy^2z^2 + dyz^3 + ez^4.
        $$
        The singularity of $R$ lies over $[0:0:1]$. 
        An element $g \in G(X)$ of odd order has at least two fixed points on $V(A)$ and then the same argument as in the previous case shows that $g$ is the identity.
        Therefore, $G(X)$ is a $2$-group that acts on $V(A) \cong \bbP^1$ with a fixed point. In particular,
        $G(X)$ is isomorphic to a subgroup of $\mathbb{G}_a(\Bbbk)$, hence isomorphic to 
         $2^n$ for some $n \geq 0$. 
         
          We may assume that $g$ acts as $x\mapsto x+\alpha y, y\mapsto y, z\mapsto z+\alpha^2y$.
         Then $g$ lifts to $\Aut(X)$ if and only if there exists a quadratic form $Q$ such that 
         $$(x^2+yz)Q+Q^2 = \alpha y^4+ g^*(B_0)+B_0. $$
         Since the right-hand side does not contain a monomial divisible by $x^2$, we get, 
         as in the previous cases, $Q = x^2+yz$ or $Q= 0$. Comparing coefficients yields the system of equations 
         \begin{eqnarray*}
        d\alpha^2 &=& 0 \\
        d\alpha^4&=& 0 \\
       e\alpha^8+d\alpha^6+c\alpha^4 + b \alpha^2+\alpha &=& 0 \\
        \end{eqnarray*}
        So, if $d \neq 0$, then $\alpha = 0$ and $G(X)$ is trivial. If $d = 0$, there are $8$ possibilities for $\alpha$, one for each root of $ex^8 + cx^4 + bx^2 + x$. All the roots are distinct since the derivative of this polynomial is $1$. Here, we also use that $e \ne 0$ by Theorem \ref{canform}. Thus
         $G(X) \cong 2^3$.

        \item[(2) (a)] Here, $X$ is given by an equation of the form
        $$
        w^2 + xyw + xz^3 +yz^3 + B_0^2
        $$
        with
        $$
        B_0 = ax^2 + by^2 + cz^2 + dxz + eyz.
        $$
        The singularities of the irreducible components of $R$ lie over $[1:0:0]$ and $[0:1:0]$.
        Let $(Q,g) \in \Aut(X)$. Then, $g$ preserves these two points and the intersection of $V(x)$ and $V(y)$. 
        Moreover, by Proposition \ref{prop: strangeaut} and Proposition \ref{prop: strangefibration2}, $g$ preserves the line $V(x+y)$.
        
        Assume that $g$ has odd order. Then, $g$ preserves the three lines $V(x),V(y)$, and $V(x+y)$, hence it is of the form $(x,y,z) \mapsto (x, y, \alpha z)$. The quadratic form $Q$ satisfies
        $$
        Q^2 + xyQ = g^*(B_1 + B_0^2) + B_1 + B_0^2.
        $$
        The right-hand side does not contain monomials divisible by $xy$, hence $Q \in \{0,xy\}$.
        Now, $g^*B_1 + B_1 = 0$ implies that $\alpha^3 = 1$, and if $\alpha \neq 1$, then $g^*B_0^2 + B_0^2 = 0$ holds if and only if $c = d = e = 0$.
        
        Assume that $g$ has order a power of $2$.
        If $g$ does not swap the points $[1:0:0]$ and $[0:1:0]$, then it acts diagonally, hence it is the identity. Therefore, we may assume that $g$ swaps these two points and $g^2 = {\rm id}$. Hence, $g$ acts as $x \leftrightarrow y$. The quadratic form $Q$ satisfies
        $$
        Q^2 + xyQ = g^*(B_1 + B_0^2) + B_1 + B_0^2,
        $$
        hence $a = b$ and $d = e$.

        \item[(2) (b)] Here, $X$ is given by an equation of the form
        $$
        w^2 + xyw + xz^3 +y^3z + B_0^2
        $$
        with
        $$
        B_0 = ax^2 + cz^2 + dxz + eyz
        $$
        and the singularities of the irreducible components of $R$ map to $[1:0:0]$ and $[0:0:1]$. Let $(Q,g) \in \Aut(X)$.
        
        If $g$ has odd order, then there is a $g$-invariant line $\ell$ through $[1:0:0]$ and we may assume that $\ell \not \subseteq V(A)$. By the same argument as in Case (1) (b), $\ell$ is fixed pointwise.
        Then, every line through $[0:0:1]$ is $g$-invariant. Since $g^*A = A$, this means that $g$ acts as $(x,y,z) \mapsto (x,y,\alpha z)$. An automorphism of this form satisfies $g^*B_1 = B_1$ if and only if $\alpha = 1$, so $g$ is trivial.
        
        If $g$ has order a power of $2$, then by Proposition \ref{prop: strangefibration2}, $\varphi((Q,g)) \in \Aut(Y)$ preserves the two singular fibers of $\phi: Y \to \bbP^1$, hence $\varphi((Q,g))$ acts trivially on the base of $\phi$. The $2$-Sylow subgroup of automorphisms of the geometric generic fiber of $\phi$ is the quaternion group $Q_8$ and $\varphi((Q,g))$ commutes with the unique involution $\varphi(\gamma)$ in $Q_8$. This implies that $(Q,g) \in \langle \gamma \rangle$, so $g$ is trivial.

    \item[(3) \phantom{(a)}] If $V(A)$ is a double line, then $X$ is given by an equation of the form
    $$
    w^2 + x^2w + xB_1 + B_0
    $$
    with
    $
    B_1 = z^3 + ayz^2
    $
    and 
    $
    B_0 = y^3z + by^2z^2 + cz^4.
    $
    The singularity of $R_{\red}$ lies over $[0:0:1]$. Let $(Q,g) \in \Aut(X)$. 
    Then, $g$ is of the form
    $$(x,y,z) \mapsto (x,\alpha x+\beta y, \gamma x+\delta y +\epsilon z)$$
    with $\beta,\epsilon \neq 0$ and $Q$ satisfies the equation
   \begin{equation} \label{eq: Case3}
    Q^2 + x^2Q = x(g^*B_1 + B_1) + g^*B_0 + B_0.
    \end{equation}
    
    The monomials $y^3z,xz^3,xyz^2,xy^2z$ and $xy^3$ do not appear on the left-hand side, hence their coefficients on the right-hand side must be zero. This yields the following conditions:
    \begin{eqnarray*}
    \epsilon &=& \beta^{-3} \\
    \beta^9 &=& 1 \\
    \delta &=& a (\beta + \beta^6) \\
    \alpha &=& a^2(1 + \beta ) \\
    \gamma &=& a^3(1 + \beta^2)
    \end{eqnarray*}
    So, the order of $g$ is equal to the order of $\beta$ in $k^{\times}$, hence it is equal to $1$, $3$ or $9$. Now, we calculate that if $\beta^3 = 1$, then $g^3$ acts as
   $$
   (x,y,z) \mapsto (x,y, a^3(\beta + \beta^2)x + z).
   $$
    Hence, if $a \neq 0$, then $g$ is the identity.
    
    So, assume that $a = 0$, so that, in particular, $\alpha = \delta = \gamma = 0$. Equation \eqref{eq: Case3} becomes
    $$
    Q^2 + x^2Q = (\epsilon^3 + 1)xz^3 + (\beta^3\epsilon + 1)y^3z + b(\beta^2\epsilon^2 + 1)y^2z^2 + c(\epsilon^4 + 1)z^4.
    $$
    On the left-hand side, the coefficients of $z^4$ and $y^2z^2$ are the squares of the coefficients of $x^2z^2$ and $x^2yz$, respectively. Since the latter monomials do not appear on the right-hand side, the coefficients of the former monomials must vanish. Therefore, we get the four conditions:
    \begin{eqnarray*}
    \epsilon^3 + 1 &=& 0 \\
    \beta^3\epsilon + 1 &=& 0 \\
    b(\beta^2\epsilon^2 + 1) &=& 0 \\
    c(\epsilon^4 + 1) &=& 0
    \end{eqnarray*}
    Hence, if $b \neq 0$, then $\beta = \epsilon = 1$, so $G(X)$ is trivial. If $b = 0$ and $c \neq 0$, then $\epsilon = 1$ and $\beta^3 = 1$, and so $G(X) \cong C_3$. If $b = c = 0$, then $\epsilon = \beta^{-3}$ and $\beta^9 = 1$, hence $G(X) \cong C_9$. 
  \end{enumerate}
\end{proof}

\begin{remark}
With our choice of normal form in Theorem \ref{thm: main2}, the map $g \mapsto (0,g)$ defines an explicit section of the surjection $\Aut(X) \to G(X)$ in every case.
\end{remark}

\begin{remark}  \label{rem: order18}
The group $2^4$ that appears in Theorem \ref{thm: main2} occurs as a group of 
automorphisms of a del Pezzo surface of degree 
$4$ in all characteristics \cite{DD}. In characteristic $0$, 
there is a unique conjugacy class of subgroups isomorphic to $2^4$ in the Cremona group.
 One can prove, using the theory of birational links, that in characteristic $2$, 
the two subgroups of $\text{Cr}_\Bbbk(2)$ are not conjugate.
\end{remark} 
 
\begin{remark} The fact that $2$ and $3$ are the only primes that divide the order of $\Aut(X)$ can be proven without the 
classification. It is known that $2,3,5$, and $7$ are the only primes that divide the order of $W(E_7)$. To exclude 
the primes $5$ and $7$, one can use the Lefschetz fixed-point formula and the known traces of elements of $W(E_7)$ 
acting on the the root lattice of type $E_7$ to get a contradiction with the possible structure of the set of fixed points of an 
element of the group $G(X)$. 
\end{remark}

\newpage

\subsection{Conjugacy classes and comparison with the classification in characteristic $0$}
In this section, we determine the conjugacy classes in $W(\sfE_7)$ of the elements of the groups that occur in Theorem \ref{thm: main2} and, whenever possible, compare the surfaces in Theorem \ref{thm: main2} with their counterparts in characteristic $0$ (see \cite[Table 8.9]{CAG}).

First, we note that, as $H^0(X,T_X) = 0$ for all del Pezzo surfaces of degree $2$, the automorphism group of any lift of a del Pezzo surface $X$ in Theorem \ref{thm: main2} to characteristic $0$ is a subgroup of $\Aut(X)$. By Theorem \ref{thm: main2}, $|\Aut(X)| \leq 18$, so Types I,$\hdots$,V and of \cite[Table 8.9]{CAG} do not have a reduction modulo $2$ which is a del Pezzo surface. Similarly, Type VII  of \cite[Table 8.9]{CAG} has no analogue in characteristic $2$.

The surface of Type $(3) (ii)$ of Theorem \ref{thm: main2} is a reduction modulo $2$ of the surface of Type VI in \cite[Table 8.9]{CAG}. Hence, we call this surface Type VI. Since the conjugacy classes of elements of $\Aut(X)$ are the same as the ones of the lift, the entry for Type VI in Table \ref{tbl:autodp2} is the same as the on in \cite[Table 7]{DM}.

The equations of the surfaces of Type $(2) (a) (iii)$ of Theorem \ref{thm: main2} define smooth surfaces in characteristic $0$ and the automorphisms $x \leftrightarrow y$ and $z \mapsto \zeta_3 z$ make sense in characteristic $0$. Hence, these surfaces lift to characteristic $0$ as del Pezzo surfaces with an action of $2 \times 6$. As explained above, del Pezzo surfaces of degree $2$ with an automorphism group of order bigger than $18$ do not have a smooth reduction modulo $2$, hence these lifts are of Type VIII \cite[Table 8.9]{CAG}, so we also call the surfaces of Type $(2) (a) (iii)$ Type VIII. As in the previous case, the conjugacy classes are the same as in \cite[Table 7]{DM}.

As for the surfaces of Type $(1) (a) (ii)$, we rewrite their equations using the substitution $x \mapsto x + y + z$ as
$$
w^2 + (x^2 + y^2 + z^2 - yz)w + \lambda xyz(z-y) + a(y^2 + z^2 - yz)^2.
$$
This equation defines a lift of $X$ to characteristic $0$ and the $\Aut(X)$-action lifts as well, since it is generated by the Geiser involution $\gamma: w \mapsto -w$, the involution $y \leftrightarrow z$ and the automorphism $g$ of order $3$ given by
\begin{eqnarray*}
x & \mapsto & -x \\
y &\mapsto & z \\
z &\mapsto & z - y.
\end{eqnarray*}
Hence, all surfaces of Type $(1) (a) (ii)$ are reductions modulo $2$ of surfaces of Type IX in \cite[Table 8.9]{CAG}. In particular, we can read off the conjugacy classes of elements of $\Aut(X)$ from \cite[Table 7]{DM}.

The surfaces of Type $(1) (c) (i)$ are the characteristic $2$ analogues of Type X from \cite[Table 8.9]{CAG}. We claim that every involution on a surface $X$ of type $(1) (c) (i)$ which is different from the Geiser involution is of conjugacy class $3A_1/4A_1$. It suffices to check this for the surface given by
$$
w^2 + (x^2+yz)w + xy^3 + z^4,
$$
where $G(X)$ acts as $g_{\alpha}: x \mapsto x + \alpha y, z \mapsto \alpha^2 x + z$ with $\alpha^8 = \alpha$. After using the substitution $z \mapsto \alpha x + y + z, y \mapsto \alpha^6x + \alpha^6 y$, the equation of $X$ becomes
$$
w^2 + (x^2 +xy +y^2 + \alpha^6(y+x)z + \alpha^4(x^2+y^2))w + \alpha^4 (x^3y + x^2y^2 + xy^3) + \alpha^3(x^4 + y^4 + z^4)
$$
and the involution $g_{\alpha}$ acts as $x \leftrightarrow y$. Then, the above equation makes sense in characteristic $0$ and defines a lift of $X$ together with the involution $g_{\alpha}$. In particular, by \cite[Table 7]{DM}, the conjugacy class of $g_{\alpha}$ is $3A_1$ or $4A_1$.

The equations of Types $(2) (a) (ii)$ and $(3) (i)$ make sense in characteristic $0$, where they define a lift of the surface together with the $C_3$-action. These lifts must be of Type XI from \cite[Table 8.9]{CAG}.

Similarly, the equations of Types $(1) (a) (i)$ and $(2) (a) (i)$ define lifts to characteristic $0$ together with the $C_2$-action. Hence, these lifts are of Type XII from \cite[Table 8.9]{CAG}. 

We summarize the classification of automorphism groups of del Pezzo surfaces of degree $2$ in Table \ref{tbl:autodp2} in the Appendix. There, in the first column, we give the name of the corresponding family, both in the notation of Theorem \ref{thm: main2} and in the notation of \cite[Table 8.9]{CAG}. The second and third columns give the group $\Aut(X)$ and its size. In the remaining columns, we list the number of elements of a given Carter conjugacy class in $\Aut(X)$.

\section{Del Pezzo surfaces of degree $1$} \label{sec: deg1}
\subsection{The anti-bicanonical map}
As in the case of degree $2$, we start by describing the geometry of del Pezzo surfaces of degree $d = 1$ and we refer to \cite{Demazure} for characteristic-free facts on del Pezzo surfaces. Recall that the anti-bicanonical system $|-2K_X|$ defines a finite morphism $f: X \to Q$ onto a quadratic cone $Q \subseteq \mathbb{P}^3$. As in degree $2$, it turns out that this map is always separable, even in characteristic $2$.

\begin{proposition} \label{prop: separable}
The anti-bicanonical linear system $|-2K_X|$ defines a finite separable morphism $f: X \to Q$ of degree $2$.
\end{proposition}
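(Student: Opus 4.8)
The plan is to mimic the argument of \cite[Proposition 2.1]{DM}, which the authors explicitly invoke. Suppose for contradiction that $f: X \to Q$ is inseparable. Since $f$ has degree $2$ and $p = 2$, inseparability would force $f$ to be purely inseparable, i.e.\ the Frobenius-like factorization: the function field extension $\Bbbk(X)/\Bbbk(Q)$ is purely inseparable of degree $2$. The first step is to extract the geometric consequence of this: on the smooth locus, $f$ would factor (after identifying $Q$ with the image) in a way that makes $f^*$ of every differential vanish, and more usefully, it forces a relation between $K_X$ and $f^*K_Q$ (or $f^*\calO_Q(1)$) that is incompatible with the numerical geometry of a del Pezzo surface. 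Concretely, $-2K_X = f^*\calO_Q(1)$ by construction of the anti-bicanonical map onto the cone $Q \subset \bbP^3$.

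The key step is a discrepancy/degree computation. If $f$ were purely inseparable of degree $2$ in characteristic $2$, then one expects $K_X = f^*K_Q + (\text{something effective coming from the ``ramification''})$, but in the purely inseparable case the naive Riemann--Hurwitz formula is replaced by $K_X = f^*K_Q + \mathfrak{C}$ where $\mathfrak{C}$ is an effective divisor (the conductor-type term) satisfying $2\mathfrak{C} \sim (f^*(\text{something}))$; more precisely one uses that the sheaf of relative differentials $\Omega_{X/Q}$ is supported everywhere and computing $c_1$ gives a relation forcing $-K_X$ to be divisible by $2$ in $\operatorname{Pic}(X)$ up to a specified correction. Since $\operatorname{Pic}(X) \cong \bbZ^{9}$ with $K_X^2 = 1$ and $-K_X$ primitive (it is not $2$-divisible: $(-K_X/2)^2 = 1/4 \notin \bbZ$), one derives a numerical contradiction. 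I would run this by intersecting the putative relation with $-K_X$ and with a general fiber of the pencil $|-K_X|$, using $K_X \cdot (-2K_X) = -2$ versus what the inseparable relation predicts. Alternatively, and perhaps more cleanly, one uses that a purely inseparable degree-$2$ cover $X \to Q$ with $X$ smooth would force $Q$ itself to have very restricted singularities and would make $X$ (hence the rational surface) have trivial or non-reduced structure sheaf cohomology incompatible with $\chi(\calO_X) = 1$; but the $\operatorname{Pic}$-divisibility contradiction is the most robust.

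The main obstacle is handling the vertex of the cone $Q$ and the non-normality/singularity issues there: the relation $-2K_X = f^*\calO_Q(1)$ and any Riemann--Hurwitz-type formula must be interpreted carefully because $Q$ is singular at its vertex and $f$ may be ramified there. I would circumvent this by working on $X$ minus the finitely many points lying over the vertex (or by pulling back to the minimal resolution of $Q$, a Hirzebruch surface $\bbF_2$), where all the sheaves are locally free, deriving the $\operatorname{Pic}$ relation there, and then extending it across finitely many points since $\operatorname{Pic}$ of a smooth surface is unaffected by removing codimension-$2$ loci. Once the relation is valid on all of $X$ (or on the blow-up $Y$), the divisibility contradiction closes the argument. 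A secondary point to check is that degree $2$ genuinely forces \emph{purely} inseparable in the inseparable case — this is immediate since the only proper subextension is trivial, so an inseparable quadratic extension has no separable part.
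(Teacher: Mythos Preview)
Your setup is correct: a degree $2$ inseparable cover in characteristic $2$ is purely inseparable. However, the heart of your plan---deriving a $2$-divisibility constraint on $-K_X$ from a Riemann--Hurwitz-type formula---does not actually produce a contradiction. The canonical bundle relation reads $K_X = f^*K_Q + R$; since $K_Q = \mathcal{O}_Q(-4) = -2H$ and $f^*H = -2K_X$, one gets $f^*K_Q = 4K_X$ and hence $R \sim -3K_X$. Nothing here forces $-K_X$ to be $2$-divisible, and your phrase ``up to a specified correction'' absorbs exactly the term that would need to be controlled. For purely inseparable covers the relative differentials form a line bundle (the foliation direction), and the associated canonical bundle formula only determines that line bundle in terms of $K_X$ and $f^*K_Q$---it does not impose an extra numerical constraint. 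Your proposed intersection checks against $-K_X$ likewise just recover $K_X^2 = 1$, which is consistent and not contradictory.

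The argument in the paper (and in \cite{DM}, which you say you intend to mimic) is entirely different and much shorter: a purely inseparable morphism is a universal homeomorphism, hence induces an isomorphism on $\ell$-adic \'etale cohomology. But $H^2_{\et}(X,\mathbb{Z}_\ell)$ has rank $9$ (since $X$ is a blow-up of $\mathbb{P}^2$ in $8$ points) while $H^2_{\et}(Q,\mathbb{Z}_\ell)$ has rank $1$, a contradiction. If you prefer to phrase this via Picard groups rather than \'etale cohomology, the clean route is the Frobenius factorization: purely inseparable of degree $p$ gives a morphism $g\colon Q \to X$ with $f^*g^*$ and $g^*f^*$ both multiplication by $p$, so $f^*\otimes\mathbb{Q}$ is an isomorphism on rational Picard groups, again contradicting $9 \neq 1$. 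Either way, the obstruction is a rank mismatch, not a divisibility of $K_X$.
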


\begin{proof}
If $f$ is not separable, then $p = 2$ and $f$ is purely inseparable. 
But then $f$ is a homeomorphism in the \'etale topology. This is impossible, since 
$H_{\et}^2(X,\mathbb{Z}_{\ell})$ has rank $9$ (because $X$ is the blow-up of 
$8$ points in the plane), 
while $H_{\et}^2(Q,\mathbb{Z}_{\ell})$ has rank $1$.
\end{proof}

Let 
$$R(X,-K_X) = \bigoplus_{n=0}^\infty H^0(X,\calO_X(-nK_X))$$
be the graded anti-canonical ring of $X$. By the Riemann--Roch-Theorem, we have 
\begin{eqnarray*}
\dim_\Bbbk R(X,-K_X)_1 &=& 2 \\
\dim_\Bbbk R(X,-K_X)_2 &=& 4 \\
\dim_\Bbbk R(X,-K_X)_3 &=& 7.
\end{eqnarray*}
Thus, we can choose  $u,v$ from $R(X,-K_X)_1$,  $x\in R(X,-K_X)_2 \setminus
S^2(R(X,-K_X)_1)$, and 
 $y\in R(X,-K_X)_3  \setminus S^3(R(X,-K_X)_1)+R(X,-K_X)_1\otimes R(X,-K_X)_2$ and obtain the following relation between the generators
\beq\label{equation}
y^2 + y(a_1x+a_3) + x^3+a_2x^2+a_4x+a_6 = 0,
\eeq
where $a_k$ denotes a binary form of degree $k$ in  $u$ and  $v$.
In particular, via Equation \eqref{equation}, we can view $X$ as a surface of degree $6$ in 
the weighted projective space $\mathbb{P}(1,1,2,3)$, the anti-canonical map is the projection of this surface onto the $u,v$-coordinates, and the anti-bicanonical map is the projection onto the $u,v,x$-coordinates.

If $p\ne 2$, we can replace $y$ with $y+\frac{1}{2}(a_1x+a_3)$ to assume that $a_1 = a_3 = 0$. 
The surface $X$ is a double cover 
of a quadratic cone $Q\cong \bbP(1,1,2)$. The branch curve  $B = V(x^3+a_2x^2+a_4x+a_6)$ is a 
curve of degree $6$ not passing through the vertex of $Q$.  It is a smooth curve of genus $4$ with a vanishing 
theta characteristic $g_3^1$ defined by the ruling of $Q$. If we blow up the vertex of $Q$, we obtain a surface 
isomorphic to 
the rational minimal ruled surface $\mathbf{F}_2$. The preimage of the curve $B$ is a curve in the linear system 
$|6\mathfrak{f}+3\mathfrak{e}|$, where $\mathfrak{f}$ and $\mathfrak{e}$ are the
standard generators of $\Pic(\mathbf{F}_2)$ with 
$\mathfrak{f}^2 = 0$ and $\mathfrak{e}^2 = -2$. The curve $B$ is its canonical model in $\bbP^3$.

In our case, when the characteristic $p =2$, the analog of $B$ is the curve $V(a_1x+a_3)$ in $Q$. In particular, Proposition \ref{prop: separable} tells us that $a_1x + a_3 \neq 0$ and there is no way of removing these terms. Moreover, the curve $B$ always passes through the vertex of $Q$ and its strict transform on $\mathbf{F}_2$ is in $|3\mathfrak{f}|$ if $a_1 = 0$ and in $|3\mathfrak{f} + \mathfrak{e}|$ if $a_1 \neq 0$.
The analogue of the involution $y \mapsto -y$, classically called the Bertini involution, is the involution $\beta$ defined by replacing $y$ with $y+a_1x+a_3$. As in the classical case, we call this $\beta$ {\em Bertini involution}. 

By calculating the partial derivatives in Equation \eqref{equation}, the smoothness of $X$ yields the following restrictions on the $a_i$:

\begin{proposition}\label{prop: smoothness}
In \eqref{equation}, the smoothness of $X$ is equivalent to the condition that the equations
\begin{eqnarray*}
a_1x + a_3 &=& 0 \\
x^2 + a_1y + a_4 &=& 0 \\
a_{1,u}xy + a_{3,u}y + a_{2,u}x^2 + a_{4,u}x + a_{6,u} &=& 0 \\ a_{1,v}xy + a_{3,v}y + 
a_{2,v}x^2 + a_{4,v}x + a_{6,v} &=& 0
\end{eqnarray*}
with $a_{i,u} \coloneqq \frac{\partial a_i}{\partial u}$ and $a_{i,v} \coloneqq \frac{\partial a_i}{\partial v}$ have no common solutions on $X$.
\end{proposition}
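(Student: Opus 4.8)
The plan is to compute the singular locus of $X$ directly from its defining equation \eqref{equation} in the weighted projective space $\mathbb{P}(1,1,2,3)$ and to translate the Jacobian criterion into the stated four equations. First I would observe that $X$ is cut out by the single polynomial
\[
F(u,v,x,y) = y^2 + y(a_1x+a_3) + x^3 + a_2 x^2 + a_4 x + a_6
\]
on the (smooth locus of the) weighted projective space, so that, away from the singular points of $\mathbb{P}(1,1,2,3)$, a point of $X$ is singular precisely when all partial derivatives of $F$ vanish there. Since $p = 2$, the derivative $\partial F/\partial y = 2y + (a_1x+a_3)$ collapses to $a_1x + a_3$, which gives the first equation; the derivative $\partial F/\partial x = y a_1 + 3x^2 + 2a_2 x + a_4 = a_1 y + x^2 + a_4$ gives the second; and $\partial F/\partial u$, $\partial F/\partial v$, computed by the product and chain rules (using that $a_k$ is a binary form in $u,v$ so $\partial a_k/\partial u = a_{k,u}$ etc.), give the last two equations after noting that the $y^2$ and $x^3$ terms contribute nothing to the $u$- and $v$-derivatives. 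Together with $F = 0$ itself, vanishing of these four expressions is the Jacobian criterion for a singular point on $X$, so $X$ is smooth if and only if these four equations have no common solution on $X$.

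The one genuine subtlety is the three singular points of the weighted projective space $\mathbb{P}(1,1,2,3)$, namely the coordinate point $[0:0:1:0]$ of type $\frac{1}{2}(1,1)$ and $[0:0:0:1]$ of type $\frac{1}{3}(1,1,2)$ — the locus $u=v=0$ — and one must check separately that $X$ does not pass through these in a way that would force a surface singularity not detected by the naive Jacobian on the ambient smooth locus. Concretely, at $u=v=0$ all $a_k$ vanish, so \eqref{equation} degenerates to $y^2 + x^3 = 0$; the point $[0:0:1:0]$ is not on $X$, and near $[0:0:0:1]$ one works in the chart where $y = 1$ and checks that $X$ is the quotient of a smooth surface by the cyclic group action, hence at worst has a cyclic quotient singularity there — but a del Pezzo surface is by definition smooth, so this case simply does not arise, and I would phrase the proposition as the criterion for smoothness \emph{assuming} we are looking at an honest del Pezzo surface; alternatively one notes that the monomial $x^3$ (resp.\ a term like $a_6$ with $a_6 = u^6$-type leading behaviour) always appears, which keeps $X$ away from the bad strata. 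I expect the main obstacle to be exactly this bookkeeping at the non-smooth points of the weighted projective space; on the open locus $(u,v)\neq(0,0)$ the computation is the completely routine unwinding of partial derivatives in characteristic $2$ sketched above.

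A clean way to organize the argument is therefore: (1) state that on $\{(u,v)\neq(0,0)\}$ the ambient space is smooth and the Jacobian criterion applies verbatim; (2) compute $\partial F/\partial y$, $\partial F/\partial x$, $\partial F/\partial u$, $\partial F/\partial v$ in characteristic $2$ and read off the four displayed equations; (3) remark that $X \cap \{u=v=0\}$ either is empty or, by the shape of $y^2 + x^3 = 0$ versus the del Pezzo hypothesis, contributes nothing new, so smoothness of $X$ is equivalent to the four equations having no common zero on $X$. Only step (2) involves computation, and it is short because in characteristic $2$ the "square" terms $y^2$ and the "cube" term $x^3$ drop out of exactly the derivatives where they would otherwise complicate matters.
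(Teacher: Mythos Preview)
Your proposal is correct and matches the paper's approach: the paper simply states the proposition as following ``by calculating the partial derivatives'' of Equation~\eqref{equation}, i.e., exactly the Jacobian computation you outline. Your extra care about the singular points of $\mathbb{P}(1,1,2,3)$ is warranted but in fact easy here, since neither $[0{:}0{:}1{:}0]$ nor $[0{:}0{:}0{:}1]$ lies on $X$ (at $u=v=0$ the equation reduces to $y^2+x^3=0$).
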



\subsection{Normal forms} \label{sec: normalforms}
In this section, we find normal forms for del Pezzo surfaces of degree $1$ in characteristic $2$. In total, we will have $14$ different normal forms, corresponding to the $14$ possible combinations of singularities of 
the ramification curve $R$ and the branch curve $B$. 
First, we simplify the equations of the branch curve.
\begin{lemma} \label{lem: branchcurve}
Let $X$ be a del Pezzo surface of degree $1$ given by Equation \eqref{equation}. Then, after a suitable change of coordinates, we may assume that the equation $a_1x + a_3$ of $B$ is one of the following:
\begin{enumerate}
    \item  $ux + v^3$
    \item  $ux$
    \item $uv(u+v)$
    \item $u^2v$
    \item $u^3$
\end{enumerate}
\end{lemma}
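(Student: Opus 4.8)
The plan is to determine exactly which coordinate changes preserve the shape of Equation~\eqref{equation}, compute how they act on the pair $(a_1,a_3)$ cutting out $B$, and then run a two-way case split on the linear form $a_1$.

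For the first step, observe that $u,v$ may be replaced by an arbitrary basis of $R(X,-K_X)_1$, so $\mathrm{GL}_2(\Bbbk)$ acts simultaneously on the linear form $a_1$ and the cubic form $a_3$; and $x$ may be replaced by $x+q$ for an arbitrary binary quadratic form $q$, since $x$ is only determined modulo $\Bbbk^\times x + S^2(R(X,-K_X)_1)$. Substituting $x\mapsto x+q$ into~\eqref{equation} and using $\mathrm{char}\,\Bbbk = 2$, one checks that the relation keeps its form, that $a_1$ is unchanged, and that $a_3$ becomes $a_3 + a_1 q$. (Rescaling $x$, $y$ and the whole relation provides a further torus of symmetries, but it will not be needed because $\Bbbk$ is algebraically closed.) By Proposition~\ref{prop: separable}, $a_1 x + a_3 \neq 0$, so $(a_1,a_3)\neq(0,0)$.

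If $a_1 \neq 0$, choose a basis of $R(X,-K_X)_1$ beginning with $a_1$, so that $a_1 = u$. Writing the cubic as $a_3 = u\cdot b_2(u,v) + c\,v^3$ with $b_2$ a binary quadratic form and $c\in\Bbbk$, the substitution $x\mapsto x+b_2$ makes $a_3 = c\,v^3$. If $c = 0$ this is case~(2); if $c\neq 0$, a rescaling of $v$, which leaves $a_1 = u$ intact, produces case~(1). If instead $a_1 = 0$, then $a_3$ is a nonzero binary cubic form, hence a product of three linear forms over the algebraically closed field $\Bbbk$. According to whether its zero set on $\mathbb{P}^1$ consists of three distinct points, a double point and a simple point, or a single triple point, the action of $\PGL_2(\Bbbk)$ --- via its $3$-transitivity in the first case, and using $u-v = u+v$ in characteristic $2$ --- together with a suitable rescaling of $u$ and $v$ brings $a_3$ to $uv(u+v)$, to $u^2v$, or to $u^3$, i.e.\ to cases~(3), (4), and (5).

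The step that requires genuine care is the identification of the group of admissible coordinate changes and of its action on $(a_1,a_3)$: one must verify that translating $x$ by a binary quadratic $q$ fixes $a_1$ and sends $a_3$ to $a_3 + a_1 q$, and that no admissible move other than a $\mathrm{GL}_2(\Bbbk)$-substitution in $(u,v)$ alters $a_1$. Once this is granted, the remainder is the elementary normal-form classification of a nonzero binary linear form together with the binary cubic left over, and of a nonzero binary cubic, which is routine.
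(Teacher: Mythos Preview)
Your proof is correct and follows essentially the same route as the paper: case split on whether $a_1$ vanishes, normalize $a_1$ to $u$ and use $x\mapsto x+b_2$ to kill all but the $v^3$-term of $a_3$ in the first case, and classify the nonzero binary cubic $a_3$ by its root multiplicities in the second. You are simply more explicit than the paper about which coordinate changes are being used and how they act on $(a_1,a_3)$.
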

\begin{proof}
If $a_1 \neq 0$, we may assume that $a_1 = u$ after applying a linear substitution in $u$ and $v$. Then, a substitution of the form $x \mapsto x + b_2$ for a suitable binary form $b_2$ of degree $2$ in $ u$ and  $v$ allows us to set $a_3 = \lambda  v^3$. Then, rescaling  $v$, we can assume $\lambda \in \{0,1\}$.

If $a_1 = 0$, we get three cases according to the number of distinct roots of $a_3$. The equation can be normalized by applying a linear substitution in  $u$ and  $v$ to get Cases (3), (4), and (5).
\end{proof}

If we consider $\mathbb{P}(1,1,2)$ as a quadratic cone $Q$ in $\mathbb{P}^3$, these $5$ normal forms for $a_1x + a_3$ correspond to the cases where $B$ is a twisted cubic, a union of a line and a conic, a union of three lines, a union of a double line and a simple line, or a triple line, respectively.
Later, we will use automorphisms of $\mathbb{P}(1,1,2)$ that preserve the equation of $B$ and the form of Equation \eqref{equation} in order to move the images of the singular points of $R$ to special positions. In the following lemma, we describe this group of automorphisms.
\begin{lemma} \label{lem: stabilizerofB}
Let $H \subseteq \Aut(\Bbbk[u,v,x]) \subseteq \Aut(\Bbbk[u,v,x,y])$ be the subgroup of automorphisms that preserve $a_1x + a_3$, act on $x$ as $x \mapsto x + b_2$ for some binary quadratic form $b_2$ in  $u$ and  $v$, and that map Equation \eqref{equation} to one of the same form, with possibly different $a_2,a_4,$ and $a_6$. Then, $H$ consists of substitutions of the 
form
\begin{eqnarray*}
u & \mapsto & \alpha u + \beta v  \\
v & \mapsto & \gamma u + \delta v  \\
x & \mapsto & x + b_2 \\
\end{eqnarray*}
where $\alpha,\beta,\gamma,\delta \in \Bbbk$ such that $\alpha \delta + \beta \gamma \neq 0$, and
\begin{enumerate}
    \item if $a_1x + a_3 = ux +  v^3$, then $\alpha = 1$, $\beta = 0$, $\delta^3 = 1$, 
    $b_2 = \gamma^3 u^2 + \gamma^2\delta uv  + \gamma \delta^2  v^2$. \\
    In particular, $H \cong \Bbbk^+ : 3$.
    \item if $a_1x + a_3 = ux$, then $\alpha = 1$, $\beta = b_2 = 0$. \\
    In particular, $H \cong \Bbbk^+: \Bbbk^{\times}$.
    \item if $a_1x + a_3 = uv(u+v)$, then $\alpha \gamma (\alpha + \gamma) = \beta \delta (\beta+\delta) = 0$, $\alpha^2 \delta + \beta \gamma^2 = \alpha \delta^2 +\beta^2 \gamma = 1$. \\
    In particular, $H \cong \Bbbk[u,v]_2 : (3 \times \mathfrak{S}_3)$
    \item if $a_1x + a_3 =  u^2v$, then $\beta = \gamma = 0$, $\delta = \alpha^{-2}$. \\
    In particular, $H \cong \Bbbk[u,v]_2 : \Bbbk^{\times}$.
    \item if $a_1x + a_3 =  u^3$, then $\beta = 0$, $\alpha^3 = 1$. \\
    In particular, $H \cong \Bbbk[u,v]_2 : (\Bbbk : \Bbbk^{\times} \times 3)$.
\end{enumerate}
\end{lemma}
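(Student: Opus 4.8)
The plan is to parametrize the group $H$ directly. Any element of $H$ is, by hypothesis, a substitution of the form $u \mapsto \alpha u + \beta v$, $v \mapsto \gamma u + \delta v$, $x \mapsto x + b_2$, $y \mapsto y$ (the action on $y$ being forced since $H$ fixes $y$ by the ambient condition, and since it must send Equation \eqref{equation} to an equation of the same shape with the same $y^2$ and $yx$-structure up to the Bertini normalization). The $(u,v)$-substitution must be invertible, whence $\alpha\delta + \beta\gamma \neq 0$. So the entire content of the lemma is: (i) translate the constraint ``preserves $a_1x + a_3$'' into polynomial equations on $\alpha,\beta,\gamma,\delta$ and the coefficients of $b_2$, solve them, and (ii) identify the resulting group with the claimed semidirect product. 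I would organize the proof as five nearly independent computations, one per normal form, preceded by the common setup above.

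For each case I would write $\sigma(a_1 x + a_3)$ where $\sigma$ is the candidate substitution, expand, and equate with $a_1 x + a_3$ (not merely up to scalar: since $b_2$ gets absorbed into the $x$-coordinate and the equation must literally reappear, the branch form is preserved on the nose, not projectively — this is the key point that pins down, e.g., $\delta^3 = 1$ rather than $\delta^3 \in \Bbbk^\times$ in Case (1)). In Case (1), $\sigma(ux + v^3) = (\alpha u + \beta v)(x + b_2) + (\gamma u + \delta v)^3$; the coefficient of $x$ forces $\alpha u + \beta v = u$, i.e. $\alpha = 1, \beta = 0$; then matching the $v^3$-term and using $b_2$ to cancel everything else gives $\delta^3 = 1$ and the stated formula $b_2 = \gamma^3 u^2 + \gamma^2\delta uv + \gamma\delta^2 v^2$. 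The remaining cases ($a_1 = 0$) are similar but now there is no $x$-term to constrain things, so $b_2$ is only constrained by the requirement that the transformed equation still have the normalized Bertini form — in Cases (2)–(5) one finds $b_2$ is free (its coefficients span $\Bbbk[u,v]_2$, contributing the normal subgroup factor), except Case (2) where I expect $b_2 = 0$ is forced because $a_1 = u$ there and the argument of Case (1) applies. The conditions on $(\alpha,\beta,\gamma,\delta)$ come from demanding that the linear substitution permute/scale the linear factors of $a_3$ appropriately: three distinct roots (Case 3) gives the factor $\mathfrak{S}_3$ permuting $\{u, v, u+v\}$ together with a $\bbZ/3$ of scalings compatible with the cubic; a double-plus-simple root (Case 4) gives only scalings with $\delta = \alpha^{-2}$; a triple root (Case 5) gives the affine-type stabilizer $\Bbbk \rtimes \Bbbk^\times$ of the point $u=0$ together with the cube-root scaling $\alpha^3 = 1$.

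Finally, for the group-structure identifications I would exhibit, in each case, the normal subgroup $N \trianglelefteq H$ consisting of the ``unipotent'' substitutions (those acting trivially on the quotient that records the linear part modulo the constrained torus/permutation part) and a complement, then check the semidirect-product action. The normality of the $b_2$-part is clear since composing substitutions shows the $b_2$-coordinate transforms linearly under conjugation by the $(u,v)$-part. In Case (1), for instance, the subgroup $\gamma \in \Bbbk$ (with $\delta = 1$, $b_2$ as forced) is normal and isomorphic to $\Bbbk$, with $\bbZ/3\bbZ = \{\delta : \delta^3 = 1\}$ as complement — hence $H \cong \Bbbk \rtimes \bbZ/3\bbZ$. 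The main obstacle, such as it is, is purely bookkeeping: making sure in Cases (3)–(5) that one correctly disentangles the scaling part from the permutation part of the linear group and correctly reads off which scalings are allowed — the conditions like $\alpha^2\delta + \beta\gamma^2 = \alpha\delta^2 + \beta^2\gamma = 1$ in Case (3) are exactly the statement that $\sigma$ preserves the specific cubic $uv(u+v)$ and not just the set of its zeros, so I would derive them by direct expansion of $\sigma(uv(u+v))$ and term-matching rather than by an abstract argument. No step is conceptually deep; the whole lemma is a finite verification once the correct framing — literal preservation of the branch form, $b_2$ absorbed into $x$ — is in place.
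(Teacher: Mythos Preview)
Your proposal is correct, and it matches what the paper does --- which is to say, the paper gives no proof of this lemma at all; it is stated and immediately followed by the next lemma, the authors evidently regarding it as a direct verification. Your outlined computation (literal preservation of $a_1x+a_3$ under $u\mapsto\alpha u+\beta v$, $v\mapsto\gamma u+\delta v$, $x\mapsto x+b_2$, then term-matching in each of the five normal forms) is exactly the intended argument, and your observation that the branch form must be preserved on the nose rather than projectively is the key point that fixes conditions like $\delta^3=1$ and $\delta=\alpha^{-2}$.
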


For the convenience of the reader, we record the effect of a general substitution on the remaining $a_i$ in Equation \eqref{equation}. The proof is a straightforward calculation.

\begin{lemma} \label{lem: coefficientchange}
A substitution of the form 
\begin{eqnarray*}
u & \mapsto & \alpha u + \beta v  \\
v & \mapsto & \gamma u + \delta v  \\
x & \mapsto & x + b_2 \\
y & \mapsto & y + b_1x + b_3
\end{eqnarray*}
where $\alpha,\beta,\gamma,\delta \in \Bbbk$ and $b_i \in \Bbbk[u,v]_i$ such that $\alpha \delta + \beta \gamma \neq 0$, changes the coefficients $(a_2,a_4,a_6)$ in Equation \eqref{equation} as follows
\begin{eqnarray*}
a_2 &\mapsto & \sigma^* a_2 + \sigma^* a_1 b_1 + b_1^2 + b_2 \\
a_4 &\mapsto & \sigma^* a_4 + \sigma^* a_3 b_1 + \sigma^* a_1 b_1 b_2 + \sigma^* a_1b_3 + b_2^2 \\
a_6 &\mapsto & \sigma^* a_6 + \sigma^* a_4b_2 + \sigma^* a_3b_3 + \sigma^* a_2b_2^2 + \sigma^* a_1b_2b_3 + b_3^2 + b_2^3,
\end{eqnarray*}
where $\sigma^* a_i := a_i(\alpha u + \beta v,  \gamma u + \delta v)$.
\end{lemma}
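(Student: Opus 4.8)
The plan is to substitute the displayed transformation directly into the defining relation \eqref{equation} and then to re-sort the resulting polynomial by monomials in $x$ and $y$. Write $\sigma$ for the linear substitution sending $u,v$ to $\alpha u+\beta v,\ \gamma u+\delta v$, so that $\sigma^*a_i$ has the meaning given in the statement. Carrying out $x\mapsto x+b_2$ and $y\mapsto y+b_1x+b_3$, the left-hand side of \eqref{equation} becomes
\[
(y+b_1x+b_3)^2+(y+b_1x+b_3)\bigl(\sigma^*a_1(x+b_2)+\sigma^*a_3\bigr)+(x+b_2)^3+\sigma^*a_2(x+b_2)^2+\sigma^*a_4(x+b_2)+\sigma^*a_6,
\]
and the task is to rewrite this as $y^2+y(a_1'x+a_3')+x^3+a_2'x^2+a_4'x+a_6'$ and read off $a_2',a_4',a_6'$.

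The one genuinely useful input is that $p=2$, so the Frobenius is additive: $(y+b_1x+b_3)^2=y^2+b_1^2x^2+b_3^2$, $(x+b_2)^2=x^2+b_2^2$, and $(x+b_2)^3=x^3+b_2x^2+b_2^2x+b_2^3$ since the two interior binomial coefficients reduce to $3\equiv 1 \pmod{2}$. After these reductions one only has to expand the product $(y+b_1x+b_3)(\sigma^*a_1\, x+\sigma^*a_1 b_2+\sigma^*a_3)$ and collect. The coefficient of $y^2$ is $1$ and that of $x^3$ is $1$, so the relation is again of the form \eqref{equation}; the $y$-linear part is $(\sigma^*a_1)xy+(\sigma^*a_1 b_2+\sigma^*a_3)y$, which records the transformation $a_1\mapsto\sigma^*a_1$, $a_3\mapsto\sigma^*a_1 b_2+\sigma^*a_3$ of the Bertini coefficients underlying the description in Lemma \ref{lem: stabilizerofB}. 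Gathering the coefficient of $x^2$ (namely $b_1^2$ from the square, $\sigma^*a_1 b_1$ from the Bertini term, $b_2$ from the cube, and $\sigma^*a_2$), then of $x^1$, and finally the part free of $x$ and $y$, produces the three asserted identities for $a_2',a_4',a_6'$; a quick degree count confirms each $a_i'$ is a binary form of degree $i$.

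There is no real obstacle here: the argument is pure bookkeeping. The only two points that need a moment of care are (i) applying characteristic $2$ consistently, so that every cross term of a square and every coefficient $3$ disappears, and (ii) sorting the mixed products $b_1\cdot\sigma^*a_1 b_2$, $b_1\cdot\sigma^*a_3$, $\sigma^*a_1\cdot b_3$, $\sigma^*a_2\cdot b_2^2$, etc., into the correct monomial in $x$. One could instead decompose the substitution as the composition of the three elementary steps $\sigma$, then $x\mapsto x+b_2$, then $y\mapsto y+b_1x+b_3$, compute the effect of each step separately, and compose; but the direct expansion above is short enough that this adds little.
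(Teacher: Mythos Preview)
Your proposal is correct and matches the paper's approach exactly: the paper records this lemma with the one-line remark that ``the proof is a straightforward calculation,'' and your direct substitution and collection of terms in characteristic $2$ is precisely that calculation. One tiny wording slip: in your point (i) you write that ``every coefficient $3$ disappears,'' but as you yourself noted earlier, $3\equiv 1\pmod 2$, so the interior terms of $(x+b_2)^3$ survive with coefficient $1$; your actual expansion is correct, so this does not affect the argument.
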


Now, we are ready to describe the normal forms for del Pezzo surfaces of degree $1$.

\begin{theorem}\label{thm: canform} Every del Pezzo surface of degree $1$ in characteristic $2$ 
is a  surface of degree $6$ in $\mathbb{P}(1,1,2,3)$ given by an equation of the form
\beq\label{equation2}
y^2 + y(a_1(u,v)x+a_3(u,v)) + x^3+ a_2(u,v)x^2 + a_4(u,v)x+a_6(u,v) = 0,
\eeq
where $(a_1,a_2,a_3,a_4,a_6)$ is one of the following: 
\pagebreak
 $$
  \resizebox{\textwidth}{!}{
  $
    \begin{array}{|c|c|c|c|c|c|} \hline
    \text{Name}  & a_1x + a_3 & a_2 & a_4 & a_6 & \# \text{Parameters}  \\ \hline \hline
    (1) (a) & ux +  v^3 & a v^2 & b u^4 + c u^2 v^2 + d v^4 & e u^6 + f u^4 v^2 + g u^2 v^4 + h v^6 & 8 \\
    (1) (b) & ux +  v^3 & a v^2 & c u^2 v^2 + d v^4 & e u^6 + f u^4 v^2 + g u^2 v^4 + h v^6 & 7 \\
    (1) (c) & ux +  v^3 & a v^2 & d v^4 & e u^6 + f u^4 v^2 + g u^2 v^4 + h v^6 & 6 \\
    (1) (d) & ux +  v^3 & a v^2 & c u^2 v^2 & e u^6 + f u^4 v^2 + g u^2 v^4 + h v^6 & 6 \\
    (1) (e) & ux +  v^3 & a v^2 & 0 & e u^6 + f u^4 v^2 + g u^2 v^4 + h v^6 & 5 \\
    (2) (a) & ux& a v^2 &  v^4 & b u^6 + d u^4 v^2 + e u^3 v^3 + f u^2 v^4 + g u v^5 + h v^6 & 7 \\
    (2) (b) & ux& a v^2 &  v^4 & b u^6 + d u^4 v^2 + f u^2 v^4 + g u v^5 + h v^6  & 6 \\  (2) (c) & ux& a v^2 &  v^4 & b u^6 + d u^4 v^2 + e u^3 v^3 + f u^2 v^4 + h v^6  & 6 \\
    (2) (d) & ux& a v^2 &  v^4 & c u^5v+ d u^4 v^2  + f u^2 v^4 + h v^6 & 5 \\
    (2) (e) & ux& a v^2 & 0 & b u^6 + d u^4 v^2 + e u^3 v^3 + f u^2 v^4 + eu v^5 + h v^6 & 6 \\
    (2) (f) & ux& a v^2 & 0 & b u^6 + d u^4 v^2 + f u^2 v^4 + uv^5 + h v^6 & 5 \\
    (3) \phantom{(f)} & uv(u+v) & auv & b u^3v + (b+c) u^2 v^2 + cu v^3 & d u^5v+ e u^3 v^3 + fu v^5 & 6 \\
    (4) \phantom{(f)} &  u^2v & 0 & a u^3v + b u^2 v^2 + cuv^3 & d u^5v+ e u^3 v^3 + uv^5 & 5 \\
    (5) \phantom{(f)} &  u^3 & 0 & a u^3v + b u^2 v^2 + cuv^3 & uv^5 + d v^6 & 4 \\ \hline 
    \end{array}
    $
    }
   $$
Moreover, the parameters satisfy the conditions summarized in the following table, where $$\Delta \coloneqq a_3^4 + a_1^3a_3^3 + a_1^4(a_4^2 + a_1a_3a_4 + a_2a_3^2 + a_1^2a_6).$$ In this table, we also describe the singularities of the irreducible components of the reduction $R_{\red}$ of the ramification curve $R$.
 $$
  \resizebox{\textwidth}{!}{
  $
    \begin{array}{|c|c|c|} \hline
    \text{Name}  & \text{Conditions on the parameters} & \text{Singularities of the irreducible components of } R_{\red}  \\ \hline \hline 
   \multirow{2}{*}{$ (1) (a) $}& \Delta \text{ has only simple roots} & \multirow{2}{*}{$ A_2 \text{ over } [1:v:v^3] \text{ with }v^8 + dv^6 + cv^4 + bv^2 = 0$} \\
        &  v^8 + dv^6 + cv^4 + bv^2 \text{ has four distinct roots} &  \\ \hline 
   \multirow{2}{*}{$ (1) (b) $} & \multirow{2}{*}{$\Delta \text{ has only simple roots}, c,d \neq 0 $}  & A_4 \text{ over } [1:0:0] \\
    & &  2 A_2 \text{ over } [1:v:v^3] \text{ with } v^4 + dv^2 + c = 0 \\ \hline 
   \multirow{2}{*}{$ (1) (c) $}& \multirow{2}{*}{$\Delta \text{ has only simple roots}, d \neq 0 $} &  A_6 \text{ over } [1:0:0]\\
    & & A_2 \text{ over } [1:d^{\frac{1}{2}}:d^{\frac{3}{2}}] \\ \hline 
    (1) (d) &   \Delta \text{ has only simple roots}, c \neq 0 & 2A_4 \text{ over } [1:0:0]  \text{ and } [1:c^{\frac{1}{4}}:c^{\frac{3}{4}}]\\ \hline
    (1) (e) & e \neq 0 & A_8 \text{ over } [1:0:0] \\ \hline 
    (2) (a) & u^{-4} \Delta \text{ has only simple roots},e,g,(g^2 + a + h)\neq 0  &  3A_2 \text{ over } [0:1:1],[1:0:0]  \text{ and } [g^{\frac{1}{2}}:e^{\frac{1}{2}}:0]  \\ \hline 
    \multirow{2}{*}{$ (2) (b) $}& \multirow{2}{*}{$ b,g,(g^2 + a + h) \neq 0 $} & A_4 \text{ over } [1:0:0] \\
    && A_2 \text{ over } [0:1:1] \\ \hline 
    (2) (c) & b,e,(a+h) \neq 0 &  3 A_2 \text{ over } [0:1:1],[1:0:0] \text{ and } [0:1:0] \\ \hline 
    \multirow{2}{*}{$(2) (d) $}& \multirow{2}{*}{$ c,(a+h) \neq 0 $} & A_4 \text{ over } [0:1:0] \\
     & & A_2 \text{ over } [0:1:1] \\ \hline 
    (2) (e) & u^{-6} \Delta \text{ has only simple roots}, e \neq 0 & 3A_2 \text{ over } [0:1:0],[1:0:0] \text{ and } [1:1:0] \\ \hline 
    \multirow{2}{*}{$(2) (f) $} & \multirow{2}{*}{$u^{-6} \Delta \text{ has only simple roots} $}& A_4 \text{ over } [1:0:0] \\
    && A_2 \text{ over } [0:1:0] \\ \hline 
    (3) \phantom{(f)} & d,f \neq 0, (d+e+f) \not \in \{0,1\} & 3A_2 \text{ over } [1:0:0],[0:1:0] \text{ and } [1:1:0] \\ \hline 
    (4) \phantom{(f)}& d \neq 0 & 2A_2 \text{ over } [1.0:0] \text{ and } [0:1:0] \\ \hline 
    (5)\phantom{(f)} & - & A_2 \text{ over } [0:1:0] \\ \hline  
    \end{array}
    $
    }
   $$
\end{theorem}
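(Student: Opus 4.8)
The plan is to run the classification case by case over the five possibilities for the branch equation $a_1x+a_3$ furnished by Lemma~\ref{lem: branchcurve}. Having fixed one of these five forms, the relevant coordinate changes at our disposal are the elements of the group $H$ of Lemma~\ref{lem: stabilizerofB} together with the \emph{Bertini-type substitutions} $y\mapsto y+b_1x+b_3$, $b_i\in\Bbbk[u,v]_i$; a short computation (the case $\sigma=\mathrm{id}$, $b_2=0$ of Lemma~\ref{lem: coefficientchange}) shows the latter leave $a_1,a_3$ untouched and act by $a_2\mapsto a_2+a_1b_1+b_1^2$, $a_4\mapsto a_4+a_3b_1+a_1b_3$, $a_6\mapsto a_6+a_3b_3+b_3^2$, while Lemma~\ref{lem: coefficientchange} gives the full transformation law for the combined group. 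So in each of the five cases the task is to choose $\sigma,b_1,b_2,b_3$ so as to eliminate as many monomials of $(a_2,a_4,a_6)$ as possible; the monomials that survive are exactly those displayed in the first table.

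To decide which monomials survive, and to split each branch case into the sub-cases $(a),(b),\dots$, I use the ramification curve $R$. Writing Equation~\eqref{equation} as the Weierstrass-type equation $y^2+(a_1x+a_3)y=x^3+a_2x^2+a_4x+a_6$ of the genus-$1$ fibration obtained by blowing up the base point of $|-K_X|$, the quantity $\Delta=a_3^4+a_1^3a_3^3+a_1^4(a_4^2+a_1a_3a_4+a_2a_3^2+a_1^2a_6)$ is precisely the discriminant of this fibration (the usual Weierstrass discriminant, computed in characteristic $2$), so its zeros are the points of $\bbP^1$ over which the fibration is singular; together with the singular points of $B$ they govern where $R$ — the fixed locus of the Bertini involution $\beta$ — fails to be smooth. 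For each branch form one first reads off from Proposition~\ref{prop: smoothness} the possible positions on $Q$ of the singular points of $R$, and then uses the continuous part of $H$ to move them to the standard locations in the second table ($[1:0:0]$, $[0:1:0]$, $[1:1:0]$, $[0:1:1]$, or points of the twisted cubic $[1:v:v^3]$). The sub-cases are exactly the distinct $H$-orbit configurations of these points — how many of them coincide or land in an $H$-special position — and once they are pinned down, the conditions that $R$ be singular there and smooth elsewhere, together with the residual choices of $b_1,b_2,b_3$, force $(a_2,a_4,a_6)$ into the tabulated shape; the last column of the first table is then a bookkeeping check against the dimension of the stratum so obtained.

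It then remains to read off the local analytic type of $R$ at each of the standard points: computing the local equation and its $\delta$-invariant (equivalently, its Milnor number in these cuspidal/additive situations) identifies it as one of $A_2,A_4,A_6,A_8$, and the non-vanishing conditions in the middle column of the second table ($c,d\neq0$; $e,g,g^2+a+h\neq0$; and so on) are exactly the conditions that, by Proposition~\ref{prop: smoothness}, $X$ is a smooth del Pezzo surface of degree $1$ and, simultaneously, that the singularity of $R$ at that point is $A_{2k}$ and no worse; where it appears, the hypothesis that the relevant dehomogenization of $\Delta$ (or of $v^8+dv^6+cv^4+bv^2$) have only simple roots plays the same role at the remaining, ``generic'' singular points. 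That the arithmetic genus of $R$ equals $4$ gives a further check: in the cases where $R$ is reduced it forces $\sum_P\delta_P(R)=4$, and in general it constrains which singularity combinations can occur, confirming that the list of fourteen cases is complete.

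The main obstacle is the normalization step carried out uniformly: performing, in all five branch-curve cases at once, the combined normalization of $a_2,a_4,a_6$ under $H$ together with the Bertini substitutions, and keeping exact track of which monomials can be cleared and in which order — the normalizations of $a_4$ and $a_6$ are coupled through the terms $a_1b_3$ and $b_3^2$, so one cannot simply clear $a_4$ first — and then checking that the fourteen resulting normal forms are genuinely distinct and that none has been overlooked. This is a finite but intricate bookkeeping problem; the discriminant $\Delta$ and the arithmetic genus identity for $R$ are the two organizing principles that keep it manageable.
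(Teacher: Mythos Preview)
Your proposal is correct and follows essentially the same strategy as the paper: fix one of the five branch forms from Lemma~\ref{lem: branchcurve}, use the group $H$ of Lemma~\ref{lem: stabilizerofB} together with the Bertini-type substitutions to normalize $(a_2,a_4,a_6)$, and organize the sub-cases by the configuration of singular points of $R$ (in Case~(1) these are precisely the roots of $F'$, where $F$ is the affine equation of $R$ on $u=1$), with Proposition~\ref{prop: smoothness} supplying the smoothness constraints. The paper then simply carries out the explicit substitutions case by case; your added observation that $p_a(R)=4$ gives a $\sum_P\delta_P$ check is a nice consistency test the paper does not make explicit, though be careful not to conflate the zeros of $\Delta$ (singular fibers of $\phi$) with the singular points of $R$ --- they are linked (in Case~(1), $F'$ is literally $\partial_v\Delta(1,v)$) but play distinct roles.
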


\smallskip

\begin{remark}
The conditions on the parameters that guarantee the smoothness of $X$ are equivalent to the conditions that Equation \eqref{equation2} is the 
Weierstrass equation of an elliptic fibration with only irreducible fibers. We will study this fibration later in Section \ref{sec: strangefibration}. There, the reader can also find an explicit formula for the homogeneous polynomial $\Delta$, which is the discriminant of this fibration.
\end{remark}

\begin{proof}[Proof of Theorem \ref{thm: canform}] By Lemma \ref{lem: branchcurve}, there are, up to choice of coordinates, $5$ possible equations for $B$. We will now give normal forms in each case. 

\smallskip
\noindent \underline{(1) $a_1x + a_3 = ux + v^3$}
\smallskip

\noindent Here, the ramification curve $R$ is given by the two equations
\begin{eqnarray*}
ux + v^3 &=& 0 \\
y^2 + x^3 + a_2x^2 + a_4x + a_6 &=& 0.
\end{eqnarray*}
One checks that the curve $R$ is smooth at the points with $u = 0$. On the affine chart $u = 1$, it is given in $\mathbb{A}^2$ by the single equation
$$
y^2 + v^9 + a_2(1,v)v^6 + a_4(1,v)v^3 + a_6(1,v),
$$
so it has singularities over the roots of the derivative $F'$ of $F := v^9 + a_2(1,v)v^6 + a_4(1,v)v^3 + a_6(1,v)$. After applying an element of $H$ in Lemma \ref{lem: stabilizerofB}, we may assume that $0$ is the root of highest multiplicity of $F'$.

Now, substitutions as in Lemma \ref{lem: coefficientchange} that fix $u,v,$ and $x$ do not change the location of the points that lie under singularities of $R$ and thus, by Lemma \ref{lem: coefficientchange}, we can assume that $a_2 = av^2$, $a_4 = bu^4 + cu^2v^2 + dv^4$, $a_6 = eu^6 + fu^4v^2 + gu^2v^4 + hv^6$.
With this notation, the polynomial $F'$ becomes $v^8 + dv^6 + cv^4 + bv^2$ and the conditions of Proposition \ref{prop: smoothness} boil down to $v^8 + dv^6 + cv^4 + bv^2$ and $\Delta(1,v) = v^{12} + v^9 + (d^2 + a)v^8 + dv^7 + hv^6 + cv^5 + (c^2 + g)v^4 + bv^3 + fv^2 + b^2 + e$ not having a common solution. The former is the derivative of the latter, hence we want that the latter has only simple zeroes. 

Now, if $F'$ has four distinct roots, we are in Case (a). If $F'$ has less than four distinct roots, we may assume $b = 0$. If $F'$ has exactly three roots, then we are in Case (b). If $b = 0$, the polynomial $F'$ has exactly two roots if and only if either $c = 0$ and $d \neq 0$, which is Case (c), or $d = 0$ and $c \neq 0$, which is Case (d). Finally, $F'$ has a single root if and only if $b = c = d = 0$, which is Case (e). 

\smallskip
\noindent  \underline{(2) $a_1x + a_3 = ux$}
\smallskip

\noindent  Here, the ramification curve has two components $R_1$ and $R_2$. The curve $R_1$ is given by
\begin{eqnarray*}
u &=& 0 \\
y^2 + x^3 + a_2x^2 + a_4x + a_6 &=& 0.
\end{eqnarray*}
This curve has a unique singularity, which is of type $A_2$ and located over $[0:1:a_4(0,1)^{\frac{1}{2}}]$. Rescaling $v$, we may assume that $a_4(0,1) \in \{0,1\}$.

The curve $R_2$ is given by
\begin{eqnarray*}
x &=& 0 \\
y^2 + a_6 &=& 0.
\end{eqnarray*}
This curve has singularities over the points $[u:v:0]$ where the derivatives of $a_6$ by $u$ and $v$ both vanish.

First, assume that $a_4(0,1) = 1$ and one of the singularities of $R_2$ does not lie over $[0:1:0]$. Then, using a substitution in $v$ as in Lemma \ref{lem: stabilizerofB}, we can assume that one of them lies over $[1:0:0]$. Substitutions as in Lemma \ref{lem: coefficientchange} which fix $u,v,$ and $x$ do not change the location of these points and, after applying one of them, we may assume that $a_2 = av^2$, $a_4 = v^4$, and $a_6 = bu^6 + du^4v^2 + eu^3v^3 + fu^2v^4 + guv^5 + hv^6$. If $e,g \neq 0$, this is Case (a), if $e = 0$ and $g \neq 0$, this is Case (b), and if $e \neq 0$ and $g = 0$, this is Case (c). The conditions of Proposition \ref{prop: smoothness} boil down to $\Delta(1,v) = v^8 + hv^6 + gv^5 + fv^4 + ev^3 + dv^2 + b$ having only simple roots and $g^2 \neq a + h$. In particular, $(e,g) \neq (0,0)$.

If $a_4(0,1) = 1$, $R_2$ has a unique singularity, and this singularity lies over $[0:1:0]$, then the only odd monomial in $a_6$ is $u^5v$. A substitution of the form $v \mapsto v + \mu u$ and substitutions as in the previous paragraph allow us to assume that $a_2 = av^2$, $a_4 = v^4$, and $a_6 = cu^5v+ du^4v^2 + fu^2v^4 + hv^6$. The conditions of Proposition \ref{prop: smoothness} become $a + h \neq 0$ and $c \neq 0$. This is Case (d).

If $a_4(0,1) = 0$, then Proposition \ref{prop: smoothness} implies that $R_2$ is smooth over $[0:1:0]$. Hence, we can assume that one of the singularities of $R_2$ lies over $[1:0:0]$. Using a substitution as in Lemma \ref{lem: coefficientchange} which fixes $u,v$ and $x$, we may assume that $a_2 = av^2$, $a_4 = 0$, and $a_6 = bu^6 + du^4v^2 + eu^3v^3 + fu^2v^4 + guv^5 + hv^6$. Since $R_2$ is smooth over $[0:1:0]$, we have $g \neq 0$.
If $e \neq 0$, we can scale $v$ so that $g = e$. This is Case (e). If $e = 0$, we scale $v$ so that $g = 1$. This is Case (f). 

\smallskip
\noindent  \underline{(3) $a_1x + a_3 = uv(u+v)$}
\smallskip

\noindent  The curve $B$ has the three irreducible components $B_1,B_2,$ and $B_3$, given by $V(u),V(v),$ and $V(u+v)$, respectively. The corresponding components $R_1,R_2$, and $R_3$ of $R$ are given by
\begin{eqnarray*}
y^2 + x^3 + a_2(0,v)x^2 + a_4(0,v)x + a_6(0,v), & \\
y^2 + x^3 + a_2(u,0)x^2 + a_4(u,0)x + a_6(u,0), & \text{ and } \\
y^2 + x^3 + a_2(u,u)x^2 + a_4(u,u)x + a_6(u,u), &
\end{eqnarray*}
respectively. The singular points of $R_1,R_2$, and $R_3$ lie over $[0:1:a_4(0,1)^{\frac{1}{2}}]$, $[1:0:a_4(1,0)^{\frac{1}{2}}]$, and $[1:1:a_4(1,1)^{\frac{1}{2}}]$, respectively.

A substitution as in Lemma \ref{lem: stabilizerofB} which fixes $u$ and $v$ allows us to set $a_4(0,1) = a_4(1,0) = a_4(1,1) = 0$, that is, that $a_4 = bu^3v+ (b+c)u^2v^2 + cuv^3$ for some $b,c \in \Bbbk$. 
Then, a substitution as in Lemma \ref{lem: coefficientchange} which fixes $u,v,$ and $x$ allows us to set $a_2 = auv$ and $a_6 = du^5v+ eu^3v^3 + fuv^5$. The conditions of Proposition \ref{prop: smoothness} become $d \neq 0, f \neq 0$ and $d+e+f \not \in \{0,1\}$.

\smallskip
\noindent  \underline{(4) $a_1x + a_3 = u^2v$}
\smallskip

\noindent  The curve $B$ has two irreducible components $B_1$ and $B_2$, given by $V(u)$ and $V(v),$ respectively. The corresponding components $R_1$ and $R_2$ of $R$ are given by
\begin{eqnarray*}
y^2 + x^3 + a_2(0,v)x^2 + a_4(0,v)x + a_6(0,v), & \text{ and }\\
y^2 + x^3 + a_2(u,0)x^2 + a_4(u,0)x + a_6(u,0), &
\end{eqnarray*}
respectively. The singular points of $R_1$ and $R_2$ lie over $[0:1:a_4(0,1)^{\frac{1}{2}}]$ and $[1:0:a_4(1,0)^{\frac{1}{2}}]$, respectively.

A substitution as in Lemma \ref{lem: coefficientchange} which fixes $u$ and $v$ allows us to set $a_4(0,1) = a_4(1,0)$ and that $a_2$ is a square. Then, a substitution with $b_2 = b_3 = 0$ allows us to eliminate $a_2$. Finally, a substitution with $b_1 = b_2 = 0$ allows us to assume that $a_6$ contains no squares. If we write $a_6 = du^5v+ eu^3v^3 + fuv^5$, then the 
conditions of Proposition \ref{prop: smoothness} becomes $d \neq 0$ and $f \neq 0$, and we can rescale $f$ to $1$.

\smallskip
\noindent  \underline{(5) $a_1x + a_3 = u^3$}
\smallskip

\noindent  The curve $R$ is given by
$$
y^2 + x^3 + a_2(0,v)x^2 + a_4(0,v)x + a_6(0,v)
$$
and it is singular over $[0:1:a_4(0,1)^{\frac{1}{2}}]$.

We apply the same substitutions as in the previous case to remove $a_2$. 
Then, we apply a substitution as in Lemma \ref{lem: coefficientchange} with $b_2 = b_1^2$
to remove the $v^4$-term in $a_4$.
Next, using a substitution that fixes $u,v,$ and $x$ with $b_1 = 0$, we eliminate the squares in $a_6$, write $a_6 = du^5v+ eu^3v^3 + fuv^5$, and rescale $f$ to $1$. 
After that, a substitution of the form $v \mapsto v + \lambda u$, and eliminating the square again, allows us to set $d = 0$.
Next, a substitution as in Lemma \ref{lem: coefficientchange} which fixes $u$ and $v$, with $b_1 = \lambda u$, $b_2 = \lambda^2 u^2$, and $b_3 = \mu u^3$ for suitable $\lambda$ and $\mu$ allows us to eliminate the $u^4$-term in $a_4$ without changing $a_6$. Finally, we apply a substitution with $b_3 = ev^3$ and rename the parameters to assume that $a_6 = uv^5 + dv^6$. The conditions of Proposition \ref{prop: smoothness} are fulfilled for every choice of parameters. 
\end{proof}

\subsection{Fake tritangent planes and odd theta characteristics} \label{sec: fakebitangent}
It is known that a del Pezzo surface $X$ of degree $1$ contains $240$ $(-1)$-curves (see \cite[8.7]{CAG}, 
where the proof is characteristic free).   
They come in pairs $E_i+E_i' \in |-2K_X|$ with $E_i\cdot E_i' = 3$. The 
Bertini involution $\beta$ swaps the two curves in a pair.
The image of $E_i+E_i'$ under the anti-bicanonical map $f$ is a plane section of $Q$ not passing through the vertex. 

If $p \neq 2$, each of the resulting $120$ planes is a tritangent plane to the branch sextic curve and, 
conversely, every tritangent plane to the branch sextic gives rise to a pair of $(-1)$-curves $E_i+E_i'$ with 
$E_i+E_i'\in |-2K_X|$. 
A tritangent plane intersects the branch curve in twice a positive divisor of degree $3$. This divisor is an odd theta characteristic of the curve. It is known that the number of odd theta characteristics on a smooth 
curve of genus $4$ is equal to $120$.

For arbitrary $p$, we still have the following.

\begin{lemma}\label{lm:4.1} The preimage $f^{-1}(C)$ of an integral conic $C = V(x+b_2)$ 
is a sum of two $(-1)$-curves if and only if it is reducible.
\end{lemma}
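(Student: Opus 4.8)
The plan is to show both implications of the biconditional, with the nontrivial direction being: if $f^{-1}(C)$ is reducible, then it splits as a sum of two $(-1)$-curves. The reverse direction is immediate, since a sum of two distinct $(-1)$-curves is by definition reducible (and we may note $E\cdot E'=3>0$ forces them to be distinct when $C$ is integral).

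For the forward direction, I would first record that $f^{-1}(C)\in|-2K_X|$, since $C$ is a plane section of $Q$ avoiding the vertex and $f$ is the anti-bicanonical morphism; in particular $f^{-1}(C)$ is a curve with $(f^{-1}(C))^2 = 4K_X^2 = 4$ and arithmetic genus $p_a = 3$ by adjunction. The cover $f$ is separable of degree $2$ (Proposition \ref{prop: separable}), so write $f^{-1}(C) = D + D'$ with $D' = \beta(D)$, where $\beta$ is the Bertini involution; here $D$ is the component we are assuming exists and $D'$ its image under the deck transformation. Since $C$ is integral, $f|_D\colon D\to C$ is either birational or $f^{-1}(C) = 2D$ is non-reduced; but the latter would force $C$ to lie in the branch locus, and the branch curve $B$ has degree $3$ on $Q$ while $C$ has degree $2$, so $C\not\subseteq B$ — hence $f|_D$ and $f|_{D'}$ are birational onto $C$, and $D\neq D'$. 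Thus $D\cdot D' = (D+D')^2/2 - D^2$; combined with $D+D'\equiv -2K_X$ and $D'=\beta(D)$ (so $D^2 = D'^2$ and $K_X\cdot D = K_X\cdot D'= -1$), one computes $D^2 + D\cdot D' = 2$ and $D^2 = D'^2$. The remaining task is to pin down $(D^2, D\cdot D')$ among the possibilities $(-1,3),(0,2),(1,1)$: I would rule out the last two by a genus/irreducibility argument — $D$ is an integral curve (being the birational image-inverse of the integral curve $C$ under a finite morphism, $D$ is integral) with $K_X\cdot D = -1$, and arithmetic genus $p_a(D) = 1 + \tfrac12(D^2 + K_X\cdot D) = \tfrac12(D^2+1)\cdot$, which is a non-negative integer only when $D^2$ is odd, forcing $D^2 = -1$ (as $D^2 \le 1$ from $D^2+D\cdot D'=2$ with $D\cdot D'\ge 0$, the intersection being proper). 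Then $p_a(D) = 0$, so $D\cong\mathbb{P}^1$, and $D^2 = -1$ with $K_X\cdot D = -1$ exhibits $D$ as a $(-1)$-curve; symmetrically for $D'$.

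The step I expect to be the main obstacle is justifying that $D$ (and $D'$) is a reduced and irreducible curve rather than merely an effective divisor — i.e. that the hypothesis "$f^{-1}(C)$ is reducible" really produces an honest integral component on which $f$ restricts finitely to $C$. This requires using that $C$ is integral and $f$ finite of degree $2$: any proper nonzero subdivisor of $f^{-1}(C)$ must map onto $C$ (its image is closed, irreducible if we pick a component, and $f$ is finite so the image is a curve, necessarily $C$), which then forces the complementary divisor to be $\beta$ of it by the Galois description of the separable degree-$2$ cover $f$. Once the decomposition $f^{-1}(C) = D + \beta(D)$ with $D$ integral is in hand, the numerical computation above is routine. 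I would also double-check the edge case where $D$ could a priori equal a $\beta$-fixed curve lying over $C$; but $\beta$-fixed curves lie in the ramification locus $R$, and $f(R) = B$ has degree $3\neq 2 = \deg C$, excluding this.
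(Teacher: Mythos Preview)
Your overall strategy matches the paper's: decompose $f^{-1}(C) = D + D'$ with $D' = \beta(D)$, use $D+D' \sim -2K_X$ and $D^2 = D'^2$ to get $D^2 + D\cdot D' = 2$ and $K_X \cdot D = -1$, then invoke adjunction. The paper is terser --- it simply asserts $L_1\cdot L_2 = 3$ and deduces $L_1^2 = -1$ immediately --- whereas you try to pin down the intersection numbers from scratch.

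There is, however, a genuine gap in your numerical step. The parity constraint $p_a(D) = \tfrac12(D^2+1) \in \bbZ_{\ge 0}$, together with $D^2 \le 2$, only yields $D^2 \in \{-1,1\}$; it does \emph{not} force $D^2 = -1$. The case $(D^2, D\cdot D') = (1,1)$ with $p_a(D) = 1$ survives your argument as written. Fortunately, you already have the tool to close this: you noted that $f|_D \colon D \to C$ is finite and birational, and $C = V(x+b_2)$ avoids the vertex of $Q$, so $C \cong \bbP^1$ is smooth. A finite birational morphism onto a normal curve is an isomorphism, hence $D \cong \bbP^1$, $p_a(D) = 0$, and $D^2 = -1$. (Alternatively: $D^2 = 1$ with $-K_X\cdot D = 1$ forces $D \sim -K_X$ by the Hodge index theorem, so $D$ passes through the base point $p_0$ of $|-K_X|$; but $f(p_0)$ is the vertex of $Q$, which does not lie on $C$.)

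A minor side remark: your degree argument for $C \not\subseteq B$ is not quite airtight, since in Case~(2) the branch divisor $B = V(ux)$ is reducible and contains the conic $V(x)$. This does not actually damage the proof --- in that situation $f^{-1}(V(x))$ turns out to be the irreducible component $R_2$ of $R$, so the hypothesis of the lemma fails anyway --- but the sentence should be rephrased.
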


\begin{proof} Since $f$ has degree $2$ and $C$ is integral, the curve $f^{-1}(C)$ is reducible 
if and only if it has two irreducible components $L_1$ and $L_2$. These components satisfy 
$L_1+L_2\in |-2K_X|$, $L_1\cdot L_2 = 3$, and $L_1^2 = L_2^2$. Via adjunction, this easily implies that 
$L_1$ and $L_2$ are $(-1)$-curves. The converse is clear.
\end{proof}

So, even if $p = 2$, we have $120$ splitting conics and we call the corresponding planes in $\mathbb{P}^3$ \emph{fake tritangent planes} 
in analogy with the situation in the other characteristics. For the rest of this section, we assume $p = 2$. 

Since the anti-bicanonical map is 
\'etale outside the branch curve $V(A)$, the intersection  $E_i\cap E_i'$ lies on the ramification curve 
$R$.   Let 
$\calL = \calO_R(E_i) \cong \calO_R(E_i')$. It is an invertible sheaf on $C$ of degree 2. 
We have 
$$\calL^{\otimes 2}\cong \calO_R(E_i+E_i')\cong \calO_R(-2K_X).$$
Since $B\in |\calO_{\bbP(1,1,2)}(3)|$, we have $R\in |-3K_X|$. By the adjunction formula, we have
$$\omega_R \cong \calO_R(-3K_X+K_X) \cong \calL^{\otimes 2}.$$
As in the case of degree $2$,
invertible sheaves on $R$ that satisfy this property are called invertible theta characteristics.
Let $\Theta(R)$ be the set of isomorphism classes of such invertible theta characteristics on $R$ and let $J(R)$ be the generalized Jacobian of $R$. As in Lemma \ref{lem: generalizedjacobian}, one can prove that $J(R)$ is a product of additive groups.
\begin{lemma}
The generalized Jacobian $J(R)$ of $R$ is isomorphic to $\mathbb{G}_a^4$.
\end{lemma}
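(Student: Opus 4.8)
The plan is to compute $J(R)$ directly from the explicit description of the ramification curve $R$ and its components, which we have in each of the $14$ cases of Theorem \ref{thm: canform}. Recall that $R$ has arithmetic genus $4$, so $J(R) = \operatorname{Pic}^0(R)$ is a connected commutative algebraic group of dimension $4$. Since $R$ is a projective (possibly reducible, possibly non-reduced) curve, $J(R)$ is an extension of an abelian variety $A$ by an affine group $T \times U$, where $T$ is a torus and $U$ is unipotent; the abelian part $A$ is the product of the Jacobians of the normalizations of the reduced irreducible components, the toric part $T$ comes from loops in the dual graph of $R_{\red}$ and from the way components meet, and the unipotent part $U$ measures the non-reducedness and the local singularities (cusps, tacnodes, etc.) of $R$ together with non-nodal gluing of components. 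The claim is that in every case $A = 0$ and $T = 0$, so that $J(R) \cong \mathbb{G}_a^4 = U$.

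The key steps, carried out uniformly over the cases, are as follows. First I would observe that every irreducible component of $R_{\red}$ is a \emph{rational} curve: in each case the component is, via the projection to the $(u,v)$-line, a double cover of $\mathbb{P}^1$ branched (in characteristic $2$) at a single point, hence of geometric genus $0$ — indeed the singularities listed in the table ($A_2$, $A_4$, $A_6$, $A_8$, i.e.\ cusps $y^2 = t^{2k+1}$) are exactly the cusps that force $p_a$ of the component down to the arithmetic genus contributed by a smooth rational curve plus a purely unipotent contribution $\delta = k$. So the abelian part of $J$ of each component vanishes, and hence $A = 0$. Second, I would check that the \emph{dual graph} of $R_{\red}$ is a tree in every case, so there is no toric contribution from global loops: in cases (2)–(5) the components are lines/conics on the cone meeting pairwise in at most one point of $Q$, and one reads off from the list that the configuration is a chain or a star — never a cycle; in case (1), $R$ is irreducible. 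Third, I would check that wherever two components of $R_{\red}$ meet, or wherever $R$ is non-reduced, the local contribution to $J$ is unipotent, not toric; the only way to get a $\mathbb{G}_m$ is a node (two smooth branches meeting transversally, i.e.\ an $A_1$), and one verifies from Proposition \ref{prop: smoothness} and the normal forms that no such node occurs — the branch curve $B$ passes through the vertex of $Q$ and the preimages meet tangentially or with higher-order contact, and the non-reduced structure (when $B$ has a multiple component) contributes a copy of $\mathbb{G}_a$ for each length of the nilpotent thickening. Finally, counting: the total $\delta$-invariant of $R$, plus the unipotent contributions from the non-reduced structure and from the tree-like gluing, must sum to $\dim J = 4$ (this is forced, since $A = T = 0$ and $\dim J = 4$); so $U = J \cong \mathbb{G}_a^4$. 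In fact one need not even do the bookkeeping component by component: once one knows $J(R)$ is affine (no abelian part) and contains no torus, connectedness and $\dim = 4$ give $J(R)\cong \mathbb{G}_a^4$ because a connected commutative unipotent group of dimension $4$ over an algebraically closed field need not a priori be $\mathbb{G}_a^4$ in characteristic $p$ — it could be a nontrivial Witt-vector-type extension. To rule this out I would exhibit $J(R)$ concretely, e.g.\ via the exact sequence relating $\operatorname{Pic}(R)$ to $\operatorname{Pic}(\widetilde R)$ and $H^0$ of the sheaf $\widetilde{\calO}_{\widetilde R}/\calO_R$ supported at the singular/non-reduced locus, and note that this quotient sheaf is a successive extension of structure sheaves of points, giving $J(R)$ as a successive extension of $\mathbb{G}_a$'s which splits because each local piece is literally a $\Bbbk$-vector space of functions modulo functions (the relevant $\operatorname{Ext}^1_{\mathbb{G}_a}(\mathbb{G}_a,\mathbb{G}_a)$ obstruction vanishes here since all the gluings are along the \emph{single} branch point of a cuspidal cover, giving $\Bbbk[t]/(t^k)$-type data whose unit group's unipotent part is $\mathbb{G}_a^{k-1}$).

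The main obstacle I expect is precisely the last point: showing that the connected unipotent group $J(R)$ is the split group $\mathbb{G}_a^4$ and not some nonsplit $4$-dimensional commutative unipotent group (which exist in characteristic $2$). The cleanest way around this is to use the structure of $R$ as a curve whose normalization $\widetilde R$ is a disjoint union of $\mathbb{P}^1$'s, so that $\operatorname{Pic}^0(\widetilde R) = 0$, and then identify $J(R) = \ker\big(\operatorname{Pic}(R)\to\operatorname{Pic}(\widetilde R)\big)^0$ with the cokernel of $\mathbb{G}_m^{(\#\text{components})} \to H^0(R, \widetilde{\calO}^\times/\calO^\times)$, an explicit quotient of an explicit algebraic group by an explicit subgroup. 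Over each point where $R$ is locally $\operatorname{Spec}\Bbbk[s,t]/(s^2 - t^{2k+1})$ or locally a nilpotent thickening, the local factor $(\widetilde{\calO}^\times/\calO^\times)$ is a finite-dimensional unipotent group that is filtered with graded pieces $\mathbb{G}_a$ coming from the filtration by powers of the uniformizer, and because the uniformizer of $\widetilde R$ acts the filtration is canonically split by the "leading coefficient" maps — this exhibits each local factor, hence the whole of $J(R)$, as a product of $\mathbb{G}_a$'s. Assembling the local dimensions and equating with $\dim J(R) = 4$ (which we know a priori from $p_a(R) = 4$) then finishes the proof without any delicate case analysis beyond reading the singularity types off the table in Theorem \ref{thm: canform}.
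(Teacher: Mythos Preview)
Your overall strategy --- kill the abelian and toric parts of $J(R)$ and then identify the remaining unipotent group --- is sound, and since the paper's own proof is simply a pointer to the analogous \cite[Lemma 4.2]{DM}, a direct argument of this kind is exactly what one would have to supply. Two points, however, need correction.

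First, on the toric part: in Case~(3) the three components $R_1,R_2,R_3$ are three members of $|-K_X|$, all passing through the base point $p_0$; since a single blow-up at $p_0$ separates them into disjoint fibers of the elliptic fibration, they meet at $p_0$ with pairwise distinct tangents, i.e.\ $R$ has a planar ordinary triple point there. Locally this contributes $\mathbb{G}_m^2\times\mathbb{G}_a$, not a purely unipotent group. The global toric rank is still zero, but only because the $\mathbb{G}_m^2$ is absorbed by the quotient by $(\Bbbk^\times)^{\#\text{comp}-1}$; your assertion that ``the dual graph is a tree'' is only correct under a convention for dual graphs that handles points where more than two branches meet --- the honest computation is $|\tilde S|-|S|-(\#\pi_0(\tilde R)-1)=6-4-2=0$.

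Second, and this is a genuine error: your claim that the unipotent radical of $(\Bbbk[t]/(t^k))^\times$ is $\mathbb{G}_a^{k-1}$ is \emph{false} in characteristic~$2$. Already for $k=3$ the group $1+t\Bbbk[t]/(t^3)$, with law $(a,b)\cdot(a',b')=(a+a',\,b+b'+aa')$, is the length-$2$ Witt group $W_2$, not $\mathbb{G}_a^2$; the ``leading coefficient'' maps do not split the filtration. What rescues the argument for the singularities that actually occur is a different, characteristic-$2$-specific observation: at an $A_{2k}$ cusp the local ring is $\mathcal{O}=\Bbbk[[t^2,t^{2k+1}]]\supset\Bbbk[[t^2]]$, and since squaring is a ring endomorphism in characteristic~$2$, every $u\in\Bbbk[[t]]^\times$ satisfies $u^2\in\Bbbk[[t^2]]^\times\subset\mathcal{O}^\times$; hence $\tilde{\mathcal{O}}^\times/\mathcal{O}^\times$ is $2$-torsion, and a smooth connected commutative unipotent $\Bbbk$-group killed by $2$ is necessarily a power of $\mathbb{G}_a$ (every non-split commutative extension of $\mathbb{G}_a$ by $\mathbb{G}_a$ in characteristic~$p$ has points of order $p^2$). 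The same device handles the tacnodes and the triple point once the $\mathbb{G}_m$-factors are cancelled. For the non-reduced Cases~(4) and~(5), where $R=2R_1+R_2$ or $R=3R_1$, one must further check that the successive extensions by $H^1(R_1,\mathcal{I}_j)$ do not produce a $W_2$; this is the step your sketch does not address and is where the remaining work lies.
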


Thus, as in degree $2$, finding fake tritangent planes using theta characteristics on $R$ is subtle in characteristic $2$. We refer to Example \ref{ex: thetacharacteristic} for an example in degree $2$ that further illustrates this point and leave it to the reader to find a similar example in degree $1$.

\subsection{Rational elliptic surfaces} \label{sec: strangefibration}
Equation \eqref{equation} can also serve as the Weierstrass equation of the rational elliptic surface 
$\phi :Y\to \bbP^1$ obtained by blowing up the base point $p_0$ of $|-K_X|$.
Since $X$ is a del Pezzo surface, all members of $|-K_X|$ are irreducible, hence so are all fibers of $\phi$.
The discriminant of $\phi$ is
$$\Delta = a_3^4+a_1^3a_3^3+a_1^4(a_4^2+a_1a_3a_4+a_2a_3^2+a_1^2a_6).$$
The singular fibers of $\phi$ lie over the zeroes of $\Delta$. 
Moreover, the Bertini involution, which is given by 
$\beta: y \mapsto y + (a_1x + a_3)$, induces the inversion on the group structure of each fiber. 
In particular, for $[u_0:v_0] \in \mathbb{P}^1$, if $a_1(u_0,v_0)x + a_3(u_0,v_0) = 0$, 
 the corresponding fiber $F$ of $\phi$ is cuspidal, if $a_1(u_0,v_0) = 0$ and $a_3(u_0,v_0) \neq 0$, then $F$ 
is smooth and supersingular, and in the other cases, $F$ is either nodal, or smooth and ordinary, according to whether $\Delta(u_0,v_0)$ is zero or not. Applying these observations to the normal forms of Theorem \ref{thm: canform}, we obtain the following information on $\phi$.

\begin{proposition}\label{prop: strangefibration}
Let $X$ be a del Pezzo surface of degree $1$ given by one of the normal forms in Theorem \ref{thm: canform}. Then, the associated elliptic fibration $\phi$ is elliptic and all its fibers are irreducible. The discriminant $\Delta$ and the singular fibers of $\phi$ are given in the following table.

 $$
  \resizebox{\textwidth}{!}{
  $
    \begin{array}{|c|c|c|c|} \hline
    \text{Name}  & \Delta & \text{Nodal fibers over the} & \text{Cuspidal fibers over} \\ \hline \hline 
   \multirow{2}{*}{$ (1) (a) $}& v^{12} +  u^3 v^9 + (d^2 + a) u^4 v^8 + d u^5 v^7 + h u^6 v^6 + c u^7 v^5  & \multirow{2}{*}{$12 \text{ roots of } \Delta $}& \multirow{2}{*}{--} \\
    &  +  (c^2 + g) u^8 v^4 + b u^9 v^3 + f u^{10} v^2 + (b^2 + e) u^{12}  & & \\ \hline 
\multirow{2}{*}{$ (1) (b) $} & v^{12} +  u^3 v^9 + (d^2 + a) u^4 v^8 + d u^5 v^7 + h u^6 v^6 + c u^7 v^5  & \multirow{2}{*}{$12 \text{ roots of } \Delta$} & \multirow{2}{*}{--} \\
    &  +  (c^2 + g) u^8 v^4  + f u^{10} v^2 + e u^{12}  & & \\ \hline 
\multirow{2}{*}{$(1) (c) $}& v^{12} +  u^3 v^9 + (d^2 + a) u^4 v^8 + d u^5 v^7 + h u^6 v^6  & \multirow{2}{*}{$ 12 \text{ roots of } \Delta $} & \multirow{2}{*}{--} \\
& + g u^8 v^4 + f u^{10} v^2 + e u^{12} & & \\ \hline 
\multirow{2}{*}{$(1) (d) $} & v^{12} +  u^3 v^9 +  a u^4 v^8 + h u^6 v^6 + c u^7 v^5  & \multirow{2}{*}{$12 \text{ roots of } \Delta $} & \multirow{2}{*}{--}\\
&  + (c^2 + g) u^8 v^4 + f u^{10} v^2 + e u^{12} & & \\ \hline 
\multirow{2}{*}{$(1) (e) $}&  v^{12} +  u^3 v^9 + a u^4 v^8  + h u^6 v^6   & \multirow{2}{*}{$ 12 \text{ roots of } \Delta$} & \multirow{2}{*}{--} \\
&  +  g u^8 v^4 + f u^{10} v^2 + e u^{12} & & \\ \hline 
(2) (a) & u^4(v^8 +  u^2(b u^6 + d u^4 v^2 + e u^3 v^3 + f u^2 v^4 + gs v^5 + h v^6)) & 8 \text{ roots of } u^{-4} \Delta & [0:1] \\ \hline 
(2) (b) & u^4(v^8 +  u^2(b u^6 + d u^4 v^2 + f u^2 v^4 + gs v^5 + h v^6)) &  8 \text{ roots of } u^{-4} \Delta &  [0:1] \\ \hline  (2) (c) & u^4(v^8 +  u^2(b u^6 +  d u^4 v^2 + e u^3 v^3 + f u^2 v^4 + h v^6)) & 8 \text{ roots of } u^{-4} \Delta & [0:1] \\ \hline 
(2) (d) & u^4(v^8 +  u^2(c u^5v+ d u^4 v^2 + f u^2 v^4 + h v^6)) &  8 \text{ roots of } u^{-4} \Delta &  [0:1] \\ \hline 
 \multirow{2}{*}{$(2) (e)$} & \multirow{2}{*}{ $u^6(b u^6 + d u^4 v^2 + e u^3 v^3 + f u^2 v^4 + e uv^5 + h v^6) $}& \text{If } h \neq 0: 6 \text{ roots of } u^{-6} \Delta & \multirow{2}{*}{$ [0:1] $ } \\
 && \text{If } h = 0:  5 \text{ roots of } u^{-7} \Delta &   \\ \hline 
 \multirow{2}{*}{$ (2) (f) $} & \multirow{2}{*}{$ u^6(b u^6 + d u^4 v^2 + f u^2 v^4 + uv^5 + h v^6) $} & \text{If } h \neq 0:  6 \text{ roots of } u^{-6} \Delta &  \multirow{2}{*}{$ [0:1] $} \\
 && \text{If } h = 0: 5 \text{ roots of } u^{-7} \Delta &   \\ \hline 
 (3) \phantom{(f)} & u^4v^4(u+v)^4 & $--$ &  [1:0],[0:1],[1:1] \\ \hline 
 (4) \phantom{(f)} & u^8v^4 & $--$ & [1:0],[0:1] \\ \hline 
 (5) \phantom{(f)} & u^{12} & $--$ &  [0:1]  \\ \hline 
    \end{array}
    $
     }
   $$
\end{proposition}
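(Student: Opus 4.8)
The plan is to take the discriminant formula $\Delta = a_3^4+a_1^3a_3^3+a_1^4(a_4^2+a_1a_3a_4+a_2a_3^2+a_1^2a_6)$ stated at the beginning of Section~\ref{sec: strangefibration} and substitute, one row at a time, the explicit values of $(a_1,a_2,a_3,a_4,a_6)$ from the table in Theorem~\ref{thm: canform}. Since the characteristic is $2$, the substitution simplifies dramatically: all cross-terms with a factor of $2$ vanish, and $\Delta$ is essentially $a_3^4 + a_1^3 a_3^3 + a_1^4 a_4^2 + a_1^5 a_3 a_4 + a_1^4 a_2 a_3^2 + a_1^6 a_6$ (using that Frobenius is additive, so $(a_4^2 + \dots)$ terms stay as written). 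For each of the $14$ normal forms this is a direct polynomial computation, and the homogenized expressions displayed in the table's $\Delta$-column are exactly what comes out after expanding; for instance in Case~(1)(a), $a_1 = u$, $a_3 = v^3$, so $a_3^4 = v^{12}$, $a_1^3 a_3^3 = u^3 v^9$, $a_1^4 a_4^2 = u^4(bu^4+cu^2v^2+dv^4)^2 = u^4(b^2u^8 + c^2u^4v^4 + d^2v^8)$, etc., and collecting terms gives the stated degree-$12$ binary form. I would record each such computation compactly rather than reproduce every expansion.

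Next I would read off the fiber types from the criterion already stated in the paragraph preceding the proposition: over $[u_0:v_0]$, the fiber $F$ is cuspidal (additive, type II since all fibers are irreducible and this is a rational elliptic surface with a section) precisely when $a_1(u_0,v_0)x + a_3(u_0,v_0) \equiv 0$, i.e. when both $a_1(u_0,v_0)=0$ and $a_3(u_0,v_0)=0$; it is nodal (type I$_1$) when $\Delta(u_0,v_0)=0$ but $(a_1,a_3)$ does not vanish there; and it is smooth otherwise. So the cuspidal locus is $V(a_1)\cap V(a_3) = V(\gcd(a_1,a_3))$ — for Cases~(1) this is empty (since $\gcd(u,v^3)=1$), for Cases~(2) it is $V(u)\cap V(x) $ viewed on the base, i.e. the single point $[0:1]$ (as $a_1=u$, $a_3 = 0$, the condition $a_1x+a_3=0$ on the base means $u=0$), for Case~(3) it is the three points $[1:0],[0:1],[1:1]$ where $uv(u+v)$ vanishes, for Case~(4) the two points $[1:0],[0:1]$, and for Case~(5) the single point $[0:1]$. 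This matches the last two columns. For the nodal fibers I would note that $\Delta$ factors as (cuspidal part)${}\cdot{}$(rest): in Cases~(2), $\Delta = u^4 \cdot (\text{degree-8 part})$, and the $u^4$ accounts exactly for the type~II fiber at $[0:1]$ (a type~II fiber contributes $4$ to the order of $\Delta$ — via Ogg's formula / the Euler number count $e(\mathrm{II})=2$ plus Swan conductor $2$ in characteristic $2$), so the nodal fibers sit over the $8$ roots of $u^{-4}\Delta$; similarly in Cases~(3),(4),(5) the full $\Delta$ is a product of fourth powers of the cuspidal linear forms and there are no nodal fibers. The special sub-case $h=0$ in (2)(e),(2)(f) I would handle by observing that then $v \mid u^{-6}\Delta$, so the factor $u^{-6}\Delta$ loses one root at $[0:1]$ but the cuspidal fiber there is still type~II, accounting for why one divides by $u^7$ and gets $5$ nodal fibers.

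Finally, I would justify the first sentence of the proposition — that $\phi$ is genuinely elliptic (not quasi-elliptic) with all fibers irreducible. Irreducibility of all fibers is immediate since all members of $|-K_X|$ are irreducible (stated at the start of the section) and blowing up the base point $p_0$ does not change this. For $\phi$ being elliptic rather than quasi-elliptic: in characteristic $2$ a quasi-elliptic rational elliptic surface would have all fibers cuspidal, but the $\Delta$-column shows $\Delta$ is not a perfect fourth power (equivalently $a_1 x + a_3$ is not identically a square times the right thing, equivalently $j$ is non-constant) — most cleanly, in every normal form $\Delta$ has at least one root of odd order or the generic fiber has $j\neq 0$, so the generic fiber is smooth and ordinary; alternatively cite that the smoothness conditions in Theorem~\ref{thm: canform} already force $\Delta \not\equiv 0$ and $\Delta$ not a square in the cases where $a_1 \neq 0$, while for Cases~(3),(4),(5) one checks directly the generic fiber $y^2 + a_3 y + x^3 + \dots$ with $a_3 \neq 0$ generically is an ordinary elliptic curve. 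The main obstacle I anticipate is purely bookkeeping: correctly homogenizing $\Delta$ and carefully matching the order-of-vanishing contributions of the type~II fibers in characteristic $2$ (where wild ramification makes $\mathrm{ord}_t \Delta = 4$ for type~II rather than $2$) so that the ``$u^{-4}$'', ``$u^{-6}$'', ``$u^{-7}$'' normalizations and the resulting counts of $8$, $6$, $5$ nodal fibers come out exactly right; there is no conceptual difficulty, only the risk of arithmetic slips in the $14$ separate expansions.
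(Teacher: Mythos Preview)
Your approach is exactly what the paper does: the proposition is stated without proof, the text preceding it simply says ``Applying these observations to the normal forms of Theorem~\ref{thm: canform}, we obtain the following information on $\phi$,'' and your plan is a correct fleshing-out of that computation.

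Two small slips worth fixing. First, in Cases~(3),(4),(5) the generic fiber is \emph{supersingular}, not ordinary (the paper says this explicitly: $a_1=0$, $a_3\neq 0$ gives a smooth supersingular fiber); your conclusion that $\phi$ is elliptic is still correct since supersingular curves are smooth, but the word ``ordinary'' is wrong there, and the remark that ``$j$ is non-constant'' fails in those cases too. Second, in your handling of $h=0$ in (2)(e),(2)(f) you wrote $v \mid u^{-6}\Delta$; it should be $u \mid u^{-6}\Delta$ (when $h=0$ the degree-$6$ cofactor $a_6$ has no $v^6$ term, hence is divisible by $u$, so the extra root absorbed into the cuspidal point $[0:1]$ is at $u=0$). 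Relatedly, your parenthetical that a type~II fiber always contributes $\mathrm{ord}_t\Delta = 4$ is too rigid: the table itself shows the order is $4$ in (2)(a)--(d) but $6$ or $7$ in (2)(e),(2)(f), reflecting different wild Swan conductors; the cleaner statement is that the nodal fibers sit over the roots of $\Delta$ away from $V(a_1,a_3)$, without trying to pin down the exact cuspidal contribution.
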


\smallskip
 
\begin{remark}
The classification of singular fibers of rational elliptic surfaces with a section in characteristic $2$ can be found in \cite{Lang}.
Lang shows that in the cases where the general fiber is a supersingular elliptic curve, the
number of singular fibers is at most $3$, which agrees with our computations. In fact, our normal forms also yield normal forms for all rational elliptic surfaces with a section in characteristic $2$ 
whose singular fibers are irreducible. 
\end{remark}

\section{Automorphism groups of del Pezzo surfaces of degree $1$} \label{sec: automorphisms}
This section consists of three parts. In the first part, we collect various restrictions on the group $G(X) = \Aut(X)/\langle \beta \rangle$ arising from the geometry of $X$. In the second part, we give an explicit description of $\Aut(X)$ in terms of Equation \eqref{equation} and use it to classify all surfaces where $G(X)$ is non-trivial and to determine the group $\Aut(X)$ in every case.
In the third part, we compare our classifiction with the classification in characteristic $0$ from \cite[Table 8.14]{CAG} and use this to determine the conjugacy classes of all elements in $\Aut(X)$ (see Table \ref{tbl:autodp1} in the Appendix).
Throughout, we assume $p = 2$.

\subsection{Restrictions on $G(X)$}
Since the elliptic fibration $\phi:Y \to \mathbb{P}^1$ associated to $X$ is obtained by blowing up the base point of $|-K_X|$, we can identify $\Aut(X)$ with the subgroup of $\Aut(Y)$ preserving a chosen section.
Let $r:\Aut(X)\to \Aut(\bbP^1)$ be the natural homomorphism defined by the action of $\Aut(X)$ on the coordinates $[u:v]$ of the 
base of $\phi$. Since $\phi$ is the unique relatively minimal smooth proper model of its generic fiber $F_{\eta}$, the kernel $K = \Ker(r)$ is isomorphic to the group of automorphisms of the elliptic curve $F_{\eta}$. 
In particular, $K$ contains the Bertini involution $\beta$ and it can contain more automorphisms only if the $j$-invariant of 
$F_{\eta}$ is equal to $0 = 1728$, in which case $K$ is a subgroup of $Q_8 : 3 \cong \SL_2(\mathbb{F}_3)$. 

Let $P$ be the image of $r$. Evidently, $P$ is a finite subgroup of $\Aut(\bbP^1)$ that leaves invariant the set $S_1$ of points 
$p = [u_i:v_i]$ corresponding to the singular fibers. 
It also leaves invariant the set $S_2$ of the projections of singular points of the irreducible components of the ramification 
curve $R$.

The following proposition shows what kind of groups can be expected to occur for $P$. We use the 
known classification of  finite subgroups of $\Aut(\bbP^1) \cong \PGL_2(\Bbbk) \cong \SL_2(\Bbbk)$ \cite[Theorem 2.5]{DM}.

\begin{proposition}
The group $P$ is isomorphic to $G_{\xi,A}$ or $D_{2n}$.
\end{proposition}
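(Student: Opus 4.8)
The plan is to rule out the two remaining possibilities in the classification theorem, namely that $I$ is isomorphic to $\SL_2(\bbF_q)$ or to $\langle \SL_2(\bbF_q), d_\pi\rangle$ for some $q = 2^m \ge 4$. In both of these cases $\SL_2(\bbF_q)$ is a subgroup of $I$, so it suffices to show that $\SL_2(\bbF_q)$ with $q \ge 4$ cannot act on $\bbP^1$ while preserving the finite set $S_1$ of points under the singular fibers of $\phi$. First I would recall that $S_1$ is nonempty: by Proposition \ref{prop: strangefibration} the discriminant $\Delta$ is a nonzero homogeneous polynomial of degree $12$, so $\phi$ has at least one (and at most twelve) singular fibers, hence $1 \le |S_1| \le 12$.

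The key point is then a transitivity/orbit-size argument for $\SL_2(\bbF_q)$ acting on $\bbP^1 = \bbP^1(\Bbbk)$. The group $G = \SL_2(\bbF_q)$ has order $q(q-1)(q+1)$ and acts on $\bbP^1$ with exactly two orbit types: the orbit $\bbP^1(\bbF_q)$ of size $q+1$, on which $G$ acts $2$-transitively, and all other orbits, which are free of size $|G| = q(q^2-1)$ (the stabilizer of a point not defined over $\bbF_q$ is trivial, since a nontrivial element of $\SL_2(\bbF_q)$ fixing a point of $\bbP^1(\Bbbk)$ is either unipotent, fixing a unique point of $\bbP^1(\bbF_q)$, or semisimple, fixing two points defined over $\bbF_q$ or over $\bbF_{q^2}$ and conjugate — in no case a single point outside $\bbP^1(\bbF_q)$). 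Therefore any finite $G$-invariant subset of $\bbP^1$ is a union of the orbit $\bbP^1(\bbF_q)$ and free orbits, so its cardinality is congruent to $0$ or $q+1$ modulo $q(q^2-1)$; in particular a nonempty finite invariant set has at least $q+1 \ge 5$ elements, and if it is not contained in $\bbP^1(\bbF_q)$ it has at least $q(q^2-1) \ge 24$ elements.

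Applying this to $S_1$ with $1 \le |S_1| \le 12$ forces $S_1 \subseteq \bbP^1(\bbF_q)$ and $|S_1| = q+1 \le 12$, so $q \in \{4, 8\}$. To eliminate these last two cases I would bring in the second invariant set $S_2$, the projections to $\bbP^1$ of the singular points of the components of $R_{\red}$: from the last column of the table in Theorem \ref{thm: canform} one reads off that $|S_2| \le 3$ in every normal form (there are at most three $A_n$-points listed), and $S_2$ is also $G$-invariant and nonempty. But the same orbit-size bound gives $|S_2| \ge q+1 \ge 5$, a contradiction. Hence $\SL_2(\bbF_q)$ cannot occur, which also rules out $\langle \SL_2(\bbF_q), d_\pi\rangle$, and $I$ must be of type $G_{\xi,A}$ or $D_{2n}$.

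The main obstacle I anticipate is making the claim ``$|S_2| \le 3$'' fully rigorous and genuinely independent of the $\SL_2$-hypothesis: the table in Theorem \ref{thm: canform} lists singularities of $R_{\red}$ only under the stated genericity conditions on the parameters, so one should either argue directly that the singular locus of $R_{\red}$ always projects to at most a handful of points of $\bbP^1$ (which follows because each component $R_i$ of $R$ is either the curve over a component of $B$ — an integral plane-cubic curve whose non-normal or singular point is a single point, contributing one point of $\bbP^1$ — or the "horizontal" component, and $B$ has at most three components), or else first establish $S_1 \subseteq \bbP^1(\bbF_q)$ with $q \in \{4,8\}$ and only then invoke the table, noting that $R$ is reduced on a dense open set so its singular locus is a proper closed subset whose image in $\bbP^1$ is finite of controlled size. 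Either way, once both $S_1$ and $S_2$ are seen to be nonempty $G$-invariant finite subsets with $|S_1| \le 12$ and $|S_2|$ small, the modular arithmetic of orbit sizes finishes the proof.
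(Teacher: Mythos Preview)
Your strategy—bounding orbit sizes of $\SL_2(\bbF_q)$ on $\bbP^1(\Bbbk)$ and comparing with $|S_1|$, $|S_2|$—is genuinely different from the paper's. The paper instead takes a polynomial $F$ vanishing exactly on $S_2$ (so $\deg F \le 4$), observes it must be $\SL_2(\bbF_q)$-invariant once $q\ge 4$ (since then $\SL_2(\bbF_q)$ is simple and has no nontrivial characters), and invokes the Dickson invariants: $\Bbbk[u,v]^{\SL_2(\bbF_q)}$ is generated in degrees $q+1$ and $q^2-q$, both $\ge 5$. Your approach is more elementary and avoids invariant theory, but two points need correction.

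First, your orbit analysis is wrong. A non-split torus in $\SL_2(\bbF_q)$ (of order $q+1$) fixes two distinct points of $\bbP^1(\Bbbk)$ lying in $\bbP^1(\bbF_{q^2})\setminus\bbP^1(\bbF_q)$; over the algebraically closed field $\Bbbk$ these are honest points with nontrivial stabilizer, so they lie in an orbit of size $q(q-1)$, not a free orbit. (For $q=4$ this gives an orbit of size $12$, so ``$|S_1|\le 12$ and $S_1\subseteq\bbP^1(\bbF_q)$'' does not follow as you claim.) Second, $|S_2|\le 3$ is a miscount: in Case~(1)(a) of Theorem~\ref{thm: canform} the curve $R$ has four $A_2$-singularities, so the correct universal bound is $|S_2|\le 4$.

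Neither error is fatal. The \emph{smallest} $\SL_2(\bbF_q)$-orbit on $\bbP^1(\Bbbk)$ still has size $q+1\ge 5$ for $q\ge 4$ (since $q+1\le q(q-1)$ and $q+1\le q(q^2-1)$), and $S_2$ is nonempty with $|S_2|\le 4$ in every case. So the contradiction $5\le q+1\le |S_2|\le 4$ follows directly from $S_2$ alone; the detour through $S_1$ is unnecessary. With these fixes your argument is correct and arguably cleaner than the paper's.
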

\begin{proof}
Since $\SL_2(2) \cong \mathfrak{S}_3 \cong D_6$, it suffices to show that $\SL_2(\bbF_q) \not \subseteq P$ 
for $q = 2^m$ and $m \geq 2$.
Since the set $S_2$ has cardinality at most $4$ and $P$ preserves $S_2$, every homogeneous polynomial $F$ with simple roots along $S_2$ is $P$-semi-invariant of degree at most $4$. On the other hand, by \cite[Theorem 6.1.8]{Neusel}, 
the ring $\Bbbk[u,v]^{\SL_2(\bbF_q)}$ is generated over $\bbF_q$ by the 
Dickson polynomials $\mathbf{L}$ and $\mathbf{d}_{2,1}$ of degrees $q+1$ and $q^2-q$,
respectively. If $\SL_2(q) \subseteq P$, then $F$ is also a semi-invariant polynomial for $\SL_2(q)$ and
 if $q \neq 2$, then $\SL_2(\bbF_q)$ is simple, so $F \in \Bbbk[u,v]^{\SL_2(q)} = \Bbbk[\mathbf{L},\mathbf{d}_{2,1}]$. Hence, $q = 2$, as claimed.
\end{proof}

We recall from \cite[\S 1.3]{DM} that the image of the Bertini involution $\beta$ under the injective homomorphism 
$\rho:\Aut(X)\to W(\sfE_8)$ is equal to  $-\id_{\sfE_8}$. However, in contrast to the situation in degree $2$,
the extension  $W(\sfE_8)\to W(\sfE_8)/(-\id_{\sfE_8}) \cong \Or_8^+(2)$ does not split. The semi-direct product 
$W(\sfE_8) = 2.\textrm{GO}_8^+(2)$ corresponds to a non-trivial homomorphism $\Or_8^+(2)\to C_2$, whose
 kernel is a simple group $\Or_8(2)$, where we use the  ATLAS notation.

Therefore, in order to determine $\Aut(X)$, it is not enough to determine the 
image $G(X)$ of the homomorphism $\Aut(X) \to \Aut(X)/\langle \beta \rangle$, and thus the calculation of $\Aut(X)$ 
is more complicated than in the case of del Pezzo surfaces of degree $2$. 

Let us summarize the restrictions on $\Aut(X)$ and $G(X)$ that we have collected by now.

\begin{theorem}
Let $X$ be a del Pezzo surface of degree $1$ in characteristic $2$. Let $G(X)$ be the image of the 
homomorphism $\Aut(X) \to \Aut(\mathbb{P}(1,1,2))$, let $K$ be the kernel of the homomorphism $r: \Aut(X) \to \Aut(\bbP^1)$, let 
$P$ be the image of $r$, and let $\phi: Y \to \bbP^1$ be the elliptic fibration associated to $X$. Then, the following hold:
\pagebreak
\begin{enumerate}
    \item[(i)] $\Aut(X)$ is a central extension of $G(X)$ by $\langle \beta \rangle \cong C_2$.
    \item[(ii)] $\Aut(X)$ is an extension of $P$ by $K$.
    \item[(iii)] $\Aut(X)$ is a subgroup of $W(E_8)$.
    \item[(iv)] $G(X)$ is a subgroup of $\Or_8^+(2)$.
    \item[(v)] $K$ is the automorphism group of the generic fiber of $\phi$.
    \item[(vi)] $P$ is isomorphic to $G_{\xi,A}$ or $D_{2n}$.
    \item[(vii)] $P$ preserves the set $S_1$ of points lying under singular fibers of $\phi$. 
    Moreover, it preserves the decomposition of $S_1$ into subsets corresponding to isomorphic fibers.
    \item[(viii)] $P$ preserves the set $S_2$ of points lying under the singularities of $R$. 
    Moreover, it preserves the decomposition of $S_2$ into subsets of isomorphic singularities.
    \item[(ix)] The $j$-function of $\phi$ is $P$-invariant.
\end{enumerate}
\end{theorem}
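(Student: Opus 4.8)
The statement collects in one place the structural constraints established in this section and in Section \ref{sec: strangefibration}, so the proof is largely a matter of assembling them; I will treat (i)--(ix) in a logical rather than numerical order. The two ambient objects are the anti-bicanonical morphism $f\colon X \to Q$ of Proposition \ref{prop: separable} and the elliptic fibration $\phi\colon Y \to \bbP^1$ obtained by blowing up the base point of $|-K_X|$. Both are intrinsic to $X$: $f$ is defined by the linear system $|-2K_X|$, and $\phi$ is obtained functorially from the pencil $|-K_X|$. Hence $\Aut(X)$ acts compatibly on $Q\cong\mathbb{P}(1,1,2)$, on $\bbP^1$, and on the distinguished section of $\phi$, and it preserves the ramification divisor $R$ of $f$ as well as the set of singular fibers of $\phi$. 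Note also that $\Aut(X)$ is finite, since it embeds into $W(E_8)$ by Proposition \ref{prop: Weyl}, so $I$ and $K$ are finite.

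For (i), (iii) and (iv): the kernel of the homomorphism $\Aut(X) \to \Aut(\mathbb{P}(1,1,2))$ consists of deck transformations of the separable degree-$2$ cover $f$, hence equals $\langle \beta \rangle$, a normal subgroup of order $2$ and therefore central; its image is by definition $G(X)$, which proves (i). Statement (iii) is the injectivity of $\rho$ in Proposition \ref{prop: Weyl}. For (iv) I combine both: since $\rho\colon \Aut(X) \hookrightarrow W(E_8)$ is injective with $\rho(\beta) = -\mathrm{id}$, passing to quotients gives an embedding
\[
G(X) \;\cong\; \Aut(X)/\langle \beta \rangle \;\hookrightarrow\; W(E_8)/\{\pm \mathrm{id}\} \;\cong\; \Or^+(8,\bbF_2).
\]

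For (ii), (v) and (vi): by construction $r\colon \Aut(X)\to\Aut(\bbP^1)$ has image $I$ and kernel $K$, which gives the extension in (ii). An element of $K$ acts trivially on the base and fixes the section of $\phi$, hence restricts to an automorphism of the generic fiber $F_\eta$ fixing its origin; conversely, since $\phi$ is the unique relatively minimal smooth proper model of $F_\eta$, any automorphism of the elliptic curve $F_\eta$ extends to $Y$ and necessarily preserves the section (it being the closure of the fixed origin), so $K$ is the automorphism group of $F_\eta$, which is (v). Statement (vi) is precisely the proposition established just above, which uses the classification of finite subgroups of $\PGL_2(\Bbbk)$ together with the bound $|S_2|\le 4$ to exclude $\SL_2(\bbF_q)$ and $\langle \SL_2(\bbF_q), d_\pi\rangle$ for $q>2$.

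Finally, for (vii), (viii) and (ix): if $g\in\Aut(Y)$ lies over $r(g)\in\Aut(\bbP^1)$, then $g$ maps the fiber of $\phi$ over a point $p$ isomorphically onto the fiber over $r(g)(p)$; therefore $I$ permutes $S_1$ while preserving its partition into fibers of a fixed isomorphism type, and the induced action on $j$-invariants gives $j\circ r(g)=j$, establishing (vii) and (ix). For (viii), $\Aut(X)$ preserves $R$ because $f$ is intrinsic and $R$ is its ramification divisor; hence it permutes the irreducible components of $R$ and their singular points while preserving the local type of each, and projecting these points to the base of $\phi$ shows $I$ preserves $S_2$ together with its stratification by singularity type. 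The only steps requiring genuine care are the identification in (v), which rests on the uniqueness of the relatively minimal model, and the consistent bookkeeping that $\beta$ lies simultaneously in $K$ and in the kernel of $\Aut(X)\to\Aut(\mathbb{P}(1,1,2))$, so that the extension structures (i) and (ii) are compatible — a compatibility used throughout Section \ref{sec: automorphisms}, but which introduces no further difficulty in the proof of the theorem itself.
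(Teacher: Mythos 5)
Your proposal is correct and assembles exactly the facts the paper itself relies on for this summary theorem (the paper gives no separate proof, since each item is established in the preceding discussion): the kernel of $\Aut(X)\to\Aut(\mathbb{P}(1,1,2))$ being the deck group $\langle\beta\rangle$ of the separable double cover $f$, Proposition \ref{prop: Weyl} for (iii) and (iv), the uniqueness of the relatively minimal model for (v), the proposition on $I$ for (vi), and the intrinsic nature of $f$, $R$ and $\phi$ for (vii)--(ix). No gaps; the quotient argument for (iv) via $\rho^{-1}(\{\pm\mathrm{id}\})=\langle\beta\rangle$ is the right justification.
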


This yields the following preliminary restrictions on $\Aut(X)$ and $G(X)$.

\begin{corollary} \label{cor: preliminary}
Let $X$ be a del Pezzo surface of degree $1$ in characteristic $2$ given by one of the normal forms in Theorem \ref{thm: canform}.
\begin{enumerate}
    \item[(i)] In Case (1), $G(X)$ is a subgroup of $A_4$.
    \item[(ii)] In Cases  (2) (a), (2) (b), (2) (c), and (2) (d), $G(X)$ is a subgroup of $2^3$.
    \item[(iii)] In Cases (2) (e) and (2) (f), $G(X)$ is a subgroup of $C_5$ or $C_2$.
    \item[(iv)] In Case (3), $K$ is a subgroup of $\SL_2(3)$ and $P$ is a subgroup of $\mathfrak{S}_3$.
    \item[(v)] In Case (4), $K$ is a subgroup of $\SL_2(3)$ and $P$ is cyclic of order $1,3,5,7,9,$ or $15$.
    \item[(vi)] In Case (5) $K$ is a subgroup of $\SL_2(3)$ and $P \cong G_{\xi,A}$, where $\xi$ is a primitive $n$-th root of unity with $n \in \{1,3,5,7,9,15\}$.
\end{enumerate}
\end{corollary}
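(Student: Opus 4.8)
The plan is to run the restrictions from the preceding Theorem case by case, using above all item (vi) (the classification of the image group $I$), item (vii) (that $I$ preserves the set $S_1$ of points under singular fibers with its decomposition by fiber type), item (viii) (the same for $S_2$ and the singularities of $R_{\red}$), item (v) (that $K$ is $\Aut$ of the generic fiber, hence inside $\SL_2(\mathbb{F}_3)$ as soon as the $j$-invariant is $0$), and item (i) (that $G(X)$ is the central quotient $\Aut(X)/\langle\beta\rangle$, so $|G(X)| = |\Aut(X)|/2$ and $G(X)$ fits in the extension $1\to K/\langle\beta\rangle\to G(X)\to I\to 1$). For each normal form I would read off from the tables in Theorem \ref{thm: canform} and Proposition \ref{prop: strangefibration} the cardinalities and types occurring in $S_1$ and $S_2$, bound $I$ accordingly, bound $K$ via the fiber data, and then combine.

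Concretely: in Case (1) the fibration has $12$ distinct nodal fibers and no cuspidal fibers, so the generic fiber is not supersingular and (the $j$-invariant being generically nonzero) $K = \langle\beta\rangle$, whence $G(X)\cong I$; moreover $S_2$ consists of $\le 4$ points, all of type $A_2$ in subcase (a) (where $I$ must permute the four roots of $v^8+dv^6+cv^4+bv^2$, a set that — being the root set of an even polynomial — is stabilized by $x\mapsto -x=x$, but in characteristic $2$ one argues directly that it has $4$ elements), so $I\hookrightarrow \mathfrak{S}_4$; combining with item (vi), a finite subgroup of $\PGL_2$ that also embeds in $\mathfrak{S}_4$ must lie in $A_4$ — here one should note $\mathfrak{S}_4$ is \emph{not} a subgroup of $\PGL_2(\Bbbk)$ in characteristic $2$ while $A_4$ is, and in the subcases (b)–(e) the presence of a distinguished fiber type (an $A_4$, $A_6$, $A_8$ over $[1:0:0]$, or fewer singular points) forces $I$ even smaller, still inside $A_4$. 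In Cases (2)(a)–(d) there is exactly one cuspidal fiber, at $[0:1]$, so $I$ fixes the point $[0:1]$; then the Weierstrass data force $K$ to be just $\langle\beta\rangle$ away from supersingular locus — actually here one uses that $I$ fixes a point and is inside $G_{\xi,A}$ or $D_{2n}$, and the $S_2$-data (three $A_2$-points, or one $A_4$ and one $A_2$) pin $I$ down to an elementary abelian $2$-group of rank $\le 3$, giving $G(X)\le(\bbZ/2\bbZ)^3$. In Cases (2)(e),(f) the discriminant is $u^6(\cdots)$ and the extra symmetry available is the scaling $v\mapsto\lambda v$ with $\lambda^5=1$ coming from the shape $eu^5v$-type monomials, so $I$ is cyclic of order dividing $5$ (or trivial/$\bbZ/2\bbZ$), again with $K=\langle\beta\rangle$ generically, so $G(X)\le\bbZ/5\bbZ$ or $\bbZ/2\bbZ$.

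For Cases (3), (4), (5) the discriminant is a power of a product of linear forms and all singular fibers are cuspidal, so the generic fiber has $j$-invariant $0$ and thus $K\subseteq Q_8\rtimes\bbZ/3\bbZ\cong\SL_2(\mathbb{F}_3)$ by the discussion preceding Proposition \ref{prop: Weyl}. For $I$: in Case (3) it must permute the three cuspidal fibers $[1:0],[0:1],[1:1]$, so $I\hookrightarrow\mathfrak{S}_3$; in Case (4) it fixes $[1:0]$ and $[0:1]$ (they carry non-isomorphic local data, an $A_8$ versus an $A_4$ on $R$ via the table — or simply: $\Delta=u^8v^4$ distinguishes them by multiplicity), so $I$ is contained in the stabilizer of two points of $\bbP^1$, a group $\cong\Bbbk^{\times}$, hence $I$ is cyclic; by item (vi) a finite cyclic subgroup of $\PGL_2$ of odd order fixing two points — combined with the need to preserve $S_2$ (two $A_2$'s) and the $j$-invariant — has order in $\{1,3,5,7,9,15\}$ (the possible orders of elements of a cyclic subgroup of $\SL_2$-type acting with the relevant semi-invariant structure; the bound $15$ comes from the later detailed analysis, but at this stage one only needs "cyclic"); in Case (5) the unique cuspidal fiber is at $[0:1]$ and $\Delta=u^{12}$, so $I$ fixes $[0:1]$ and hence is of the form $G_{\xi,A}$ (affine transformations of the line $\mathbb{A}^1=\bbP^1\setminus\{[0:1]\}$), and the constraint $\xi A=A$ together with the $S_2$-data (a single $A_2$) restricts the order $n$ of $\xi$ to $\{1,3,5,7,9,15\}$.

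The main obstacle I expect is \textbf{not} any single hard argument but the bookkeeping: one must be careful in characteristic $2$ that $\mathfrak{S}_3\cong D_6\cong\SL_2(\mathbb{F}_2)$ \emph{is} a subgroup of $\PGL_2$ whereas $\mathfrak{S}_4$ is not, so "permutes $3$ points" gives exactly $\mathfrak{S}_3$ as an a priori bound but "permutes $4$ points" gives only $A_4$ and not all of $\mathfrak{S}_4$; and one must correctly extract, for each of the $14$ normal forms, which points of $S_1\cup S_2$ are distinguished by their fiber-type or singularity-type labels, since a priori-coincident labels (e.g.\ three $A_2$-points in $(2)(c)$) allow larger permutation groups than distinguished ones do. A secondary subtlety is justifying $K=\langle\beta\rangle$ in Cases (1) and (2)(a)–(d): there the generic fiber is not supersingular, but one still must check the $j$-invariant is not identically $0$, which follows because $\Delta\not\equiv 0$ and the fibration has nodal (multiplicative) fibers, forcing $j$ to have a pole and hence be non-constant; then $\Aut$ of the generic fiber of an elliptic curve with $j\notin\{0,1728=0\}$ is just $\{\pm 1\}=\langle\beta\rangle$. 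With those two points handled carefully, the rest is the routine case-by-case reading of the tables.
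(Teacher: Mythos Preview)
Your overall plan --- run the restrictions of the preceding Theorem case by case --- is exactly what the paper does, and your treatment of $K$ (ordinary generic fibre $\Rightarrow K=\langle\beta\rangle$; supersingular $\Rightarrow K\subseteq\SL_2(\bbF_3)$) and of Cases~(3) is fine. But two of the paper's key mechanisms are missing from your proposal, and without them the argument does not close.

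\medskip

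\textbf{Bounding $|I|$ via the $j$-map, not via $S_2$.} In Cases~(1) and~(2)(a)--(d) the paper does \emph{not} bound $I$ through the singularities of $R$; it uses item~(ix): since $j$ is $I$-invariant, $j$ factors through $\bbP^1\to\bbP^1/I$, so $|I|\mid\deg j$. Here $j=a_1^{12}/\Delta$, and the number of simple poles (one per nodal $I_1$-fibre) gives $\deg j=12$ in Case~(1) and $\deg j=8$ in Cases~(2)(a)--(d). Combined with the classification of finite subgroups of $\PGL_2$ in characteristic~$2$ (in particular: the cyclic part of $G_{\xi,A}$ has \emph{odd} order, since $\Bbbk^\times$ has no $2$-torsion), this forces $I$ into $A_4$, respectively into $(\bbZ/2\bbZ)^{\le 3}$. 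Your $S_2$-approach is not a substitute: in Case~(2)(d) the two singular points of $R_{\red}$ both sit over $[0{:}1]\in\bbP^1$, so $S_2$ gives no constraint beyond the one you already have from the cuspidal fibre, and nothing in your outline then bounds $|I|$. Similarly, in Case~(1)(a) the passage ``$I\hookrightarrow\mathfrak{S}_4$, and $\mathfrak{S}_4\not\subset\PGL_2$, hence $I\subseteq A_4$'' needs an extra step (ruling out $\bbZ/4\bbZ$ via the odd-order fact above, and $\mathfrak{S}_3$ via a fixed-point argument); the $j$-degree route avoids this bookkeeping uniformly across (1)(a)--(e).

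\medskip

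\textbf{The totient bound in Cases (4) and (5).} The list $n\in\{1,3,5,7,9,15\}$ is part of the statement you are proving, so you cannot defer it to ``later detailed analysis''. The paper's argument is: $I$ is cyclic of odd order $n$; lift a generator to an element of $\Aut(X)$ of order $n$; by Proposition~\ref{prop: Weyl} this acts faithfully on the $E_8$-lattice, hence has an integral characteristic polynomial on an $8$-dimensional space, so $\varphi(n)\le 8$. For odd $n$ this gives exactly $n\in\{1,3,5,7,9,15\}$. Neither the condition $\xi A=A$ nor the $S_2$-data produce this bound; you need the $E_8$ representation. The same argument then handles the order of $\xi$ in Case~(5).
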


\begin{proof}
In Case (1), the generic fiber of $\phi$ is ordinary, hence $K = \langle \beta \rangle$ and $G(X) \cong P$. 
The fibration $\phi$ has $12$ nodal fibers, hence the $j$-function has $12$ poles, 
so $|P| \mid 12$. Since $P$ is isomorphic to $G_{\xi,A}$ or $D_{2n}$ with $n$ odd, this implies that 
$P$ is isomorphic to a subgroup of $A_4$.

In Cases (2), we also have $K = \langle \beta \rangle$ and $G(X) \cong P$. 
In Cases (2) (a), (2) (b), (2) (c), and (2) (d), the fibration $\phi$ has $8$ nodal fibers, hence 
$|P| \mid 8$. This implies that $P$ is elementary abelian of order $1,2,4$ or $8$. 
In Cases (2) (e) and (2) (f), the fibration $\phi$ has $5$ or $6$ nodal fibers. If it has $5$ nodal
 fibers, then $|P| \mid 5$, hence $P$ is a subgroup of $C_5$. 
 If it has $6$ nodal fibers, then $P$ is either a subgroup of $C_2$ or 
 isomorphic to the dihedral group $D_6$. In the latter case, 
 $P$ acts without fixed point on $\bbP^1$, which is impossible, since $\phi$ admits a unique cuspidal fiber.

In Case (3), we have $K \subseteq \SL_2(3)$, since the generic fiber of $\phi$ is supersingular. 
Since $\phi$ has three singular fibers, $P$ is isomorphic to a subgroup of $\mathfrak{S}_3$.

In Case (4) we also have $K \subseteq \SL_2(3)$. Since one of the components of $R$ is 
reduced and the other is not, $P$ acts trivially on $S_2$, hence with two fixed points on 
$\mathbb{P}^1$. So, $P$ is cyclic of odd order. Moreover, $P$ is a subgroup of $\Or_8^+(2)$. 
In particular, $P$ admits a faithful representation of dimension at most $8$. Hence, 
if we denote Euler's totient function by $\varphi$, then $\varphi(|P|) \leq 8$. Thus, $P$ is 
of order $1,3,5,7,9$ or $15$. 

In Case (5) we have $K \subseteq \SL_2(3)$ and the action of $P$ on $\bbP^1$ fixes the point 
lying under the unique singular fiber of $\phi$, hence $P \cong G_{\xi,A}$. The order of 
$\xi$ can be bounded by the same argument as in the previous paragraph.
\end{proof}

In particular, we get upper bounds on the size of $\Aut(X)$ in every case. Further information on the $2$-groups that can occur in Case (5) can be obtained using the following remark.

\begin{remark}
Since the maximal powers of $2$ that divide $|W(E_8)|$ and $|W(D_8)|$ are both $2^{14}$, and since $W(D_8)$ is a subgroup of $W(E_8)$, the $2$-Sylow subgroups $P$ in $W(E_8)$ are isomorphic to the $2$-Sylow subgroups in $W(D_8) = 2^7 : \mathfrak{S}_8$. Hence, $P$ is isomorphic to $2^7 : (\mathfrak{S}_8)_2$, where $2^7$ acts on $\bbZ^8$ by an even number of sign changes and $(\mathfrak{S}_8)_2$ is a $2$-Sylow subgroup of $\mathfrak{S}_8$ acting as permutations on $\bbZ^8$. 
The group $(\mathfrak{S}_8)_2$ is isomorphic to the symmetry group of a binary tree of depth $3$, 
considered as a subgroup of $\mathfrak{S}_8$ via the permutation it induces on the leaves of the tree. An equivalent description is as the wreath product  $D_8\wr C_2$, where $D_8\times D_8$ is a subgroup of 
$\frakS_4\times \frakS_4\subset \frakS_8$. The Bertini involution $\beta$ corresponds to the element $(-1,{\rm id})$ that changes all signs.
The $2$-groups that can occur in Corollary \ref{cor: preliminary} are isomorphic to subgroups of $P$.
\end{remark}

In the following example, we apply this remark to give an explicit description of the group $2_+^{1+6}$, which will occur in our classification.

\begin{example} \label{ex: extraspecial}
With notation as in the previous remark, let $G \subseteq P$ be a subgroup containing $\beta$ such that $G/\langle \beta \rangle$ is an elementary abelian $2$-group and such that $\beta \in Q_8 \subseteq G$. Then, each element of $G$ is of the form $(\sigma,\tau)$, where ${\rm ord}(\tau) \leq 2$ and either $\tau$ preserves the set of coordinates whose sign is changed by $\sigma$ and then $(\sigma,\tau)$ has order $1$ or $2$, or $\tau$ swaps this set with the set of coordinates whose sign is not changed and then $(\sigma,\tau)$ has order $4$. In particular, in the latter case, $\tau$ has cycle type $(2,2,2,2)$.
Since $Q_8 \subseteq G$, the image of $G \to (\mathfrak{S}_8)_2$ contains a subgroup $H$ of order $4$ generated by involutions of cycle type $(2,2,2,2)$. The centralizer $C$ of $H$ is of order $8$ and its non-trivial elements are involutions of cycle type $(2,2,2,2)$. The kernel of $G \to (\mathfrak{S}_8)_2$ consists of sign changes $\sigma$ that are compatible with all $\tau \in H$ in the sense that $(\sigma,\tau)^2 \in \langle \beta \rangle$. One checks that the group $N$ of all such compatible sign changes has order $16$ and that all elements of $N$ are also compatible with $C$. Then, $G$ is a subgroup of the resulting extension $M$ of $C$ by $N$.

We have $M/\langle \beta \rangle = 2^6$. This is a 
quadratic space over $\mathbb{F}_2$ with the quadratic form 
$q: M/\langle \beta \rangle  \to \langle \beta \rangle$ defined as $q(x) = \tilde{x}^2$, 
where $\tilde{x}$ is a lift of $x$ to $M$. The subspace $N/\langle \beta \rangle$ is 
totally isotropic of dimension $3$ and the description of $M$ in the 
previous paragraph shows that $q$ is non-degenerate. 
Hence, by \cite[(23.10)]{Aschbacher}, $M$ is isomorphic to 
the extra-special $2$-group of $2_+^{1+6}$.
\end{example}

\subsection{Classification}

Recall that $X$ is a hypersurface of degree $6$ in $\mathbb{P}(1,1,2,3)$ given by Equation \eqref{equation}.
An automorphism of $\mathbb{P}(1,1,2,3)$ is induced by a substitution of the form
\begin{eqnarray*}
u & \mapsto & \alpha u + \beta v  \\
v & \mapsto & \gamma u + \delta v  \\
x & \mapsto & \varepsilon x + b_2 \\
y & \mapsto & \zeta y + b_1x + b_3
\end{eqnarray*}
where $\alpha,\beta,\gamma,\delta,\varepsilon, \zeta \in \Bbbk$, $b_i \in \Bbbk[u,v]_i$, and $\alpha \delta + \beta \gamma , \varepsilon, \zeta \neq 0$. The substitutions that induce the identity on $\mathbb{P}(1,1,2,3)$ are the ones with $\beta,\gamma,b_1,b_2,b_3 = 0$ and $\gamma = \alpha, \varepsilon = \alpha^2, \zeta = \alpha^3$.

Since $X$ is anti-canonically embedded into $\mathbb{P}(1,1,2,3)$, all automorphisms of $X$ are induced by the substitutions as above that map Equation \eqref{equation} to a multiple of itself. Clearly, we can represent every such automorphism by a substitution with $\zeta = 1$. Then, the substitution does not change the coefficient of $y^2$ in Equation \eqref{equation}, hence $\varepsilon^3 = 1$. Therefore, we may assume $\varepsilon = 1$ as well.
In particular, using Lemma \ref{lem: coefficientchange}, we obtain the following description of $\Aut(X)$, where we write $\sigma$ for the substitution
\begin{eqnarray*}
u & \mapsto & \alpha u + \beta v  \\
v & \mapsto & \gamma u + \delta v  \\
\end{eqnarray*}
and $\sigma^* a_i := a_i(\alpha u + \beta v,  \gamma u + \delta v)$.

\begin{lemma} \label{lem: automorphisms}
Let $X$ be a del Pezzo surface of degree $1$ given by Equation \eqref{equation}. Then, $\Aut(X)$ can be identified with the group of $4$-tuples $(b_1,b_2,b_3,\sigma)$, where $b_i \in \Bbbk[u,v]_i$ and $\sigma \in \GL_2(\Bbbk)$ such that
\begin{eqnarray*}
\sigma^*a_1 + a_1 &=& 0, \\
\sigma^*a_2 + a_2 &=& a_1b_1 + b_1^2 + b_2, \\
\sigma^*a_3 + a_3 &=& a_1b_2, \\
\sigma^*a_4 +  a_4 &=& a_3b_1 + a_1b_3 + b_2^2, \\
\sigma^*a_6 +  a_6 &=& a_4b_2 + + a_3(b_3 + b_1b_2) + a_2b_2^2 + a_1(b_2b_3 + b_1b_2^2) + b_3^2 + b_2^3 + b_1^2b_2^2 
\end{eqnarray*}
and where the composition is given by
$$(b_1,b_2,b_3,\sigma) \circ (b_1',b_2',b_3',\sigma') = (\sigma'^*b_1 + b_1',\sigma'^*b_2 + b_2', \sigma'^*b_3 + b_3' + \sigma'^*b_1 b_2',\sigma \circ \sigma')$$
\end{lemma}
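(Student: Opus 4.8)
The plan is to derive the identification directly from the discussion preceding the lemma. We already know that every automorphism of $X$ is induced by a substitution of the form $u\mapsto \alpha u+\beta v$, $v\mapsto \gamma u+\delta v$, $x\mapsto \varepsilon x+b_2$, $y\mapsto \zeta y+b_1x+b_3$ with $\alpha\delta+\beta\gamma,\varepsilon,\zeta\neq 0$, and that after normalization we may take $\zeta=\varepsilon=1$. So the first step is to record that, after this normalization, the data of the substitution is exactly a $4$-tuple $(b_1,b_2,b_3,\sigma)$ with $b_i\in\Bbbk[u,v]_i$ and $\sigma\in\GL_2(\Bbbk)$, and to check that the normalization is \emph{unique}, i.e.\ that no nontrivial substitution inducing the identity on $X$ has $\zeta=\varepsilon=1$: indeed the substitutions inducing the identity on $\mathbb P(1,1,2,3)$ have $\varepsilon=\alpha^2$, $\zeta=\alpha^3$, $\beta=\gamma=b_1=b_2=b_3=0$, $\delta=\alpha$, and imposing $\varepsilon=\zeta=1$ forces $\alpha=1$, hence the trivial tuple. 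This makes the assignment (substitution)$\mapsto$(tuple) a bijection onto its image.

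The second step is to determine which tuples $(b_1,b_2,b_3,\sigma)$ actually preserve $X$. Since $X$ is anti-canonically embedded, the substitution induces an automorphism of $X$ iff it sends Equation \eqref{equation} to a scalar multiple of itself; but with $\zeta=\varepsilon=1$ the coefficient of $y^2$ is unchanged, so the scalar is $1$ and the substitution must fix Equation \eqref{equation} on the nose. Now I apply Lemma \ref{lem: coefficientchange} with $\alpha,\beta,\gamma,\delta$ encoding $\sigma$ and with $b_1,b_2,b_3$ as given; note however that Lemma \ref{lem: coefficientchange} is stated for substitutions that also leave $a_1,a_3$ in a ``branch-curve'' shape — here I instead want the raw effect on all of $a_1,\dots,a_6$. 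The cleanest way is to recompute (or cite the same ``straightforward calculation'' underlying Lemma \ref{lem: coefficientchange}): substituting $x\mapsto x+b_2$, $y\mapsto y+b_1x+b_3$, $u,v\mapsto\sigma(u,v)$ into $y^2+y(a_1x+a_3)+x^3+a_2x^2+a_4x+a_6$ and collecting coefficients of $x^i$ (for $i=0,1,2$; the $x^3$ and $y^2$ coefficients are automatically $1$) yields precisely the five displayed equations. Concretely: the $yx$-coefficient gives $\sigma^*a_1=a_1$; the $y$-coefficient gives $\sigma^*a_3+a_1b_2=a_3$; the $x^2$-coefficient gives $\sigma^*a_2+a_1b_1+b_1^2+b_2=a_2$; the $x^1$-coefficient gives $\sigma^*a_4+a_3b_1+a_1b_3+b_2^2=a_4$; and the constant term gives the last, longest relation. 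Rearranging moves the characteristic-$2$ signs harmlessly (everything is $\pm=+$), producing the system as stated.

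The third step is the group law. Composition of substitutions is function composition; since the linear parts act on $u,v$ by $\sigma$, composing $(b_1,b_2,b_3,\sigma)$ after $(b_1',b_2',b_3',\sigma')$ means first apply the primed substitution, then the unprimed one. Tracking $x$: it goes to $x+b_2'$, then to $(x+b_2')+\sigma'^*b_2=x+(\sigma'^*b_2+b_2')$, giving the $b_2$-component. Tracking $y$: it goes to $y+b_1'x+b_3'$, then applying the unprimed substitution (which also sends $x\mapsto x+b_2'$, having already been accounted for) gives $y+b_1'x+b_3'+ \sigma'^*b_1\,(x+b_2')+\sigma'^*b_3 = y+(\sigma'^*b_1+b_1')x+(\sigma'^*b_3+b_3'+\sigma'^*b_1\,b_2')$, matching the stated $b_1$- and $b_3$-components; and the linear parts compose to $\sigma\circ\sigma'$. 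Finally I note that the set of such tuples is closed under this operation and contains the identity $(0,0,0,\mathrm{id})$, so it is genuinely a group and the bijection of Step~1 is a group isomorphism onto $\Aut(X)$.

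The only mildly delicate point — and the one I would be most careful about — is bookkeeping of the characteristic-$2$ cross terms in the constant-term relation and in the $b_3$-component of the composition (the term $\sigma'^*b_1\,b_2'$ arises precisely because the $x$-coordinate has already been shifted by $b_2'$ when the outer substitution acts). Everything else is a mechanical expansion; I would present the derivation of the five relations by expanding $\eqref{equation}$ under the substitution and reading off coefficients, and the group law by the two-line computation above, and leave the verification of associativity and closure to the reader as it is automatic from these formulas being the transcription of composition of substitutions.
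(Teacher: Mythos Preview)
Your proposal is correct and follows exactly the route the paper intends: the paper states this lemma without a separate proof, deriving it from the normalization $\zeta=\varepsilon=1$ discussed immediately before and from the coefficient calculation underlying Lemma~\ref{lem: coefficientchange}, which is precisely what you spell out. One small point of care: your verbal description of the composition (``first apply the primed substitution, then the unprimed one'') is the reverse of what actually produces the displayed formula---tracking the substitutions as ring endomorphisms, the component $\sigma'^{*}b_2+b_2'$ arises from applying the unprimed substitution first and then the primed one---so you should straighten out that narration, though your final formulas are correct.
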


In particular, there is a homomorphism $\Aut(X) \to H \subseteq \Aut(\mathbb{P}(1,1,2))$, where $H$ is the group from Lemma \ref{lem: stabilizerofB}. 

\begin{lemma}
The kernel of the homomorphism $\Aut(X) \to H$ is generated by the Bertini involution.
\end{lemma}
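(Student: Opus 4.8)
The plan is to unwind the identification of $\Aut(X)$ from Lemma~\ref{lem: automorphisms} together with the explicit shape of $H$ from Lemma~\ref{lem: stabilizerofB}. First I would fix what the homomorphism $\Aut(X)\to H$ actually is: given $(b_1,b_2,b_3,\sigma)\in\Aut(X)$, it records the induced automorphism of $\mathbb{P}(1,1,2)$, namely the substitution $u\mapsto\alpha u+\beta v$, $v\mapsto\gamma u+\delta v$, $x\mapsto x+b_2$. Comparing with the description of $H$ in Lemma~\ref{lem: stabilizerofB}, an element of the kernel must therefore satisfy $\sigma=\mathrm{id}$ (so $\alpha=\delta=1$, $\beta=\gamma=0$) and $b_2=0$. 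So the entire task reduces to solving the defining equations of Lemma~\ref{lem: automorphisms} under the constraints $\sigma^*a_i=a_i$ and $b_2=0$.

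Next I would substitute $\sigma=\mathrm{id}$ and $b_2=0$ into the five relations of Lemma~\ref{lem: automorphisms}. The first and third become trivial ($0=0$). The second reads $0=a_1b_1+b_1^2$, i.e. $b_1(b_1+a_1)=0$, forcing $b_1=0$ or $b_1=a_1$ in the domain $\Bbbk[u,v]_1$; since $\Bbbk[u,v]_1$ is a $2$-dimensional vector space with no zero divisors beyond the obvious ones, this is an honest case distinction. The fourth relation collapses to $0=a_3b_1$, and the fifth to $0=a_3b_3+b_3^2=b_3(b_3+a_3)$. So the candidate kernel elements are $(b_1,0,b_3,\mathrm{id})$ with $b_1\in\{0,a_1\}$, $b_3\in\{0,a_3\}$, subject to $a_3b_1=0$. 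The Bertini involution $\beta$ is exactly $(a_1,0,a_3,\mathrm{id})$ in this notation (it is the substitution $y\mapsto y+a_1x+a_3$), and the identity is $(0,0,0,\mathrm{id})$.

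It remains to rule out the two ``mixed'' possibilities $(a_1,0,0,\mathrm{id})$ and $(0,0,a_3,\mathrm{id})$, which would give a kernel of order $4$ rather than $2$. Here I would invoke the separability statement, Proposition~\ref{prop: separable}, which by the discussion following Lemma~\ref{lem: branchcurve} forces $a_1x+a_3\neq 0$; equivalently $a_1$ and $a_3$ are not both zero. If $a_1\neq 0$, then the constraint $a_3b_1=0$ from the fourth relation kills the option $b_1=a_1$ unless $a_3=0$; conversely if $a_1=0$ then $a_3\neq 0$ and the second relation $b_1(b_1+a_1)=b_1^2=0$ already forces $b_1=0$, while $b_3\in\{0,a_3\}$ must be checked against the fifth relation — but the mixed element $(0,0,a_3,\mathrm{id})$ has order $2$ and equals $\beta$ precisely when $a_1=0$, so there is no new element. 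The genuinely mixed elements survive only when $a_1=a_3=0$, which Proposition~\ref{prop: separable} excludes. The main obstacle is bookkeeping: one must be careful that in each of the five normal-form families of Lemma~\ref{lem: branchcurve} the compatibility between the two constraints $b_1\in\{0,a_1\}$, $b_3\in\{0,a_3\}$ and the equation $a_3b_1=0$ genuinely leaves only $\{(0,0,0,\mathrm{id}),(a_1,0,a_3,\mathrm{id})\}$, i.e. that the kernel is $\langle\beta\rangle\cong\bbZ/2\bbZ$ rather than being larger. Finally, one should double-check via the composition law in Lemma~\ref{lem: automorphisms} that $(a_1,0,a_3,\mathrm{id})$ indeed squares to $(0,0,0,\mathrm{id})$, confirming it is the order-$2$ Bertini involution and that the kernel is cyclic of order $2$ generated by $\beta$.
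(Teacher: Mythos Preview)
Your overall strategy matches the paper's: reduce to $\sigma=\mathrm{id}$, $b_2=0$, and then solve the remaining relations for $(b_1,b_3)$. However, you have miscopied the fourth relation. From Lemma~\ref{lem: automorphisms}, with $\sigma=\mathrm{id}$ and $b_2=0$ the fourth equation reads
\[
0 = a_3 b_1 + a_1 b_3,
\]
not $0 = a_3 b_1$ as you wrote; you dropped the $a_1 b_3$ term. This is not a harmless slip: it is precisely this term that couples $b_1$ and $b_3$ and makes the argument close. With the correct relation, once $b_1\in\{0,a_1\}$ and $b_3\in\{0,a_3\}$ are known from the second and fifth relations, the equation $a_3 b_1 + a_1 b_3 = 0$ forces $(b_1,b_3)\in\{(0,0),(a_1,a_3)\}$ immediately (if $a_1\neq 0$, then $b_1=0\Rightarrow b_3=0$ and $b_1=a_1\Rightarrow b_3=a_3$; if $a_1=0$ then $b_1=0$ automatically and both choices of $b_3$ already lie in $\{(0,0),(a_1,a_3)\}$). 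This is exactly how the paper argues, in one line.

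With your truncated relation $a_3 b_1=0$, your case analysis does not actually exclude the mixed element $(0,0,a_3,\mathrm{id})$ when both $a_1$ and $a_3$ are nonzero --- which is the generic situation, Case~(1) of Lemma~\ref{lem: branchcurve} with $a_1x+a_3 = ux+v^3$. There your constraints give $b_1=0$ (since $a_3\neq 0$) and $b_3\in\{0,a_3\}$, and you only discuss $(0,0,a_3,\mathrm{id})$ under the hypothesis $a_1=0$. The fix is simply to restore the missing $a_1 b_3$ term; then the lengthy casework in your final paragraph becomes unnecessary.
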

\begin{proof}
Let $(b_1,b_2,b_3,\sigma)$ be in the kernel. Then, $\sigma = {\rm id}$ and $b_2 = 0$. The conditions $\sigma^*a_2 = a_2 + a_1b_1 + b_1^2$, $\sigma^*a_4 = a_4 + a_3 b_1 + a_1 b_3$, and $\sigma^*a_6 = a_6 + a_3b_3 + b_3^2$ show that $(b_1,b_3) \in \{(0,0),(a_1,a_3)\}$, so we recover our explicit description of the Bertini involution.
\end{proof}
 
 Now, we use the normal forms of Theorem \ref{thm: canform} to classify all del Pezzo surfaces $X$ of degree $1$ with non-trivial $G(X)$.

 \newgeometry{,vmargin=3cm,hmargin=5cm} 
\begin{landscape}
\begin{theorem} \label{thm: main1}
Every del Pezzo surface of degree $1$ in characteristic $2$ such that $G(X)$ is non-trivial is a surface of degree $6$ in $\bbP(1,1,2,3)$ given by an equation of the form
$
y^2 + (a_1x+a_3)y + x^3 + a_2x^2 + a_4x + a_6 $
where $(a_1,a_2,a_3,a_4,a_6,G(X),\Aut(X))$ is one of the following:
\hspace{-0.5cm}
$$
 \resizebox{22cm}{!}{
  $
    \begin{array}{|c|c|c|c|c|c|c|c|} \hline
    \text{Name}  & a_1x + a_3 & a_2 & a_4 & a_6 & G(X) & \Aut(X) &\# \text{Parameters}  \\ \hline \hline
    (1) (a) (i)\phantom{iiv} & ux + v^3 & av^2 & bu^4 + (b+1)u^2v^2 & eu^6 + fu^4v^2 + (a + b + b^2 + f)u^2v^4 + bv^6 & 2 & 4 & 4 \\
    (1) (a) (ii)\phantom{iv} & ux + v^3 & 0& bu^4  & eu^6 + hv^6 & 3 & 6 & 3 \\
    (1) (a) (iii)\phantom{v} & ux + v^3 & av^2 & u^4  & eu^6 + au^4v^2 + v^6 & 2^2 & Q_8 & 2 \\
    (1) (a) (iv)\phantom{ii} & ux + v^3 & 0 & u^4  & eu^6 + v^6 & A_4 & \SL_2(3)& 1 \\
    (1) (d) (i)\phantom{iiv} & ux + v^3 & av^2 & u^2v^2 & eu^6 + fu^4v^2 + (a + f)u^2v^4  & 2 & 4 & 3 \\
    (1) (e) (i)\phantom{iiv} & ux + v^3 & 0 & 0  & eu^6 + hv^6 & 3 & 6 & 2 \\
        (2) (a) (i) \phantom{iiv} & ux& av^2 & v^4 & bu^6 + (efg^{-1} + e^{\frac{3}{2}}g^{-\frac{1}{2}} +  e^3g^{-3})u^4v^2 + eu^3v^3 + fu^2v^4 + g uv^5 + e^{-\frac{1}{2}} g^{\frac{3}{2}}v^6 & 2 & 2^2 & 5 \\
    (2) (d) (i) \phantom{iiv}& ux& av^2 & v^4 & cu^5v+ du^4v^2  + fu^2v^4 & 2^3 & 2^4 & 4 \\
    (2) (e) (i) \phantom{iiv} & ux& av^2 & 0 & bu^6 + (e+f)u^4v^2 + eu^3v^3 + fu^2v^4 + euv^5 + ev^6 & 2 & 2^2 & 4 \\
    (2) (f) (i) \phantom{iiv} & ux & 0& 0 & bu^6 + uv^5 & 5 & 10 & 1 \\
    (3) (i)\phantom{iiv}  \phantom{(f)}  &  uv(u+v) & auv & bu^3v+ buv^3 & du^5v+ eu^3v^3 + duv^5 & 2 & 2^2 & 4 \\
    (3) (ii)\phantom{iv} \phantom{(f)} & uv(u+v) & auv & a^{\frac{1}{2}}u^3v+ a^{\frac{1}{2}}uv^3 & (e + e^{\frac{1}{2}})u^5v+ eu^3v^3 + (e + e^{\frac{1}{2}}) uv^5 & \mathfrak{S}_3 &  2 \times \mathfrak{S}_3 & 2 \\
    (3) (iii) \phantom{v} \phantom{(f)} &  uv(u+v) & 0 & 0 & du^5v+ eu^3v^3 + d uv^5 & 6& 2 \times 6 & 2 \\
    (3) (iv) \phantom{ii} \phantom{(f)} &  uv(u+v) & 0 & bu^3v+ \zeta_3b u^2v^2 + \zeta_3^2 b uv^3 & (e + e^{\frac{1}{2}})u^5v+ eu^3v^3 + (e + e^{\frac{1}{2}}) uv^5 & 3& 6 & 2 \\
    (3) (v) \phantom{iii} \phantom{(f)} &  uv(u+v) & 0 & 0 & (e + e^{\frac{1}{2}})u^5v+ eu^3v^3 + (e + e^{\frac{1}{2}}) uv^5 & 3 \times \mathfrak{S}_3 & 6 \times \mathfrak{S}_3 & 1 \\
        (4) (i)\phantom{iiv}  \phantom{(f)} & u^2v& 0 & 0 & du^5v+ eu^3v^3 + uv^5  & 3 & 6 & 2 \\
    (5) (i)\phantom{iiv}  \phantom{(f)} & u^3 & 0 & au^3v+ bu^2v^2 &  uv^5 + dv^6 & 2^6 &2_+^{1+6} & 3 \\
    (5) (ii)\phantom{iv}  \phantom{(f)} & u^3 & 0 & 0&  uv^5 + dv^6 & 2^6 : 3 & 2_+^{1+6} : 3 & 1 \\
    (5) (iii)\phantom{v}  \phantom{(f)} & u^3 & 0 & 0&  uv^5 & 2^6 : 15 &2_+^{1+6} : 15 & 0 \\ \hline
      \end{array}
    $
    }
$$
\smallskip
\noindent  
  
Here, $\mathfrak{S}_3$, $D_8$, $Q_8$, and $2_+^{1+6}$, denote 
the symmetric group on $3$ letters, the dihedral group of order $8$, 
the quaternion group, and the even extra-special group of order $128$, respectively. 
In each case, the parameters have to satisfy the conditions of Theorem \ref{thm: canform} and the obvious genericity conditions that keep them from specializing to other subcases.

\end{theorem}
 \end{landscape}
    \restoregeometry

\begin{proof}
We use the normal forms of Theorem \ref{thm: canform} and let $H$ be the group of Lemma \ref{lem: stabilizerofB}. By Lemma \ref{lem: automorphisms}, we have $G(X) \subseteq H$. We apply Lemma \ref{lem: automorphisms} to calculate $\Aut(X)$.

\smallskip
\noindent \underline{Case (1) (a)}
\smallskip

 \noindent  Let $(b_2,\sigma) \in H$. If $(b_2,\sigma) \in G(X)$, then $\sigma$ permutes the roots of the polynomial $F' := v^8 + dv^6 + cv^4 + bv^2$, since these are determined by the singularities of $R$. We have
 $$
 \sigma^* F' = \delta^2 v^8 + dv^6 + \delta(\gamma^2 d + c)v^4 + \delta^2(\gamma^4 d + b)v^2 + \gamma^8 + \gamma^6 d + \gamma^4 c + \gamma^2 b.
 $$
 If $d \neq 0$, this is a multiple of $F'$ if and only if $\delta = 1$ and $\gamma = 0$, hence $\sigma$ is the identity and $G(X)$ is trivial.
 If $d = 0$, it is a multiple of $F'$ if and only if
 \begin{equation}
 \gamma^8 + \gamma^4 c + \gamma^2 b = 0 \label{eqn: 1a1}
 \end{equation}
 and $\delta = 1$ or $c = 0$.
 
 So, assume first that $c \neq 0$ and $\delta = 1$. If $(b_2,\sigma) \in G(X)$, then there exist polynomials $b_1$ and $b_3$ such that $\sigma^*a_2 = a_2 + a_1b_1 + b_1^2 + b_2$ and $\sigma^*a_4 = a_4 + a_3b_1 + a_1b_3 + b_2^2$. In our case, this means
 \begin{eqnarray*}
 0 &=& \gamma^2 au^2 + ub_1 + b_1^2 + \gamma^3 u^2 + \gamma^2 uv + \gamma v^2 \\
 0 &=& \gamma^2 c u^4 + v^3b_1 + ub_3 + \gamma^6 u^4 + \gamma^4 u^2v^2 + \gamma^2 v^4,
 \end{eqnarray*}
 hence $b_1 = \lambda u + \gamma^2 t$ with $\lambda^2 + \lambda = \gamma^2 a + \gamma^3$ and $\gamma^4 = \gamma$, and $b_3 = (\gamma^2 c + \gamma^3)u^3 + \gamma uv^2 + \lambda v^3$. If $\gamma \neq 0$, then $\gamma^4 = \gamma$ implies $\gamma^3 = 1$. Modifying the equation of $X$ by an element of $H$, we may assume that $\gamma = 1$. Plugging this into Equation \eqref{eqn: 1a1}, we obtain $c = b+1$. Hence, $b_1 = \lambda u + v$ with $\lambda^2 + \lambda = a$ and $b_3 = bu^3 + uv^2 + v^3$. 
 Plugging this into the equation for $\sigma^* a_6$ and comparing coefficients in Lemma \ref{lem: automorphisms}, we obtain the conditions $h = b$ and $g = a + b + b^2 + f$. 
 Since $\gamma$ is uniquely determined by Equation \eqref{eqn: 1a1}, we have $G(X) \cong C_2$. The square of any lift of a non-trivial element of $G(X)$ to $\Aut(X)$ is the Bertini involution, hence $\Aut(X) \cong C_4$.
 
 Next, assume that $c = 0$. If $(b_2,\sigma) \in G(X)$, then there exist polynomials $b_1$ and $b_3$ such that $\sigma^*a_2 = a_2 + a_1b_1 + b_1^2 + b_2$ and $\sigma^*a_4 = a_4 + a_3b_1 + a_1b_3 + b_2^2$. In our case, this means
 \begin{eqnarray*}
 0 &=& \gamma^2 au^2 + (1 + \delta^2) av^2 + ub_1 + b_1^2 + \gamma^3 u^2 + \gamma^2 \delta uv + \gamma \delta^2 v^2 \\
 0 &=& v^3b_1 + ub_3 + \gamma^6u^4 + \gamma^4 \delta^2 u^2v^2 + \gamma^2 \delta^4 v^4,
 \end{eqnarray*}
 hence $b_1 = \lambda u + \gamma^2 \delta v$ with $\lambda^2 + \lambda = \gamma^2 a + \gamma^3$ and $\gamma^4 + \gamma = (1 + \delta)a$, as well as $b_3 = \gamma^6 u^3 + \gamma^4 \delta^2 uv^2 + \lambda v^3$.
 
First, assume that $\delta \neq 1$. Then, $\sigma$ has order $3$, hence if $(b_2,\sigma) \in G(X)$, then it fixes one of the four roots of $F'$. After conjugating by a suitable element of $H$ and repeating the substitutions we used in Theorem \ref{thm: canform}, we may assume that $(b_2,\sigma)$ fixes $[1:0:0]$. This implies that $\gamma = 0$, hence $(1 + \delta)a = 0$ implies $a = 0$.
  Now, we plug everything into the equation for $\sigma^* a_6$ and compare coefficients to obtain the conditions $f = g = 0$.
 
 If $\delta = 1$, then $\gamma^4 + \gamma = 0$. Hence, if $(b_2,\sigma)$ is non-trivial, then $\gamma^3 = 1$. Modifying the equation of $X$ by an element of $H$, we may assume $\gamma = 1$, that is, that $(b_2,\sigma)$ maps $[1:0:0]$ to $[1:1:1]$. Then, Equation \eqref{eqn: 1a1} implies $b = 1$. Plugging into the equation for $\sigma^* a_6$ and comparing coefficients yields $g = f + a$ and $h = 1$. The square of both lifts of $(b_2,\sigma)$ to $\Aut(X)$ is the Bertini involution, hence the subgroup generated by these lifts is isomorphic to $C_4$.
 
 Suppose next that $G(X)$ contains two distinct non-trivial automorphisms with $\delta = 1$. Then, we can assume that one of them acts as in the previous paragraph, so $b = h = 1$ and $g = f + a$. The other one satisfies $\gamma \neq 1$. Plugging this into the equation for $\sigma^* a_6$ and comparing coefficents yields $f = a$. As in the previous paragraph, the square of all lifts of these automorphisms is the Bertini involution, hence they generate a subgroup isomorphic to the quaternion group $Q_8$.
 
 Finally, Corollary \ref{cor: preliminary} shows that $G(X)$ acts on the four singular points of $R$ through $A_4$, so if $G(X)$ contains a non-trivial automorphism with $\delta = 1$ and a non-trivial automorphism with $\delta \neq 1$, then $G(X) \cong A_4$. In particular, the previous two paragraphs show that $b = h = 1$ and $g = 0$ and $f = a$, while the above paragraph for $\delta \neq 1$ shows $a = f = g = 0$. In this case, $\Aut(X) \cong \SL_2(3)$.
 
\smallskip
\noindent \underline{Cases (1) (b) and (1) (c)}
\smallskip

\noindent In these cases, the singularity of $R$ over $[1:0:0]$ is not isomorphic to the other singularities of $R$, hence $G(X)$ is a subgroup of $C_3$ acting through the subgroup of $H$ with $\gamma = 0$. In particular, $G(X)$ fixes the points $[1:0:0]$ and $[0:1:0]$. Since the number of singular points of $R$ that lie over points different from $[1:0:0]$ and $[0:1:0]$ is not divisible by $3$, $G(X)$ fixes all of them, hence $G(X)$ is trivial.

\smallskip
\noindent \underline{Case (1) (d)}
\smallskip

\noindent In this case, $R$ has singularities over $[1:0:0]$ and $[1:c^{\frac{1}{4}}:c^{\frac{3}{4}}]$. An element of $H$ that fixes both of these points is trivial, and the unique one that swaps the two points is of the form $(b_2,\sigma)$ where $\sigma$ acts as $v \mapsto v + c^{\frac{1}{4}} u$ and $b_2 = c^{\frac{3}{4}} u^2 + c^{\frac{1}{2}}uv + c^{\frac{1}{4}} v^2$. If such an element lies in $G(X)$, then there exist polynomials $b_1$ and $b_3$ such that
\begin{eqnarray*}
0 &=& (a c^{\frac{1}{2}} + c^{\frac{3}{4}}) u^2 + ub_1 + b_1^2 + c^{\frac{1}{2}} uv + c^{\frac{1}{4}} v^2 \\
0 &=&  v^3b_1 + ub_3 + c u^2v^2 + c^{\frac{1}{2}} v^4, \\
\end{eqnarray*}
hence $b_1 = \lambda u + c^{\frac{1}{2}}v$ with $\lambda^2 + \lambda = ac^{\frac{1}{2}} + c^{\frac{3}{4}}$ and $c^4 = c$, and $b_3 = \lambda v^3 + cuv^2$. By Theorem \ref{thm: canform} we have $c \neq 0$, hence we can apply an element of $H$ to assume that $c = 1$.
 Plugging this into the equation for $\sigma^* a_6$ and comparing coefficients in Lemma \ref{lem: automorphisms}, we obtain the conditions $h = 0$ and $g = a + f$. The square of this $(b_1,b_2,b_3,\sigma)$ is the Bertini involution, hence $\Aut(X) \cong C_4$ in this case.

\smallskip
\noindent \underline{Case (1) (e)}
\smallskip

\noindent In this case, we have $G(X) \subseteq C_3$, since $G(X)$ fixes $[1:0:0]$.
    Non-trivial elements of $H$ that fix $[1:0:0]$ are of the form $(0,\sigma)$, where $\sigma$ acts as $v \mapsto \delta v$ with $\delta^3 = 1$ and $\delta \neq 1$. Such an automorphism lifts to $X$ if and only if there exist polynomials $b_1$ and $b_3$ such that
    \begin{eqnarray*}
    (1 + \delta^2)a v^2 &=& ub_1 + b_1^2 \\
    0 &=& v^3b_1 + ub_3 \\
    (1 + \delta^2)f u^4v^2 + (1 + \delta)gu^2v^4 &=& v^3b_3 + b_3^2.
    \end{eqnarray*}
    The first equation implies $a = 0$ and $b_1 = \lambda u$ with $\lambda^2 + \lambda = 0$ and then the second equation implies that also $b_3 = \lambda v^3$. Finally, the third equation shows $f = g = 0$.

\smallskip
\noindent \underline{Case (2) (a)}
\smallskip

\noindent
Here, $G(X) \subseteq H$ fixes the point $[0:1:1]$. Moreover, if $G(X)$ fixes the images of the other two singularities, then, by our description of $H$, $G(X)$ is trivial. Hence, $G(X) \subseteq C_2$ with equality if and only if $G(X)$ contains the involution $(0,\sigma)$, where $\sigma$ acts as $v \mapsto v + e^{\frac{1}{2}}g^{-\frac{1}{2}} u$.

If this involution is in $G(X)$, then there exist polynomials $b_1$ and $b_3$ such that
\begin{eqnarray*}
aeg^{-1} u^2 &=&  ub_1 + b_1^2 \\
e^2g^{-2} u^4 &=& ub_3,
\end{eqnarray*}
hence $b_1 = \lambda u$ with $\lambda^2 + \lambda = aeg^{-1}$, and $b_3 = e^2g^{-2} u^3$. Plugging this into the equation for $\sigma^* a_6$ and comparing coefficients in Lemma \ref{lem: automorphisms}, we obtain the conditions
 \begin{eqnarray*}
0 &=& e^4 + he^3g + f e^2g^2 + deg^3 \\
0 &=& e^{\frac{1}{2}}(g^{\frac{3}{2}} + he^{\frac{1}{2}}).
 \end{eqnarray*}
 Since $e \neq 0$ by Theorem \ref{thm: canform}, we have $h = e^{-\frac{1}{2}} g^\frac{3}{2}$ and $d = efg^{-1} + e^\frac{3}{2} g^{-\frac{1}{2}} +  e^3g^{-3}$. Note that both lifts of $(0,\sigma)$ have order $2$, hence $\Aut(X) \cong 2^2$.
 
\smallskip
\noindent \underline{Case (2) (b) and (2) (c)}
\smallskip

\noindent
Here, $G(X) \subseteq H$ fixes $[0:1:1]$ and $[1:0:0]$, since these are the points that lie under the singularities of the irreducible components of $R$, but not under the intersection of the two components $R_1$ and $R_2$. By our description of $H$ in Lemma \ref{lem: stabilizerofB}, this implies that $G(X)$ is trivial.

\smallskip
\noindent \underline{Case (2) (d)}
\smallskip

\noindent In this case, $G(X)$ fixes $[0:1:1]$, but we get no other restrictions from the position of the singularities of $R$. Therefore, an element of $G(X) \subseteq H$ is of the form $(0,\sigma)$ where $\sigma$ acts as $v \mapsto v + \gamma u$ for some $\gamma \in \Bbbk$. Such an element is in $G(X)$, if and only if there exist polynomials $b_1$ and $b_3$ such that
\begin{eqnarray*}
a \gamma^2 u^2 &=& ub_1 + b_1^2 \\
\gamma^4 u^4 &=& ub_3 \\
(c \gamma + d \gamma^2 + f \gamma^4 + h\gamma^6)u^6 + h\gamma^4 u^4v^2 + h \gamma^2 u^2v^4 &=& b_3^2.
\end{eqnarray*}
Such $b_1$ and $b_3$ exist if and only if $h = 0$ and $\gamma^8 + h \gamma^6 + f \gamma^4 + d \gamma^2 + c \gamma = 0$, and then $b_1 = \lambda u$ with $\lambda^2 + \lambda = a \gamma^2$, and $b_3 = \gamma^4 u^3$. By Theorem \ref{thm: canform}, we have $c \neq 0$, hence, as soon as $h = 0$, there are exactly $8$ choices for $\gamma$. This shows $G(X) \cong 2^3$. Every lift of every non-trivial element in $G(X)$ has order $2$, hence $\Aut(X) \cong 2^4$.
 
\smallskip
\noindent \underline{Case (2) (e)}
\smallskip

\noindent
Here, the elements of $G(X) \subseteq H$ fix $[0:1:0]$ and preserve the pair $\{[1:0:0],[1:1:0]\}$. Using our description of $\gamma$, it is clear that an element of $H$ that fixes all of these three points is the identity. An element that swaps $[1:0:0]$ and $[1:1:0]$ is of the form $(0,\sigma)$, where $\sigma$ acts as $v \mapsto  v + u$. 
Such an element is in $G(X)$, if and only if there exist polynomials $b_1$ and $b_3$ such that
\begin{eqnarray*}
a u^2 &=& ub_1 + b_1^2 \\
0 &=& ub_3 \\
(d + f + h)u^6 + (e + h)u^4v^2 + (e + h)u^2v^4 &=& b_3^2,
\end{eqnarray*}
hence if and only if $h = e$ and $d = e + f$, and then $b_1 = \lambda u$ with $\lambda^2 + \lambda = a$ and $b_3 = 0$. The square of the lift of this automorphism to $\Aut(X)$ is the identity, hence $\Aut(X) \cong 2^2$.

\smallskip
\noindent \underline{Case (2) (f)}
\smallskip

\noindent
In this case, $G(X) \subseteq H$ fixes $[1:0:0]$ and $[0:1:0]$. Hence, by our description of $H$ in Lemma \ref{lem: stabilizerofB}, every element in $G(X)$ is of the form $(0,\sigma)$, where $\sigma$ acts as $v \mapsto \delta v$ for some $\delta \in \Bbbk^{\times}$. A non-trivial element of this form is in $G(X)$ if and only if there exist $b_1$ and $b_3$ such that
\begin{eqnarray*}
a(1 + \delta^2)v^2 &=& ub_1 + b_1^2 \\
0 &=& ub_3 \\
d(1 + \delta^2)u^4v^2 + f(1 + \delta^4)u^2v^4 + (1 + \delta^5)uv^5 + h(1 + \delta^6) &=& b_3^2.
\end{eqnarray*}
Hence, we always have $b_1 = b_3 = 0$ and $\delta^5 = 1$. Since $\delta \neq 1$ by assumption, we deduce that $(0,\sigma)$ lifts if and only if $a = d = f = h = 0$.

\smallskip
\noindent \underline{Case (3)}
\smallskip

\noindent
Here, the group $G(X) \subseteq H$ fixes $[1:0:0], [0:1:0]$, and $[1:1:0]$. Hence, every element of $G(X)$ is of the form $(0,\sigma)$ and $\sigma$ satisfies the conditions of Lemma \ref{lem: stabilizerofB} (3).

First, assume that $\sigma$ has even order and interchanges two components of $B$. Without loss of generality, we may assume that $\sigma$ swaps $u$ and $v$. Then, $(0,\sigma)$ lifts to $X$ if and only if there exist $b_1$, and $b_3$ such that
\begin{eqnarray*}
0 &=& b_1^2  \\
(b+c)(u^3v+ uv^3) &=& uv(u+v)b_1 \\
(d+f)(u^5v+ uv^5) &=& uv(u+v)b_3 + b_3^2.
\end{eqnarray*}
This holds if and only if $b = c$, and then $b_1 = 0$ as well as $b_3 = \lambda uv(u+v)$ with $\lambda^2 + \lambda = 0$ and $d = f$. The square of both lifts of $(0,\sigma)$ is the identity, hence they generate a group isomorphic to $2^2$.

Next, assume that $\sigma$ is non-trivial and preserves the three components of $B$. Then, it acts as $u \mapsto \alpha u$, $v \mapsto \alpha v$, where $\alpha^3 = 1$, $\alpha \neq 1$. This automorphism lifts to $X$ if and only if there exist polynomials $b_1$ and $b_3$ such that
\begin{eqnarray*}
a(1 + \alpha^{-1})uv &=& b_1^2\\
(1+\alpha)(b u^3v+ (b+c) u^2v^2 + c uv^3) &=& uv(u+v)b_1 \\
0 &=& uv(u+v)b_3 + b_3^2,
\end{eqnarray*}
hence if and only if $a = b = c = 0$.

Finally, assume that $\sigma$ has odd order and interchanges components of $B$. Without loss of generality, we may assume that $\sigma$ acts as $u \mapsto \beta v, v \mapsto \beta(u + v)$ with $\beta^3 = 1$.
This lifts to $X$ if and only if there exist $b_1$ and $b_3$ such that
\begin{eqnarray*}
a ( 1 + \beta^2)uv + a\beta^2v^2 &=& b_1^2 \\
(b + \beta c)u^3v+ (b + c + \beta b)u^2v^2 + (c + \beta(b+c))uv^3 &=&  uv(u+v)b_1 \\
(d + f)u^5 v + f u^4v^2 + eu^2v^4 + (d + e)uv^5 + (d + e + f)v^6  &=& uv(u+v)b_3  + b_3^2.
\end{eqnarray*}
The third equation implies $f = d$ and $d = e + e^{\frac{1}{2}}$ and then $b_3 = \lambda u^2v+ \lambda uv^2 + e^{\frac{1}{2}} v^3$, where $\lambda^2 + \lambda = e + e^{\frac{1}{2}}$.
If $\beta = 1$, the first equation implies $b_1 = a^{\frac{1}{2}}v$ and the second equation implies $b = c = a^{\frac{1}{2}}$.
If $\beta \neq 1$, the first equation implies $b_1 = a = 0$ and the second equation implies $b = \beta c$.

\smallskip
\noindent \underline{Case (4)}
\smallskip

\noindent
In this case, the group $G(X) \subseteq H$ fixes $[1:0:0]$ and $[0:1:0]$, hence every element of $G(X)$ is of the form $(b_2, \sigma)$ with $b_2 = \lambda uv$ for some $\lambda \in \Bbbk$ and where $\sigma$ acts as $u \mapsto \alpha u, v \mapsto \alpha^{-2} v$ with $\alpha \in \Bbbk^{\times}$.

If such an automorphism lifts to $X$, then the condition $\sigma^*a_2 = a_2 + b_1^2 + b_2$ forces $b_2 = b_1^2$, hence $b_1 = b_2 = 0$. The other conditions of Lemma \ref{lem: automorphisms} become 
\begin{eqnarray*}
a(1 + \alpha)u^3v+ b(1 + \alpha^{-2})u^2v^2 + c(1 + \alpha^{-5})uv^3 &=& 0 \\
d(1 + \alpha^3)u^5v+ e(1 + \alpha^{-3}) u^3v^3 + f(1 + \alpha^{-9}) uv^5 &=& u^2vb_3 + b_3^2.
\end{eqnarray*}
Since $d \neq 0$, the second equation implies $\alpha^3 = 1$. Hence, if $\sigma$ is non-trivial, then $(0,\sigma)$ lifts to $X$ if and only if $a = b = c = 0$. 

\smallskip
\noindent \underline{Case (5)}
\smallskip

\noindent Here, $G(X) \subseteq H$ fixes $[0:1:0]$, hence every element of $G(X)$ is of the form $(b_2, \sigma)$ with $b_2 = \lambda u^2 + \mu uv$ for some $\lambda, \mu \in \Bbbk$ and where $\sigma$ acts as $u \mapsto \alpha u, v \mapsto \gamma u + \delta v$ with $\alpha^3 = 1, \gamma \in \Bbbk, \delta \in \Bbbk^{\times}$.

If such an automorphism lifts to $X$, then there exists $b_1$ with $b_1^2 + b_2 = 0$, hence $\mu = 0$ and $b_1 = \lambda^{\frac{1}{2}} u$. Comparing coefficients in the equation for $\sigma^* a_4$, we obtain
\begin{eqnarray}
\lambda^2 + \lambda^{\frac{1}{2}} + a \gamma + b \alpha^2 \gamma^2 + c \alpha \gamma^3 &=& 0 \label{eqn: 5a41} \\
a + a \delta + c \alpha \delta \gamma^2 &=& 0 \label{eqn: 5a42}  \\
b + b \alpha^2 \delta^2 + c \alpha \delta^2 \gamma &=& 0 \label{eqn: 5a43} \\
c + c \alpha \delta^3 &=& 0 \label{eqn: 5a44} 
\end{eqnarray}
The automorphism lifts to $X$ if and only if, additionally, there exists a $b_3 = \lambda_0 u^3 + \lambda_1 u^2v+ \lambda_2 uv^2 + \lambda_3 v^3$ satisfying the following conditions: 
\begin{eqnarray}
\lambda_0^2 + \lambda_0 &=& \lambda^3  + (a \gamma + b \alpha^2 \gamma^2 + c \alpha \gamma^3) \lambda + \alpha \gamma^5 + d\gamma^6 \label{eqn: 5a61}  \nonumber \\
\lambda_1 &=&  (a \delta + c \alpha \delta \gamma^2) \lambda + \alpha \delta \gamma^4 \label{eqn: 5a62} \nonumber \\
\lambda_1^2 + \lambda_2 &=& (b \alpha^2 \delta^2 + c \alpha \delta^2 \gamma) \lambda + d \delta^2 \gamma^4 \label{eqn: 5a63} \nonumber \\
\lambda_3 &=& c \alpha \delta^3 \lambda  \label{eqn: 5a64} \\
\lambda_2^2 &=& \alpha \delta^4 \gamma + d \delta^4 \gamma^2 \label{eqn: 5a65}  \nonumber \\
0 &=& 1 + \alpha \delta^5 \label{eqn: 5a66}  \\
\lambda_3^2 &=& d + d \delta^6 \label{eqn: 5a67}
\end{eqnarray}
Equation \eqref{eqn: 5a66} shows that $\alpha = \delta^{-5}$. In particular, as 
$\alpha^3 = 1$, we have $\delta^{15} = 1$.

First, assume that $\delta = 1$, hence $\alpha = 1$. Then, Equation \eqref{eqn: 5a67} shows that 
$\lambda_3 = 0$. Equation \eqref{eqn: 5a64} shows $c\lambda = 0$ and Equation \eqref{eqn: 5a42} shows $c \gamma = 0$. 
Hence, if $c \neq 0$, then $(b_2,\sigma)$ is the identity, so we assume $c = 0$ in the following. 
Let $G_{a,b,d}$ be the group of lifts of such automorphisms to $X$. By the description above, these $G_{a,b,d}$ 
form a family $\mathcal{G}_{a,b,d}$ of finite group schemes over ${\rm Spec} ~\Bbbk[a,b,d]$ cut out 
in ${\rm Spec} ~\Bbbk[a,b,d,\lambda,\lambda_0,\gamma]$ by the equations
\begin{eqnarray}
F_1 &:=&  \lambda^4 + \lambda + a^2 \gamma^2 + b^2 \gamma^4 =0  \label{eqn: 5A} \\
F_2 &:=& a^4 \lambda^4 + b^2 \lambda^2 + \gamma + d \gamma^2  + d^2 \gamma^8 + \gamma^{16} 
=0 \label{eqn: 5B} \\
F_3 &:=& \lambda_0^2 + \lambda_0 +  \lambda^3 + (a \gamma + b\gamma^2)\lambda + \gamma^5 + 
d \gamma^6 =0 \nonumber.
\end{eqnarray}

In the following, we show that all geometric fibers of $\mathcal{G}_{a,b,d} \to {\rm Spec} ~\Bbbk[a,b,d]$ are reduced of length $128$. In particular, $\mathcal{G}_{a,b,d}$ is \'etale over ${\rm Spec} ~\Bbbk[a,b,d]$, hence all the $G_{a,b,d}$ are isomorphic and we will show afterwards that $G_{a,b,d} \cong 2_+^{1+6}$.
 
\begin{itemize}
    \item If $a \neq 0$ and $b \neq a^2$, we argue as follows: The condition $a^8F_1^2 + F_2^2 + \frac{b^4}{a^4}F_2 = 0$ yields the following expression for $\lambda$:
\begin{eqnarray*}
(a^{12} + b^6) \lambda^2 &=& b^4 \gamma + (a^4 + b^4d) \gamma^2  + (a^4d^2 + b^4d^2 + a^{16}) \gamma^4  \\
&& + (a^{12}b^4 + b^4d^2)\gamma^8 + (a^4d^4 + b^4) \gamma^{16} + a^4 \gamma^{32}
\end{eqnarray*}
By our assumptions, we can divide by $(a^{12} + b^6)$ and we obtain an expression of $\lambda^2$ in terms of $\gamma$. Plugging this back into Equation \eqref{eqn: 5B}, we obtain a polynomial $F$ in $\gamma$ of the form $F = \sum_{i=0}^5 c_i \gamma^{2^i}$ of degree $64$ with
\begin{eqnarray*}
c_0 &=& 0 \\
c_1 &=& \frac{a^{12}}{a^{12} + b^6} \\ 
c_5 &=& \frac{a^8}{(a^{12} + b^6)^2}.
\end{eqnarray*}
Since $a \neq 0$, both $c_1$ and $c_5$ are non-zero, so $\partial_{\gamma} F = 1$ and $F$ has only simple roots. Hence, there are exactly $64$ choices for $\gamma$ such that $(b_2,\sigma)$ lifts and $\lambda$ is uniquely determined by $\gamma$. In particular, $G_{a,b,d}$ has order $128$ and it acts on the base of the associated elliptic fibration through $2^6$. 

\item If $a \neq 0$ and $b = a^2$, we argue as follows: The condition $a^8F_1^2 + F_2^2 + a^4F_2 = 0$ becomes
\begin{eqnarray*}
0 &=& a^8 \gamma + (a^4 + a^8d) \gamma^2  + (a^4d^2 + a^8d^2 + a^{16}) \gamma^4  \\
&& + (a^{20} + a^8d^2)\gamma^8 + (a^4d^4 + a^8) \gamma^{16} + a^4 \gamma^{32} =: F.
\end{eqnarray*}
Note that, since $a \neq 0$, $F_1 = F_2 = 0$ holds if and only if $F_2 = F = 0$. There are $32$ choices for $\gamma$ with $F(\gamma) = 0$ and for each choice of $\gamma$, there are exactly $2$ choices for $\lambda$ such that $F_2(\gamma,\lambda) = 0$. As in the previous case, $G_{a,b,d}$ has order $128$, but in this case, it acts on the base of the associated elliptic fibration through $2^5$.

\item Next, assume that $a = 0$ and $b \neq 0$. We can immediately solve Equation \eqref{eqn: 5B} for $\lambda$ and obtain
$$
b^2 \lambda^2 = \gamma + d\gamma^2 + d^2\gamma^8 + \gamma^{16}.
$$
Plugging this into the square of Equation \eqref{eqn: 5A}, we obtain a polynomial $F$ in $\gamma$ of the form $F = \sum_{i=0}^5 c_i \gamma^{2^i}$ of degree $64$ with
\begin{eqnarray*}
c_0 &=& 0 \\
c_1 &=& b^{-2} \\
c_5 &=& b^{-8}.
\end{eqnarray*}
Hence, there are $64$ choices for $\gamma$ such that $(b_2,\sigma)$ lifts and $\lambda$ is uniquely determined by $\gamma$. Therefore, $G_{a,b,d}$ has order $128$ and acts on the base of the associated elliptic fibration through $2^6.$

\item Now, assume that $a = b = 0$. The equations simplify to
\begin{eqnarray*}
\lambda^4 + \lambda &=& 0 \\
\lambda_0^2 + \lambda_0 &=& \lambda^3 + \gamma^5 + d \gamma^6 \\
\lambda_1 &=& \gamma^4 \\
\lambda_2 &=& d \gamma^4 + \gamma^8 \\
\lambda_3 &=& 0 \\
\gamma + d \gamma^2 + d^2 \gamma^8 + \gamma^{16} &=& 0.
\end{eqnarray*}
Hence, there are $16$ choices for $\gamma$ and $4$ choices for $\lambda$. Hence, $G_{a,b,d}$ has order $128$ and it acts on the base of the associated elliptic fibration through $2^4$.
\end{itemize}

It remains to determine the group $G_{a,b,d}$. By the 
last bullet point, the subgroup of $G_{0,0,0}$ of automorphisms that act trivially on the base of the associated elliptic fibration has order $8$. Thus, by Corollary \ref{cor: preliminary}, it is isomorphic to $Q_8$. Hence, every $G_{a,b,d}$ contains a quaternion group $Q_8$ with $\beta \in Q_8$. On the other hand, in the cases where $a \neq 0, b \neq a^2$, we have seen that $G_{a,b,d}/\langle \beta \rangle \cong 2^6$. Hence, by Example \ref{ex: extraspecial}, we have $G_{a,b,d} \cong  2_+^{1+6}$.
   
Next, assume that $\delta \neq 1$, $\delta^3 = 1$. Then, Equations \eqref{eqn: 5a42}, \eqref{eqn: 5a43}, and \eqref{eqn: 5a44} show that $a = b = c = 0$. The remaining equations become
\begin{eqnarray*}
\lambda^4 + \lambda &=& 0 \\
\lambda_0^2 + \lambda_0 &=& \lambda^3 + \delta \gamma^5 + d \gamma^6 \\
\lambda_1 &=& \delta^2 \gamma^4 \\
\lambda_2 &=& d \delta^2 \gamma^4 + \delta \gamma^8 \\
\lambda_3 &=& 0 \\
\delta^2 \gamma^{16} + d^2 \delta \gamma^8 + d \delta \gamma^2 + \delta^2 \gamma &=& 0. 
\end{eqnarray*}
We see that if $\gamma = \lambda = 0$, then $(b_2,\sigma)$ admits a lift to $X$ as an automorphism $g$ of order $3$. For a fixed $\gamma$, there are at most $128$ possible choices of $(\gamma,\lambda)$. All of them are obtained by composing $g$ with an element of $G_{0,0,d}$, hence all choices are realized.

Finally, assume that $\delta \neq 1$, $\delta^5 = \alpha = 1$. As in the previous paragraph, 
we have $a = b = c = 0$. But in this case, Equation \eqref{eqn: 5a67} yields the condition $d = 0$. 

So, in summary, if $c \neq 0$, then $G(X)$ is trivial and if $c = 0$, then $\Aut(X)$ admits a unique $2$-Sylow subgroup isomorphic 
to $2_+^{1+6}$. If $a,b$, or $c$ is non-zero, this is the full automorphism group. If $a = b = c = 0$ and $d \neq 0$, then $\Aut(X)/2_+^{1+6} \cong C_3$ and if $a = b = c = d = 0$, then $\Aut(X)/2_+^{1+6} \cong C_{15}$.
\end{proof}

\begin{remark} 
The largest order of an automorphism group of a del Pezzo surface of degree $1$ over the complex numbers is equal to 
$144$ and the surface with such a group of automorphisms is unique \cite{CAG}. In our case, the maximal order is equal 
to $1920 = 2^7\cdot 15$ and the surface with such an automorphism group is also unique. We also see the occurrence of 
the group $G = 2^4$ in Case (5). It is obtained as the pre-image in $2_+^{1+6}$ of a maximal isotropic subspace of 
$\bbF_2^6$. Since del Pezzo surfaces of degree $1$ are superrigid and the corresponding $G$-surface 
is minimal, this group is not conjugate in the Cremona group of $\bbP^2$ to the isomorphic subgroup of the group of automorphisms 
of del Pezzo surfaces of degree $4$ or $2$ that appeared in \cite{DD} and \cite{DM}. 
\end{remark} 

\subsection{Conjugacy classes and comparison with the classification in characteristic $0$}
In this section, we determine the conjugacy classes in $W(\sfE_8)$ of the elements of the groups that occur in Theorem \ref{thm: main1} and, whenever possible, compare the surfaces in Theorem \ref{thm: main1} with their counterparts in characteristic $0$ (see \cite[Table 8.14]{CAG}).

For a del Pezzo surface $X$ of degree $1$, we denoted by $K_X$ and $P_X$ the kernel and image of the morphism $\Aut(X) \to \Aut(\mathbb{P}^1)$ induced by the action of $\Aut(X)$ on the base of the associated elliptic pencil. First, we note the following:
\begin{lemma} \label{lem: conjugacyK}
Let $g$ be a non-trivial element of $K_X$. Then, the conjugacy class of $g$ is either $8A_1,4A_2,2D_4(a_1)$, or $E_8(a_8)$.
\end{lemma}
\begin{proof}
Since $g$ acts trivially on the base of the pencil, it cannot preserve any $(-1)$-curve on $X$. Then, the lemma follows from the classification of conjugacy classes in $W(\sfE_8)$ (see e.g. \cite[Table 3]{DM}), by checking which of them fix no $(-1)$-class in $\sfE_8$.
\end{proof}

\begin{corollary} \label{cor: specializationmap}
Let $X$ be a del Pezzo surface of degree $1$ in characteristic $2$. Let $X'$ be a geometric generic fiber of a lift of $X$ to characteristic $0$ and let ${\rm sp}: \Aut(X') \to \Aut(X)$ be the specialization map. Then, the following hold:
\begin{enumerate}
    \item ${\rm sp}$ is injective.
    \item ${\rm sp}$ induces morphisms $K_{X'} \to K_X$ and $P_{X'} \to P_X$.
    \item The kernel $H$ of $P_{X'} \to P_X$ is an elementary $2$-group and if $g$ is an element of $\Aut(X')$ that maps to a non-trivial element of $H$, then the conjugacy class of $g$ is $2D_4(a_1)$.
\end{enumerate}
\end{corollary}
\begin{proof}
Claim (1) follows from $H^0(X,T_X) = 0$. The existence of the morphisms in Claim (2) is clear.

 For Claim (3), recall that ${\rm sp}$ preserves conjugacy classes, so, by Lemma \ref{lem: conjugacyK}, all non-trivial elements of $H$ are represented by elements $g$ of $\Aut(X')$ of conjugacy class $8A_1,4A_2,2D_4(a_1)$, or $E_8(a_8)$. If $g$ is of class $8A_1$, then it is the Bertini involution, hence $g \in K_{X'}$. If $g$ is of class $4A_2$, then it has negative trace on $\sfE_8$, so, by the Lefschetz fixed point formula, it must act trivially on the base of the elliptic pencil. Hence, $g \in K_{X'}$. If $g$ is of class $E_8(a_8)$, then, by what we just proved, $g^2$ and $g^3$ are in $K_{X'}$, hence $g \in K_{X'}$. Thus, $g$ must be of conjugacy class $2D_4(a_1)$. Then, $g^2$ is the Bertini involution, so $H$ is $2$-elementary. 
\end{proof}

By Theorem \ref{thm: main1}, $|\Aut(X)| \leq 36$ or $|\Aut(X)| \in \{128,384,1920\}$, so Types I and II \cite[Table 8.14]{CAG} do not have a reduction modulo $2$ which is a del Pezzo surface.

The surfaces of Type VI, VII, IX, XII, and XV from \cite[Table 8.14]{CAG} admit an automorphism of order $2n$ with $n > 1$ acting faithfully on $\mathbb{P}^1$, which is impossible in characteristic $2$, so by Corollary \ref{cor: specializationmap} they do not have good reduction mod $2$. 

The equation of the surfaces of Type (3) (v) in Theorem \ref{thm: main1} can be rewritten as
$$
y^2 + uv(u-v)y + x^3 + a(u^2 - uv + v^2)^3 + bu^2v^2(u-v)^2
$$
for certain $a,b \in \Bbbk$. This equation makes sense in characteristic $0$, and it is stable under $\mathfrak{S}_3$-action generated by $(u,v,x,y) \mapsto (v,u,x,-y)$ and $(u,v,x,y) \mapsto (u-v,-u,x,-y)$ as well as the $C_3$-action $(u,v,x,y) \mapsto (u,v,\zeta_3 x,y )$, where $\zeta_3$ is a primitive $3$-rd root of unity. Hence, the automorphism group of has order at least $36$, hence it is isomorphic to $6 \times \mathfrak{S}_3$. Thus, surfaces of Type (3) (v) are reductions mod $2$ of the surfaces of Type III from \cite[Table 8.14]{CAG}. In particular, we can read off the conjugacy classes from \cite[Table 8]{DM}.

The equation of Type (5) (iii) makes sense in characteristic $0$, where it is isomorphic to
$$
y^2 + x^3 + u(u^5 + v^5),
$$
which is the equation of Type IV in \cite[Table 8.14]{CAG}.

The equation of the surfaces of Type (3) (ii) in Theorem \ref{thm: main1} can be rewritten as
$$
y^2 + uv(u-v)y + x^3 + c(u^2 - uv + v^2)x^2 + a(u^2 - uv + v^2)^3 + bu^2v^2(u-v)^2
$$
for certain $a,b,c \in \Bbbk$. Similar to the case of Type (3) (v) above, these equations are stable under a $\mathfrak{S}_3$-action, both in characteristic $0$ and in characteristic $2$. 
In characteristic $0$, these equations can be simplified to the normal forms of Type X from \cite[Table 8.14]{CAG}.

The equation of the surfaces of Type (3) (iii) makes sense in characteristic $0$, where it defines a lift of $X$ together with the action of $\Aut(X)$. Both $X$ and the lift admit an automorphism of order $6$ that acts trivially on the base of the elliptic pencil. Hence, these surfaces are reductions mod $2$ of the surfaces of Type XI from \cite[Table 8.14]{CAG}.

The equations of the surfaces of Type (2) (f) (i) in Theorem \ref{thm: main1} define a $1$-dimensional family of surfaces in characteristic $0$ with an action of $C_{10}$. These lifts must be of Type XIII \cite[Table 8.14]{CAG}.

The equations of the surfaces of Type (4) (i) in Theorem \ref{thm: main1} define a $2$-dimensional family of surfaces in characteristic $0$ with an action of $C_6$ that is trivial on the base of the elliptic pencil. Hence, these lifts are of Type XVII \cite[Table 8.14]{CAG}.

Next, consider the equations
$$
y^2 + (aux + bu^3 + cv^3) y + x^3 + (du^4 + euv^3) x + fu^6 + gu^3v^3 + hv^6,
$$
where $a,b,c,d,e,f,g,h$ are parameters. In characteristic $0$, we can simplify this equation to the normal form of Type XVIII from \cite[Table 8.14]{CAG}. In characteristic $2$, these equations cover three of the families of Theorem \ref{thm: main1}: If $a,c \neq 0$, we can simplify the equation to the normal form for Type (1) (a) (ii) which, in turn, specializes to Type (1) (e) (i) for special values of the parameters. If $a = 0$ but $b,c \neq 0$, we can simplify the equation to
$$
y^2 + (u^3 + v^3)y + x^3 + euv^3x + f u^6.
$$
This is an alternative normal form for our surfaces of Type (3) (iv).

Finally, consider the equations
\begin{eqnarray*}
&& y^2 + (a(u+v)x + b(u+v)^3 + c uv(u+v))y + x^3 + (d(u+v)^4 + euv(u+v)^2 + fu^2v^2)x \\
&+ &  (g(u+v)^6 + huv(u+v)^4 + iu^2v^2(u+v)^2 + ju^3v^3).
\end{eqnarray*}
In characteristic $0$, we can simplify this equation to the normal form of Type XX from \cite[Table 8.14]{CAG}. In characteristic $2$, these equations cover four of the families of Theorem \ref{thm: main1}: 

If $a \neq 0$, we can simplify the equation to
\begin{eqnarray*}
y^2 + (u+v)xy + x^3 + c uv x^2 +  du^2v^2 x + g(u+v)^6 + huv(u+v)^4 + iu^2v^2(u+v)^2 + ju^3v^3.
\end{eqnarray*}
If $d,j \neq 0$, we can rescale one of them to $1$ and obtain an alternative normal form for Type (2) (a) (i). If $d \neq 0$ and $j = 0$, we obtain a normal form for Type (2) (e) (i). If $j \neq 0$ and $d = 0$, we obtain an alternative normal form for Type (2) (d) (i). Note that $d = j = 0$ would lead to a singular surface. Since the family of Type (2) (d) (i) occurs as a reduction mod $2$ of certain surfaces of Type XX from \cite[Table 8.14]{CAG}, we call them Type XX'.

If $a = 0$ and $c \neq 0$, we can simplify the equation to
$$
y^2 + (b(u+v)^3 + uv(u+v))y + x^3 + (euv(u+v)^2 + fu^2v^2)x + (g(u+v)^6 + ju^3v^3).
$$
This defines a $4$-dimensional family of surfaces with $2^2$-action (one parameter is redundant). By Theorem \ref{thm: main1}, the corresponding surfaces must be of Type (3) (i).

The surfaces in the families $(1) (a) (i), (1) (a) (iii), (1) (a) (iv), (1) (d) (i), (5) (i), (5) (ii),$ and $(5) (iii)$ admit an automorphism of order $4$ and it turns out that writing down integral equations for such automorphisms similar to the ones above is hard. So, instead, to determine the conjugacy classes of the automorphisms of this family and to compare with the classification in characteristic $0$, we will use the following observation.

\begin{lemma} \label{lem: conjugacyclasses}
Let $g$ be an automorphism of a del Pezzo surface of degree $1$. Let $m \coloneqq {\rm ord}(g)$ and let $n$ be the order of the induced automorphism of $\mathbb{P}^1$. Assume that $m$ is even. Then, the conjugacy class $\Gamma$ of $g$ in $W(\sfE_8)$ is one of the following:
\begin{enumerate}
    \item If $(m,n) = (2,1)$, then $\Gamma = 8A_1$.
    \item If $(m,n) = (2,2)$, then $\Gamma = 4A_1$.
    \item If $m = 4$, then $\Gamma = 2D_4(a_1)$.
    \item If $(m,n) = (6,1)$, then $\Gamma = E_8(a_8)$.
    \item If $(m,n) = (6,2)$, then $\Gamma = E_6(a_2) + A_2$.
    \item If $(m,n) = (6,3)$ and $g^2$ is of class $3A_2$, then $\Gamma = E_7(a_4) + A_1$.
    \item If $(m,n) = (6,3)$ and $g^2$ is of class $2A_2$, then $\Gamma = 2D_4$.
    \item If $m = 10$, then $\Gamma = E_8(a_6)$.
    \item If $m = 12$, then $\Gamma = E_8(a_3)$.
    \item If $m = 20$, then $\Gamma = E_8(a_2)$.
    \item If $m = 30$, then $\Gamma = E_8$. 
\end{enumerate}
\end{lemma}
\begin{proof}
By Theorem \ref{thm: main1}, we know that the only possible values for $m$ and $n$ are the ones in the statement.

In Case (1), $g$ is the Bertini involution, hence $\Gamma = 8A_1$. In Case (2), we may assume that $g$ acts as $u \leftrightarrow v$. Then, we proved in this section that $g$ lifts to characteristic $0$, so by \cite[Table 8]{DM}, $\Gamma = 4A_1$. In Case (3), $g^2$ is the Bertini involution, because $\PGL_2(\Bbbk)$ does not contain elements of order $4$, and it is known (see \cite[Table 3]{DM}) that the only conjugacy class of automorphisms of order $4$ whose square is the Bertini involution is $2D_4(a_1)$. In Cases (8) and (11), $g^{\frac{m}{2}}$ is the Bertini involution and $g^2$ lifts to characteristic $0$, hence $g$ lifts to characteristic $0$ and we can read off the conjugacy class $\Gamma$ from \cite[Table 8]{DM}. Then, we deduce Case (10) from Case (8). In Case (9), $g^2$ must be of type $E_8(a_8)$, since $\PGL_2(\Bbbk)$ does not contain any elements of order $4$ or $6$. Then, from \cite[Table 3]{DM}, we see that $\Gamma = E_8(a_3)$. Finally, Cases (4), (5), (6), and (7) follow from \cite[Table 3]{DM} by comparing the conjugacy classes of $g^2$ and $g^3$.
\end{proof}

Now, we can complete Table \ref{tbl:autodp1} by using the description of $\Aut(X)$ in Theorem \ref{thm: main1}. We observe that the conjugacy classes for Types $(1) (a) (i)$ and $(1) (d) (i)$ are the same as for Type XIX from \cite[Table 8.14]{CAG}, the conjugacy classes for Type $(1) (a) (iii)$ are the same as for Type XIV from \cite[Table 8.14]{CAG}, and the conjugacy classes for Type $(1) (a) (iv)$ are the same as for Type V from \cite[Table 8.14]{CAG}. The only groups in Theorem \ref{thm: main1} that contain $D_8$ are $2_+^{1+6},2_+^{1+6} : 3$, and $2_+^{1+6} : 15$, and the only group that contains an automorphism of order $20$ is $2_+^{1+6} : 15$. Hence, if the Types XVI, M, and VIII from \cite[Table 8.14]{CAG} and \cite[Table 8]{DM} have good reduction modulo $2$, then they must reduce to our Types $(5)(i)$ and $(5)(ii)$, respectively. In each of these cases, we determine the conjugacy classes using Lemma \ref{lem: conjugacyclasses}.

We summarize the classification of automorphism groups of del Pezzo surfaces of degree $1$ in Table \ref{tbl:autodp2} in the Appendix. There, in the first column, we give the name of the corresponding family, both in the notation of Theorem \ref{thm: main2} and in the notation of \cite[Table 8.14]{CAG}. The second and third columns give the group $\Aut(X)$ and its size. In the remaining columns, we list the number of elements of a given Carter conjugacy class in $\Aut(X)$.

\newpage 
\begin{landscape}
\section*{Appendix}
\label{sec:appendix}

\vskip5pt
\begin{table}[htbp]
\centering
\renewcommand{\arraystretch}{1.3}
\scalebox{0.8}{%
\begin{tabular}{|c|cc|ccccccccccc|}
\hline
Name &$\Aut(X)$ & Order & ${\rm id}$ &  $2A_1$ & $4A_1$ & $A_2$& $A_2 + 2A_1$ &$A_3$  & $A_3 + A_1$ & $A_4$ & $D_4$ & $D_4(a_1)$ &$D_5$ \\
\hline \hline
$(\phi,\phi)$ & $2^4 : \frakA_5$ & $960$ & $1$  & $70$   & $5$ & $80$ & $80$ & & $120$ & $384$ & $160$ & $60$ & \\ \hline 
$(\zeta_3,\zeta_3)$ & \multicolumn{2}{c|}{\cellcolor{gray!20}(same as $(\phi,\phi)$)} & \multicolumn{11}{c|}{\cellcolor{gray!20}} \\ \hline
$(i,i)$ & \multicolumn{2}{c|}{\cellcolor{gray!20}(does not exist)} & \multicolumn{11}{c|}{\cellcolor{gray!20}} \\ \hline
$(a,a)$  & $2^4 : 2^2$ & $64$ &  $1$   & $22$ & $5$ & & & & $24$  & & & $12$&  \\ \hline
general  & $2^4$ & $16$ & $1$  & $10$   & $5$ &  & & & & & & & \\ \hline 
\end{tabular}
}
\caption{Automorphism groups of quartic del Pezzo surfaces}
\label{tbl:autodp4}
\end{table}

\begin{table}[htbp]
\centering
\renewcommand{\arraystretch}{1.3}
\scalebox{0.8}{%
\begin{tabular}{|c|cc|cccccccccccccccc|}
\hline
Name & $\Aut(X)$ & Order & ${\rm id}$  & $2A_1$ & $4A_1$ & $A_2$
& $A_2 + 2A_1$ &$2A_2$ & $3A_2$ & $A_3 + A_1$ & $A_4$ & $A_5 + A_1$ & $D_4$ & $D_4(a_1)$  & $D_5$ & $E_6 $ & $E_6(a_1)$ & $E_6(a_2)$ \\ \hline
\hline
I / 3C & $\PSU_4(2)$ & $25920$ & $1$ & $270$ &$45$  & $240 $ & $2160 $& $480$ & $80$ & $3240 $& $5184$ & $1440$ & $1440 $& $540$ &  & $4320$  &$5760 $ & $720$ \\
\hline
II / 5A &\multicolumn{2}{c|}{\cellcolor{gray!20}(same as V)} & \multicolumn{16}{c|}{\cellcolor{gray!20}} \\ \hline
III / 12A &  \multicolumn{2}{c|}{\cellcolor{gray!20}(same as I)} & \multicolumn{16}{c|}{\cellcolor{gray!20}} \\ \hline
IV /  3A &  $\calH_3(3) : 2$ & $54$ & $1$ &  & $9$  &   & & $24$ & $2$&  &  &  &  & &  &     &  & $18$ \\
\hline
V / 4B &$2^3 : \frak{S}_4$ & $192$ & $1$ & $30$  & $13$ &  &  & $32$ & & $72$ &  & $32$ &  & $12$ &  &     &  & \\ \hline
VI / 6E & \multicolumn{2}{c|}{\cellcolor{gray!20}(same as V)} & \multicolumn{16}{c|}{\cellcolor{gray!20}} \\ \hline
VII / 8A & \multicolumn{2}{c|}{\cellcolor{gray!20}(does not exist)} & \multicolumn{16}{c|}{\cellcolor{gray!20}} \\ \hline
 VIII / 3D &  $\frakS_3$ & $6$ & $1$ &  & $3$ &  & & $2$ &  &  &  &  &  &  &  &     &  & \\
\hline
IX / 4A & \multicolumn{2}{c|}{\cellcolor{gray!20}(same as V)} & \multicolumn{16}{c|}{\cellcolor{gray!20}} \\ \hline
X / 2B & $2^4$ & $16$ &$ 1$ & $10 $& $5 $&  &  &  &  &  &  &  &  &  &  &  &  & \\
\hline
XI / 2A &$2$ &$ 2 $& $1$ &  & $1$ &  &  &  &  &  &  &  &  &  &  &  &  & \\
\hline
XII / 1A &  $1$ &$ 1 $&$ 1$ &  &  &  &  &  &  &  &  &  &  &  &  &  &  & \\
\hline
\end{tabular}
}
\caption{Automorphism groups of cubic del Pezzo surfaces}
\label{tbl:cubics}
\end{table}
\end{landscape}

\newgeometry{left=3cm,right=3.4cm,bottom=0.5cm,top=1.6cm}

\begin{landscape}
\begin{table}[htbp]
\centering
\renewcommand{\arraystretch}{1.3}
\scalebox{0.7}{%
\begin{tabular}{|l|cc|cccccccccccccccccccccc|}
\hline
Name &  $\Aut(X)$ & Order & ${\rm id}$  & $3A_1$ & $4A_1$ & $7A_1$ & $2A_2$ & $3A_2$ &  $2A_3$ & $2A_3 + A_1$ & $A_5 + A_2$ & $A_6$ & $D_4(a_1)$ & $D_4(a_1) + A_1$ & $D_5$ & $D_5 + A_1$ & $D_6(a_2) + A_1$ & $E_6$ &$E_6(a_1)$ &$E_6(a_2)$ & $E_7$ & $E_7(a_1)$ & $E_7(a_2)$ &$E_7(a_4)$ \\
\hline \hline
I - V &  \multicolumn{2}{c|}{\cellcolor{gray!20}(do not exist)} & \multicolumn{22}{c|}{\cellcolor{gray!20}} \\ \hline
VI / $(3)(ii)$ &  $18$ & $18$ & $1$ & & & $1$ & & $2 $& & & & & & && &&& $6$ & & $6$&  &&  $2$ \\ \hline 
VII &  \multicolumn{2}{c|}{\cellcolor{gray!20}(does not exist)} & \multicolumn{22}{c|}{\cellcolor{gray!20}} \\ \hline
VIII / $(2)(a)(iii)$& $2 \times 6$ &$ 12 $& $1$ & $1 $&$ 1 $& $1 $& & $2 $& & & $2$ & && & & && && $ 2$ && &&  $2$ \\ \hline 
IX / $(1)(a)(ii)$ &  $2 \times \frakS_3$ & $12$ & $1$ & $3$ & $3$ & $1 $& $2$ & & & & & &&& & & $2$ && && & &&\\ \hline 
X / $(1)(c)(i)$ & $2^4$ & $16$&$ 1$ &$ 7$ &$ 7$ &$ 1$ & & & & & & &&&&& & & & && &&\\ \hline 
XI / $(2)(a)(ii), (3)(i)$ &$6$ & $6$ & $1$ & & & $1$ & & $2 $ & &&&  &&& & && & & & & &&$ 2 $\\ \hline 
XII / $(1)(a)(i), (2)(a)(i)$&  $2^2$ & $4$ & $1 $&$ 1$ &$ 1 $&$ 1$ & & & &&&&& & & && & & & && &\\ \hline 
XIII & $2$ &$ 2$ &$ 1 $& & & $1 $& &  &  && & &  && & & & && & && &\\ \hline 
\end{tabular}
}
\caption{Automorphism groups of del Pezzo surfaces of degree $2$}
\label{tbl:autodp2}
\end{table}

\begin{table}[htbp]
\centering
\renewcommand{\arraystretch}{1.3}
\scalebox{0.53}{%
\begin{tabular}{|l|cc|cccccccccccccccccccccccccc|}
\hline
Name  & $\Aut(X)$ & Order & ${\rm id}$  & $4A_1$ & $8A_1$ & $2A_2$  & $3A_2$& $4A_2$ & $2A_3 + A_1$ & $2A_4$ & $A_5 + A_2 + A_1$ & $D_4 + A_2$& $2D_4$ &$D_4(a_1) + A_1$  & $2D_4(a_1)$ & $D_8(a_3)$ & $E_6 + A_1$ & $E_6(a_2)$& $E_6(a_2) + A_2$ & $E_7(a_2)$ &$E_7(a_4) + A_1$ & $E_8$ & $E_8(a_1)$ &$E_8(a_2)$  & $E_8(a_3)$ & $E_8(a_5)$ &$E_8(a_6)$ &$E_8(a_8)$ \\
\hline \hline
I - II& \multicolumn{2}{c|}{\cellcolor{gray!20}(do not exist)} & \multicolumn{26}{c|}{\cellcolor{gray!20}} \\ \hline 

III / $(3) (v)$ &  $6 \times D_6$& $36 $&$ 1$ & $6 $&$ 1$ & $2$ & $4$ & $2$ & & & & & $2$ &  & & & & & $12 $& & $4$  & & & & & & & $2$ \\ \hline 

IV / $(5) (iii)$ &  $2_+^{1+6} : 15$ & $1920$ &$ 1$ & $70$ & $1$ & &  & $8$ & & $64$ & & & & & $56$ &  & & & $80$ & & & $512$ & & $384$ & $160$ & $512$ & $64$ & $8$\\   \hline 

M / $(5)(ii)$  & $2_+^{1+6} : 3$ & $384$ &$ 1$ & $70$ & $1$ & &  & $8$ & & & & & & & $56$ &  & & & $80$ & & & & & & $160$ & & & $8$\\   \hline 

V / $(1)(a)(iv)$ & $\SL_2(3)$& $24$ & $1 $& & $1 $& &$ 8$ & & & & & & & & $6$ & & & & & & $8$ & & & & & & & \\ \hline 

VI - VII & \multicolumn{2}{c|}{\cellcolor{gray!20}(do not exist)} & \multicolumn{26}{c|}{\cellcolor{gray!20}} \\ \hline 

VIII & \multicolumn{2}{c|}{\cellcolor{gray!20}(same as IV)} & \multicolumn{26}{c|}{\cellcolor{gray!20}} \\ \hline

IX & \multicolumn{2}{c|}{\cellcolor{gray!20}(does not exist)} & \multicolumn{26}{c|}{\cellcolor{gray!20}} \\ \hline 

X / $(3) (ii)$ &  $D_{12}$&$ 12 $& $1$ &$ 6$ & $1$ & $2$ & & & & & & & $2$ &  & & & & & & &  & & & & & & & \\ \hline 

XI / $(3) (iii)$ &$2 \times 6$& $12 $& $1 $& $2$ & $1 $& & &$ 2$ & & & & & & & & & & & $4$ & & & & & & & & & $2 $\\ \hline 

XII & \multicolumn{2}{c|}{\cellcolor{gray!20}(does not exist)} & \multicolumn{26}{c|}{\cellcolor{gray!20}} \\ \hline 

XIII / $(2) (f) (i)$  & $10$&$ 10$ & $1$ & & $1$ & & & & & $4$& & & & & & & & & & & & & & & & & $4$ & \\ \hline 

XIV / $(1) (a) (iii)$& $Q_8$& $8$ & $1$ & & $1$ & & & & & & & & & & $6$ & & & & & & & & & & & & & \\ \hline 
 
XV & \multicolumn{2}{c|}{\cellcolor{gray!20}(does not exist)} & \multicolumn{26}{c|}{\cellcolor{gray!20}} \\ \hline 

XVI / $(5)(i)$ & $2_+^{1+6}$&$ 128 $& $1$& $70$ & $1 $& & & & & & & & & & $56$& & & & & & & & & &  & & & \\ \hline 

XVII / $(4)(i)$ & $6$& $6$ &$ 1$ &  & $1$ & & & $2$ & & & & & & & & & & & & & & & & & & & & $2$ \\ \hline 

XVIII / $(1)(a)(ii),(1)(e)(i),(3)(iv)$ &  $6$&$ 6 $& $1 $&  & $1 $& & $2$ & & & & & & & & & & & & & & $2 $& & & & & & & \\ \hline 

XIX / $(1)(a)(i),(1)(d)(i)$&  $4$& $4 $& $1$& & $1$ & & & & & & & &  & & $2$ & & & & & & & & & & & && \\ \hline 

XX / $(2)(a)(i),(2)(e)(i),(3)(i)$ &  $2^2$ & $4$ & $1$ & $2$ & $1$ & & & & & & & & & & & & & & & & & & & & & & &  \\ \hline 

XX' / $(2)(d)(i)$ &  $2^4$ & $16$ & $1$ & $14$ & $1$ & & & & & & & & & & & & & & & & & & & & & & &  \\ \hline 

XXI &  $2$ &$ 2 $&$ 1 $& &$1 $& & &  & & & & & & & & & & & & & & & & & & & & \\ \hline

\end{tabular}
}
\caption{Automorphism groups of del Pezzo surfaces of degree $1$}
\label{tbl:autodp1}
\end{table}
\end{landscape}

\restoregeometry

\end{document}